\tikzset{
	commutative diagrams/.cd, 
	arrow style=tikz, 
	diagrams={>=stealth}
}
\theoremstyle{definition}
\def\@tocline#1#2#3#4#5#6#7{\relax
	\ifnum #1>\c@tocdepth 
	\else
	\par \addpenalty\@secpenalty\addvspace{#2}%
	\begingroup \hyphenpenalty\@M
	\@ifempty{#4}{%
		\@tempdima\csname r@tocindent\number#1\endcsname\relax
	}{%
		\@tempdima#4\relax
	}%
	\parindent\z@ \leftskip#3\relax \advance\leftskip\@tempdima\relax
	\rightskip\@pnumwidth plus4em \parfillskip-\@pnumwidth
	#5\leavevmode\hskip-\@tempdima
	\ifcase #1
	\or\or \hskip 1em \or \hskip 2em \else \hskip 3em \fi%
	#6\nobreak\relax
	\dotfill\hbox to\@pnumwidth{\@tocpagenum{#7}}\par
	\nobreak
	\endgroup
	\fi}
\newcounter{marginnote}
\DeclareMathAlphabet{\mathpzc}{OT1}{pzc}{m}{it}
\theoremstyle{definition}
\newtheorem{theorem}{Theorem}[section]
\newtheorem{computation}[theorem]{Computation}
\newtheorem{lemma}[theorem]{Lemma}
\newtheorem{proposition}[theorem]{Proposition}
\newtheorem{remark}[theorem]{Remark}
\newtheorem*{runningexample*}{Running example}
\newtheorem*{aside*}{Aside}
\newtheorem{constr}[theorem]{Construction}
\newtheorem{definition}[theorem]{Definition}
\newtheorem{example}[theorem]{Example}
\newtheorem{proposition-definition}[theorem]{Proposition-Definition}
\newenvironment{construction}    
{%
	\pushQED{\qed}\begin{constr}}
	{\popQED\end{constr}}
\newcommand{\PTnXbeta}{\mathrm{\mathcal{PT}}_n}
\newcommand{\UM}{\mathcal{X}}
\newcommand{\M}{\mathcal{PT}_0}
\newcommand{\VM}{{PT}_0(X,\beta)}
\newcommand{\VTM}{|\Delta|}
\newcommand{\VUM}{{X}_{\mathrm{uni}}}
\newcommand{\VTUM}{{X}^{\mathfrak{t}}_{\mathrm{uni}}}
\newcommand{\Sp}{\mathscr{P}_\Delta}
\newcommand{\VSp}{\mathscr{P}_\Delta}
\newcommand{\TSp}{\mathscr{P}_\Delta^\mathfrak{t}}
\newcommand{\R}{\mathscr{X}}
\newcommand{\TR}{\mathscr{X}^{\mathfrak{t}}}
\newcommand{\TPN}{(\mathbb{P}^m)^{\mathfrak{t}}}
\newcommand{\TX}{{X}^{\mathfrak{t}}}
\newcommand{\B}{{\mathcal{P}}}
\newcommand{\VB}{{{P}}}
\newcommand{\TB}{{\B}^{\mathfrak{t}}}
\newcommand{\VTB}{{\VB}^{\mathfrak{t}}}
\newcommand{\HH}{N}
\newcommand{\RH}{\mathscr{N}}
\DeclareRobustCommand{\stirling}{\genfrac\{\}{0pt}{}}
\newcommand{\bcd}{\begin{center}\begin{tikzcd}}
	\newcommand{\ecd}{\end{tikzcd}\end{center}}
\newcommand{\vardbtilde}[1]{\tilde{\raisebox{0pt}[0.85\height]{$\tilde{#1}$}}}
\NewDocumentCommand{\compatibilitydatum}{m m m m m m O{} O{} O{}}{
	\begin{equation\star} \begin{tikzcd}[ampersand replacement=\&]
		\: \arrow{r} \& {#1} \arrow{r} \arrow{d}{#7} \& {#2} \arrow{r} \arrow{d}{#8} \& {#3} \arrow{r}{[1]} \arrow{d}{#9} \& \: \\
		\: \arrow{r} \& {#4} \arrow{r} \& {#5} \arrow{r} \& {#6} \arrow{r} \& \:
		\end{tikzcd} \end{equation\star}}
\NewDocumentCommand{\commutingsquare}{m m m m o O{} O{} O{} O{}}{
	\begin{equation}\begin{tikzcd}[ampersand replacement=\&] \label{#5}
	#1 \arrow{r}{#6} \arrow{d}{#7} \& #2 \arrow{d}{#8} \\
	#3 \arrow{r}{#9} \& #4
	\end{tikzcd}\IfValueTF{#5}{\label{#5}}{} \end{equation}}
\NewDocumentCommand{\cartesiansquare}{m m m m O{} O{} O{} O{}}{
	\begin{equation\star}\begin{tikzcd}[ampersand replacement=\&]
		#1 \arrow{r}{#5} \arrow{d}{#6} \arrow[dr, phantom, "\square"] \& #2 \arrow{d}{#7} \\
		#3 \arrow{r}{#8} \& #4
		\end{tikzcd} \end{equation\star}}
\NewDocumentCommand{\cartesiansquarelabel}{m m m m m O{} O{} O{} O{}}{
	\begin{tikzcd}[ampersand replacement=\&]
	#1 \arrow{r}{#6} \arrow{d}{#7} \arrow[dr, phantom, "\square"] \& #2 \arrow{d}{#8} \\
	#3 \arrow{r}{#9} \& #4
	\end{tikzcd}\IfValueTF{#5}{\label{#5}}{}
}
\NewDocumentCommand{\triangleofspaces}{m m m O{} O{} O{}}{
	\begin{tikzcd} [ampersand replacement=\&]
	#1 \arrow{r}{#4} \arrow[bend right]{rr}{#5} \& #2 \arrow{r}{#6} \& #3
	\end{tikzcd}}
\begin{document}
	
	\title{Logarithmic Pandharipande--Thomas Spaces and the Secondary Polytope}
	\author{Patrick Kennedy--Hunt}
	
	\begin{abstract}
		Maulik and Ranganathan have recently introduced moduli spaces of logarithmic stable pairs. In the case of toric surfaces we recast this theory using three ingredients: Gelfand, Kapranov and Zelevinsky secondary polytopes, Hilbert schemes of points, and tautological vector bundles. In particular, logarithmic stable pairs spaces are expressed as the zero set of an explicit section of a vector bundle on a logarithmically smooth space, thus providing an explicit description of their virtual fundamental class. A key feature of our construction is that moduli spaces are completely canonical, unlike the existing construction, which is only well-defined up to logarithmic modifications. We calculate the Euler--Satake characteristics of our moduli spaces in a number of basic examples.  These computations indicate the complexity of the spaces we construct.
	\end{abstract}
	
	\maketitle
	\tableofcontents
	\pagebreak
	
	\pagebreak
	\section*{Introduction}
	
	Let $\Delta$ be a two dimensional lattice polytope in $M_X=\mathbb{Z}^2$. This datum determines a proper toric surface $X$ equipped with an ample curve class $\beta$. We consider $X$ as a logarithmic scheme equipped with divisorial logarithmic structure from its toric boundary. In Section \ref{sec:one} we construct the \textit{logarithmic linear system} which is a toric stack $\M(X,\beta)$.  There is a natural birational toric morphism to the linear system $|\beta|$, $$\M(X,\beta)\xrightarrow{\pi_{|\beta|}} |\beta|.$$ The logarithmic linear system is a moduli space of curves on \textit{expansions} of $X$ and is closely related to two constructions:
	\begin{enumerate}[(1)]
		\item The secondary fan associated to $\Delta$. This is the normal fan to the secondary polytope introduced by Gelfand, Kapranov and Zelevinsky \cite{GKZ}.
		\item Logarithmic Donaldson--Thomas spaces introduced by Maulik and Ranganathan in \cite{MR20}.
	\end{enumerate}
	\noindent The spaces in \cite{MR20} are only well--defined up to a class of (virtual) birational modifications. Our spaces are canonical and, as we explain below, are a terminal object in the system of birational models discussed in \cite[Section 3]{MR20}. 
	
	The Pandharipande--Thomas theory associated to a variety $X$, integer $n$ and curve class $\beta$ concerns the moduli space of \textit{stable pairs} on $X$ with discrete data $(n,\beta)$\cite{PT,PT2}. If $X$ is a surface, the moduli space of stable pairs coincides with the relative Hilbert scheme of points $\mathrm{Hilb}^n(\textbf{C}/|\beta|)$. Here $\textbf{C} \rightarrow |\beta|$ is the universal curve over the linear system $|\beta|$. Bootstrapping our construction of the logarithmic linear system, we obtain canonical logarithmic Pandharipande--Thomas moduli spaces for $X$ a toric surface. We denote these logarithmic analogues of $\mathrm{Hilb}^n(\textbf{C}/|\beta|)$ by $\PTnXbeta(X,\beta)$. In the sequel we typically suppress the data of $X$ and $\beta$, writing $\PTnXbeta(X,\beta)=\PTnXbeta$.
	
	\subsection{Main Results}
	There are three steps in our construction of logarithmic Pandharipande--Thomas spaces: \begin{enumerate}[(1)]
		\item Build a torus equivariant diagram of toric stacks from a diagram of fans.
		\item Pass to the relative Hilbert scheme of points and restrict attention to an open subset.
		\item Pass to a closed subscheme cut out by a section of a tautological vector bundle.
	\end{enumerate} 
	In related work, Maulik and Ranganathan define the logarithmic Pandharipande--Thomas moduli space for a curve class on a threefold \cite[Remark 4.5.2]{MR20}.
	\begin{theorem}\label{thm:main}
		The moduli space $\PTnXbeta$ is a proper, logarithmic Deligne--Mumford stack equipped with a universal diagram:
		$$
		\begin{tikzcd}
		& \UM_n \arrow[d, "\varpi_n"] \arrow[r, "\pi_X"] & X                        \\
		& \PTnXbeta \arrow[r, "\mathrm{ev}"']           & \mathrm{Hilb}^{\mathrm{log}}_\beta(\partial X)
		\end{tikzcd}.
		$$
		The fibers of the map $\varpi_n$ are logarithmically smooth surfaces equipped with a map to $X$. There is a universal stable pair on $\mathcal{X}_n$ denoted $$[\mathcal{O}_X \xrightarrow{s} F],$$ we identify $\mathrm{ker}(s)$ with the ideal sheaf of a subscheme $\mathcal{Z}_n$ of $\mathcal{X}_n$. The universal subscheme $\mathcal{Z}_n$ is a curve strongly transverse to the boundary of $\mathcal{X}_n$ in the sense of \cite[Section 0.2]{MR20}. Let $\Delta^o$ be the set of lattice points in the interior of $\Delta$ then $\varpi_n^{-1}(p)\cap \mathcal{Z}_n$ has holomorphic Euler characteristic $2-2|\Delta^o|+n$. The moduli space $\M$ is a toric stack.
	\end{theorem}

	We write $\mathcal{Z}_0 =\mathcal{Z}$ and $\varpi_0 = \varpi$. Fixing a point $x$ in $\M$ the fibre $X_{\Gamma_x}=\varpi^{-1}(x)$ is a logarithmically smooth surface and $$\mathcal{Z}_x~=~\varpi^{-1}~(x)~\cap~\mathcal{Z}$$ a curve on this surface. The intersection of $\mathcal{Z}_x$ with the logarithmic boundary of $X_{\Gamma_x}$ satisfies a transversality condition. The morphism $\mathrm{ev}$ records the intersection of $\mathcal{Z}_x$ with the boundary of $X_{\Gamma_x}$.
	\subsection{Topology of stable pairs spaces}
	Euler--Satake characteristics are a version of Euler characteristic for orbifolds defined independently in \cite{MR95520,MR1435975}. The Euler--Satake characteristic of a toric stack is a count with multiplicity of maximal cones in the associated fan. We understand the Euler--Satake characteristic of logarithmic stable pairs spaces as a weighted count of cones in their tropicalisation. The weight assigned to each cone is a linear combination of Euler--Satake characteristics for relative Hilbert schemes of points without boundary. 
	
	In the case $X = \mathbb{P}^1\times \mathbb{P}^1$ and for the $(1,d)$ curve class we compute the Euler--Satake characteristic. The results for small values of $n$ and $d$ are recorded in Table \ref{tab:P1P1EulerChar}. To provide an explicit formula requires a great deal of notation so a statement is deferred to Theorem \ref{thm:EulerChar}.
	
	\begin{table}[h]
		\begin{tabular}{@{}c|cccc@{}}
			\diagbox[width=2.5em]{n}{$\beta$}&  $(1,1)$  & $(1,2)$ & (1,3) & (1,4) \\ \midrule
			0	& 20 & 20 & 456 & 32 \\
			1	& 96  & 224  & 9524 & 56576 \\
			2	& 400  & 1155  & 52023 & -865699 \\
			3   & $1218\frac{2}{3}$  & $7982\frac{2}{3}$  & 313644$\frac{1}{3}$ &  -9379411$\frac{1}{3}$
		\end{tabular}
		\caption{}\label{tab:P1P1EulerChar}
	\end{table}
	Euler characteristics of Pandharipande--Thomas spaces are closely related to the Gromov--Witten and Pandharipande--Thomas invariants of surfaces \cite{Kool_2014,MR3238155}. In particular, Euler characteristics of Pandharipande--Thomas spaces capture information about the locus of points in the linear system corresponding to curves with a fixed number of nodes \cite{KoolShendeThomas}. The closure of the locus of curves with $d$ nodes is denoted $S_d$ and called a \textit{Severi variety}. Fixing the degree and number of nodes in a curve fixes the geometric genus. Understanding Euler--Satake characteristics of logarithmic Pandharipande--Thomas spaces thus provides enumerative information. 
	
	\subsection{Intersection theory}
	The simplest logarithmic stable pairs space associated to $X$ is the logarithmic linear system $\M$. Recall $\M$ is a toric stack. Intersection theory of toric varieties has a clean formalism in the language of Minkowski weights due to work of Fulton and Sturmfels \cite{IntToricVar}. We exploit this in Section \ref{sec:computations} to compute enumerative invariants. These enumerative invariants are integrals of insertions over the fundamental class of the logarithmic linear system. Insertions in logarithmic Pandharipande--Thomas theory are recalled in Section \ref{sec:two:ptinsert}. Denote the class in the Chow cohomology ring $A^\star(|\beta|)$ dual to a hyperplane by $[H]$.  
	\begin{proposition}\label{thm:identifyminkwt}
		The insertion $\tau_0([pt])$ coincides with $\pi_{|\beta|}^\star([H])$.
	\end{proposition}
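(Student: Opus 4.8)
The plan is to reduce the statement to the classical fact that the curves in $|\beta|$ passing through a fixed point sweep out a hyperplane. First I would recall from Section~\ref{sec:two:ptinsert} that for $n=0$ there are no marked points, so the universal one-dimensional sheaf is the structure sheaf $\mathcal{O}_{\mathcal{Z}}$ of the universal curve and the point insertion is
\[
\tau_0([pt]) \;=\; \varpi_*\!\left(\pi_X^*[pt]\cdot \op{ch}_1(\mathcal{O}_{\mathcal{Z}})\right)\;=\;\varpi_*\!\left(\pi_X^*[pt]\cdot[\mathcal{Z}]\right),
\]
using $\op{ch}_1(\mathcal{O}_{\mathcal{Z}})=[\mathcal{Z}]$; the degrees match since $\pi_X^*[pt]\in A^2(\UM)$, $[\mathcal{Z}]\in A^1(\UM)$ and $\varpi$ has relative dimension two, so the output is a divisor class. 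The key choice will be to represent $[pt]$ by a point $p$ in the \emph{interior} of $X$, away from $\partial X$, so that the nontrivial geometry of the expansions never interferes.

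Next I would use that expansions modify $X$ only along its toric boundary, so $\pi_X$ is an isomorphism over a neighbourhood of the interior point $p$. Then $\pi_X^{-1}(p)$ defines a section $\sigma\colon \M\to\UM$ of $\varpi$ with $\pi_X\circ\sigma\equiv p$, and $\pi_X^*[pt]=\sigma_*[\M]$. The projection formula for $\sigma$ together with $\varpi\circ\sigma=\op{id}$ would then collapse the pushforward:
\[
\tau_0([pt])\;=\;\varpi_*\!\left(\sigma_*[\M]\cdot[\mathcal{Z}]\right)\;=\;\varpi_*\sigma_*\!\left(\sigma^*[\mathcal{Z}]\right)\;=\;\sigma^*[\mathcal{Z}].
\]
Thus $\tau_0([pt])$ becomes the class of the locus of $x\in\M$ whose fibre curve $\mathcal{Z}_x$ passes through $p$, which is exactly the expected geometric meaning.

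To identify this with $\pi_{\mathbb{P}^m}^*[H]$ I would compare the two universal curves. Let $\mathcal{C}\subset X\times\mathbb{P}^m$ be the universal curve over the linear system with projections $p_X, q$, satisfying $\mathcal{O}(\mathcal{C})\cong p_X^*L\otimes q^*\mathcal{O}(1)$ where $c_1(L)=\beta$, and form $\Phi=(\pi_X,\pi_{\mathbb{P}^m}\circ\varpi)\colon \UM\to X\times\mathbb{P}^m$. Over the interior point the expansion is trivial and $\Phi$ is an isomorphism identifying $\mathcal{Z}$ with $\Phi^{-1}(\mathcal{C})$; moreover $\Phi\circ\sigma=\sigma_p\circ\pi_{\mathbb{P}^m}$ for the slice $\sigma_p\colon\{p\}\times\mathbb{P}^m\hookrightarrow X\times\mathbb{P}^m$. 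Hence
\[
\sigma^*[\mathcal{Z}]\;=\;(\Phi\circ\sigma)^*[\mathcal{C}]\;=\;\pi_{\mathbb{P}^m}^*\,\sigma_p^*[\mathcal{C}],
\]
and restricting $\mathcal{O}(\mathcal{C})=p_X^*L\otimes q^*\mathcal{O}(1)$ to $\{p\}\times\mathbb{P}^m$ trivialises the first factor and leaves $\mathcal{O}(1)$, giving $\sigma_p^*[\mathcal{C}]=[H]$ and therefore $\tau_0([pt])=\pi_{\mathbb{P}^m}^*[H]$.

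The main obstacle I anticipate is justifying the identification $\mathcal{Z}=\Phi^{-1}(\mathcal{C})$ near the interior section --- equivalently, that the universal curve over the expansions restricts, away from $\partial X$, to the curve parametrised by $\pi_{\mathbb{P}^m}(x)$ with no broken components. This is precisely the content of the construction of $\M$ in Section~\ref{sec:one}: expansions are supported over the toric boundary, so an interior point sees only the original surface. Once this is in hand, the boundary of $\M$ contributes nothing and the chain of equalities holds throughout $A^1(\M)$ rather than merely on the dense torus orbit; checking that the refined Gysin pullback $\pi_X^*[pt]$ really is the section class $\sigma_*[\M]$, via flatness of $\pi_X$ over the regular value $p$, is the only other point requiring care.
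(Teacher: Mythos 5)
Your proposal is correct in substance but takes a genuinely different route from the paper. The paper never leaves the Minkowski-weight formalism: Lemma \ref{lem:cup} computes the weights of $c_1(\mathcal{J}_Z)$, $\pi_X^\star([\mathsf{pt}])$ and their cup product on the fan of $X_{\mathsf{uni}}$, Proposition \ref{prop:cuprod} evaluates $F_{\mathcal{PT}_0(X,\beta)\star}(\tau_0([pt])B)$ by the fan displacement rule as a sum of lattice indices over good triples, and Theorem \ref{thm:auxtominkwt} concludes by a bijection between those good triples and the ones computing $c\cap V(\sigma)$ for the explicit weight $c$ representing $\pi_{\mathbb{P}^m}^\star([H])$. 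You instead localise $[\mathsf{pt}]$ at a point $p$ of the dense torus, produce the section $\sigma=\pi_X^{-1}(p)$ of $\varpi$, collapse the pushforward by the projection formula, and compare with the classical universal curve in $X\times\mathbb{P}^m$; your section is precisely the geometric avatar of the paper's diagonal-cone weight in Lemma \ref{lem:cup}(2), so the two proofs shadow one another, but yours avoids the displacement calculus entirely and would generalise beyond the toric setting, while the paper's combinatorial version yields the explicit weight formulas reused in Proposition \ref{prop:paths} and Computation \ref{comp:curvesinP1}. Two phrasings need repair, neither fatal: $\pi_X$ and $\Phi$ are nowhere isomorphisms ($\dim\mathcal{X}_0=m+2$, and over the torus $\Phi$ is $\operatorname{id}\times\pi_{\mathbb{P}^m}$, a birational contraction in the $\mathbb{P}^m$ factor). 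What you actually need is that over $U=\pi_X^{-1}(X^o)\cong {PT}_0(X,\beta)\times X^o$ the strict transform defining $\mathcal{Z}$ agrees with the total transform $\Phi^\star\mathcal{C}$ as Cartier divisors; this holds since any component of $\Phi|_U^{-1}(\mathcal{C})$ lying over a boundary divisor $E$ would have dimension at most $\dim E+1=m<m+1$, the fibres of $\mathcal{C}\to\mathbb{P}^m$ being curves. Granting this, and the identification of rational operational classes with Minkowski weights that the paper itself invokes after Theorem \ref{thm:auxtominkwt}, your chain $\tau_0([pt])=\sigma^\star[\mathcal{Z}]=(\Phi\circ\sigma)^\star[\mathcal{C}]=\pi_{\mathbb{P}^m}^\star[H]$ is sound; note finally that your convention $\operatorname{ch}_1(\mathcal{O}_{\mathcal{Z}})=[\mathcal{Z}]$ matches the paper's intended reading of $c_1(\mathcal{J}_{\mathcal{Z}})$, which Lemma \ref{lem:cup} likewise treats as intersection with $\mathcal{Z}$.
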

	This intuitive result allows us to compute first examples. There is a fan associated to $\M$, each cone $\sigma$ in the fan defines a cycle in the Chow group $A_\star(\M)$. For $\sigma$ a cone of codimension $k$ we give a combinatorial algorithm to compute $\int \tau_{0}([\mathrm{pt}])^k[V(\sigma)]$. 
	We explain how to apply logarithmic insertions and with this calculus compute the following maximal contacts situation in Section \ref{sec:firstcomp}.
	
	\begin{computation}\label{comp:curvesinP1}
		Take $X = \mathbb{P}^2$ and $\beta$ the class of curves degree $d$. Let $\mu$ be the class in $\mathrm{Hilb}^{\mathrm{log}}_d(\partial \mathbb{P}^2)$ of ideal sheaves supported on a single point of (an expansion of) each $\mathbb{P}^1$ in the boundary of $X$. In this situation,
		$$\int (\tau_0([\mathrm{pt}]))^{\frac{(d-4)(d-1)}{2}}\mathrm{ev}^\star(\mu) = 1.$$
	\end{computation} 
	
	The exponent $(d-4)(d-1)/2$ is chosen such that the expected dimension of the integrand the dimension of the linear system. Assuming the logarithmic Gromov--Witten/ Donaldson--Thomas correspondence \cite{MR23}, there is a link between this type of intersection theory on secondary polytopes and Mikhalkin's theorem \cite{Milk}. We intend to explore this connection, and how it can be used to understand the tropicalisation of the Severi variety, in future work.
	
	\subsection{Conventions} We adopt the convention that all toric varieties are normal. We often have cause to work with the fan associated to a toric variety. The fan of a toric variety $V$ will be written $V^\mathfrak{t}$; where $V$ is not a toric variety $V^\mathfrak{t}$ denotes the tropicalisation. What we mean by tropicalisation will be clear from context. All schemes are of finite type over $\mathbb{C}$. All logarithmic structures are fine and saturated.
	
	\subsection{Relation to other work}
	
	The moduli spaces constructed in \cite{MR20} differ from their analogues in Pandharipande--Thomas theory and logarithmic Gromov--Witten theory \cite{abramovich2011stable,chen2011stable,gross2012logarithmic} in that the spaces are only well--defined up to birational modification. The situation is parallel to the spaces of expanded stable maps constructed in \cite{Ran19}, where again, there is no minimal model in general. Our results here show the existence of a minimal model in the special case of a toric surface. 
	
	A link between tropical and algebraic curve counts was pioneered by Mikhalkin \cite{Milk} for surfaces, generalised to higher dimension targets by Nishinou--Siebert\cite{NishSieb06} and linked to logarithmic Gromov--Witten theory by Mandel and Ruddat \cite{MR4068259}. Bousseau proved that certain tropical curve counts could be expressed in terms of logarithmic Gromov Witten invariants for toric surfaces with $\lambda_g$ insertions \cite{BOSS}. Assuming a logarithmic version of the Gromov--Witten/Donaldson--Thomas conjecture \cite{MNOP1,MNOP2}, Bousseau noted \cite[Section~1.1.4]{BOSS} his computation represents a prediction about intersection theory on logarithmic Pandharipande--Thomas spaces. 
	
	The technique of studying curves in toric degenerations has a rich precident in enumerative geometry \cite{BOSS, MR4068259,MR20,NishSieb06, Ran19}. The original construction is due to Mumford\cite{CM_1972__24_3_239_0}. A version of the idea behind the logarithmic linear system in the context of Gromov--Witten theory was suggested by Katz \cite{Katz}. More precisely, Katz was interested in using the geometry of the secondary polytope to impose conditions on enumerative geometry problems.
	
	\subsection{Future Directions and generalisations}
	Theorem~\ref{thm:main}, specifically the existence of a terminal model, holds for any surface $X$ equipped with a simple normal crossing divisor. Thus a canonical logarithmic Pandharipande--Thomas moduli space exists in this general case, opening the way to a number of computations and basic results. The case of logarithmic Calabi--Yau pairs is of particular interest.
	
	The {logarithmic linear system} $\mathcal{PT}_0(X,\beta)$ of divisors on a toric surface generalises to define the logarithmic linear system of divisors of fixed class on a proper toric variety $X$ of any dimension. The constructions of this paper carry over vis a vis to define a similarly well behaved space whose combinatorics are related to secondary polytopes of higher dimension. These spaces are the first examples of components of the logarithmic Hilbert schemes defined in \cite{KH23a} which are not cases of the \cite{MR20} construction.
	
	Studying moduli spaces of subschemes of toric varieties of higher codimension is also possible. The combinatorics in this situation are related to Chow polytopes defined by Kapranov, Sturmfels and Zelevinsky \cite{MR1174606}.
	
	\subsection{Acknowledgements}
	The author wishes to thank Dhruv Ranganathan for suggesting the topic, many helpful conversations and feedback on drafts of this paper. The author is also greatful to the anonymous referee for feedback which improved the presentation of ideas. Thanks are due to Navid Nabijou for several helpful conversations, especially for suggesting our approach in Section~\ref{sec:algorithm}. Dmitri Whitmore is owed thanks for useful discussions on how to implement the algorithm from Section~\ref{sec:algorithm} in Python. Finally thanks are due to the members of the Cambridge algebraic geometry group for the excellent atmosphere.
	
	\section{Fans, toric varieties and the tropical linear system}\label{sec:TropLinSys}
	Let $\Delta$ be a two dimensional lattice polytope in a two dimensional lattice $M_X = \mathbb{Z}^2$. Associated to any polytope there is a fan structure on the dual lattice called the \textit{normal fan}, see \cite[Chapter~5, Section~4B]{GKZ}. Dimension $k$ cones of the normal fan to polytope $P$ biject with codimension $k$ faces of $P$. The normal fan of $\Delta$ is a fan structure $X^\mathfrak{t}$ on the lattice $N_X = \mathrm{Hom}(M_X,\mathbb{Z})$. The identification of $M_X$ with $\mathbb{Z}^2$ gives an identification of $N_X$ with $\mathbb{Z}^2$ by taking the dual basis to the standard basis of $M_X = \mathbb{Z}^2$. 
	
	In this section we work exclusively with fans. The central theme is understanding the connection between maps of fans constructed from secondary fans and tropical curves. We orient the reader by noting the output of this section will be two morphisms of fans $$ \VTUM \xrightarrow{\varpi^\mathfrak{t}} |\Delta| \textrm{ and }\TR\xrightarrow{\kappa^\mathfrak{t}} \TSp. $$  We reserve script letters for objects pertaining to $\kappa$. We construct $|\Delta|$, which is the fan of the coarse moduli space of $\M$, as a subdivision of $\TR$ and therefore begin by understanding the second morphism.
	
	\subsection{The secondary fan}
	We now recall the construction of the \textit{secondary fan of} $\Delta$. This is the normal fan to the secondary polytope of $\Delta$ \cite[Chapter 7 Section 1.C]{GKZ}.
	
	\subsubsection{Ambient lattices}  Define $m= \mathrm{card}(\Delta)-1$ and identify functions from (the integral points in) $\Delta$ to $\mathbb{Z}$ with $\mathbb{Z}^{m+1}$. Elements of the monoid $\HH \cong \mathbb{Z}^m$ thus biject with equivalence classes of functions $f:\Delta\rightarrow \mathbb{Z}$ where $f,f'$ are identified if $f-f'$ is constant. 
	Define functions from $\Delta$ to $\mathbb{Z}$ $$e_x:(i,j) \mapsto i \textrm{ and } e_y:(i,j) \mapsto j,$$ since both functions are linear, there is a natural identification of monoids $N_X = \langle e_x,e_y\rangle$ and we define $\mathscr{N} = N/N_X.$
	
	\subsubsection{Induced polytope subdivisions} The convex hull of the points of $\Delta$ is written $\mathrm{supp}(\Delta)$. A function $\varphi:\Delta \rightarrow \mathbb{R}$ is the data of an element of $N_\mathbb{R} =N \otimes \mathbb{R}$. There is a unique piecewise linear function $\hat{\varphi}: \mathrm{supp}(\Delta) \rightarrow \mathbb{R}$ with graph the lower convex hull of $$\{((i,j),\varphi(i,j))|(i,j)\in \Delta\}\textrm{ in } \mathrm{supp}(\Delta) \times \mathbb{R}.$$ The bend locus of $\hat{\varphi}$ defines a polyhedral subdivision $\Delta_\varphi$ of $\mathrm{supp}(\Delta)$. Note $\Delta_\varphi$ depends only on the image of $\varphi$ in $\mathscr{N}_\mathbb{R} = \mathscr{N}\otimes \mathbb{R}$.
	
	\subsubsection{The secondary fan}\label{sec:SecondaryFan} The \textit{secondary fan} $\TSp$ is the coarsest fan structure on $\mathscr{N}_\mathbb{R}$ such that whenever $\varphi_1, \varphi_2$ lie in the interior of the same cone, there is an equality $\Delta_{\varphi_1} = \Delta_{\varphi_2}.$ 
	The linear system $|\beta| \cong \mathbb{P}^m$ induces an unrelated fan structure on $N_\mathbb{R}$. The $m+1$ maximal cones of $|\beta|^\mathfrak{t}$ are indexed by $\Delta$; the maximal cone labelled $(i,j)$ is the collection of functions $\varphi$ in $N$ such that $\varphi$ adopts its minimal value on $(i,j)$ in $\Delta$.
	We define a fan $\TR$ as the coarsest subdivision of $|\beta|^\mathfrak{t}$ such that every function in a fixed cone induces the same polyhedral decomposition $\Delta_\varphi$. Write $\kappa^\mathfrak{t}$ for the natural map $\TR \rightarrow \TSp.$
	
	\subsection{The tropical linear system} In this section we understand points of $\TSp$ as specifying tropical curves. We upgrade this observation to construct a fan $|\Delta|$ which is a tropical version of the linear system $|\beta|$ mentioned in the introduction.
	\subsubsection{Tropical curves}
	A \textit{tropical curve} $\Gamma_\varphi$ for $\varphi$ in $N_\mathbb{R}$, is the collection of points $(x,y)$ in $N_X\otimes \mathbb{R}$ on which the minimum $$\mathrm{min}_{(i,j)\in \Delta}\{ix+jy + \varphi(i,j)\}$$ is achieved at least twice. An abstract tropical curve $[\Gamma_\varphi]$ is an equivalence class of tropical curves under the smallest equivalence relation identifying $\Gamma_\varphi$ with $\Gamma_{\varphi'}$ whenever $\varphi-\varphi'$ lies in $N_X$. Restricting $\varphi_\mathbb{R}$ to $\Delta$ defines a function $\varphi$ and thus a subdivision $\Delta_\varphi$. The \textit{combinatorial type} $\mathcal{C}(\Gamma_\varphi)$ of $\Gamma_\varphi$ is the subdivision $\Delta_\varphi$ of $\Delta$. Intuitively, two tropical curves have the same combinatorial type if they are qualitatively the same, differing only in the lengths and proportions of their edges.
	\begin{remark}\label{rem:AbsTropCurveTranslates}
		Note $N_X \otimes \mathbb{R} = \mathbb{R}^2$ acts on tropical curves. An element $(c_1,c_2)\in \mathbb{R}^2$ sends $$\Gamma_\varphi \mapsto \Gamma_{\varphi+c_1e_x+c_2e_y} = \Gamma_\varphi + (c_1,c_2).$$ Thus the equivalence class in the definition abstract tropical curve corresponds to the translation action of $N_X \otimes \mathbb{R}$.
	\end{remark}
	\subsubsection{Pre--expansion tropical curves} 
	Note a tropical curve $\Gamma_\varphi$ is the same data as a polyhedral subdivision $\mathfrak{P}_{\Gamma_\varphi}$ of $N_X\otimes \mathbb{R}$ - the tropical curve $\Gamma_\varphi$ is the one skeleton of $\mathfrak{P}_{\Gamma_\varphi}$. Given two tropical curves $\Gamma_{\varphi_1},\Gamma_{\varphi_2}$ we obtain a new polyhedral subdivision $\mathfrak{P}_{\Gamma}$ of $N_X\otimes \mathbb{R}$ by taking the common refinement of $\mathfrak{P}_{\Gamma_{\varphi_1}}$ and $\mathfrak{P}_{\Gamma_{\varphi_2}}$. We denote the resulting one skeleton $\Gamma$. A \textit{pre--expansion tropical curve} is any set $\Gamma$ obtained by setting $\Gamma_{\varphi_2}$ to be the fan $\TX = \Gamma_{0}$ of $X$. 
	
	The underlying topological space of $\Gamma_\varphi$ is denoted $\underline{\Gamma}_\varphi$. The \textit{combinatorial type $\mathcal{C}(\Gamma)$ of a pre--expansion tropical curve} $\Gamma$ obtained by superimposing $\Gamma_\varphi$ onto the fan of $X$ is the pair $$(\mathcal{C}(\Gamma_\varphi), \iota:\underline{\Gamma}_\varphi \hookrightarrow  \underline{\Gamma})$$ where $\iota$ is the inclusion of topological realisations. The definition of combinatorial type captures when two tropical curves are qualitatively the same. In the sequel we reserve $\Gamma_\varphi$ for tropical curves and use $\Gamma$ to denote a pre--expansion tropical curve, unless stated otherwise.
	
	\subsubsection{Flatness for morphisms of fans} A morphism of fans is said to be \textit{combinatorially flat} if the image of every cone is an entire cone. Note combinatorial flatness does not imply the corresponding map of toric varieties is flat.
	
	For us a \textit{family of tropical curves} is a combinatorially flat morphism of complete fans of relative dimension two. This notion of a family of tropical curves admits a  generalisation \cite[Section~2]{KH23a} to the setting of cone complexes and more general tropical objects.
	
	\subsubsection{Family of abstract tropical curves}  
	Fix a function $\varphi^\dagger$ in $\mathscr{N}_\mathbb{R}$ and choose a lift $\varphi_0$ in $N_\mathbb{R}$. For any $\varphi$ in $(\kappa^\mathfrak{t})^{-1}(\varphi^{\dagger})$ we may write $\varphi = \varphi_0 + ae_x + be_y$. Identify $(\kappa^\mathfrak{t})^{-1}(\varphi^\dagger)$ with $N_X\otimes \mathbb{R}$ via the map $\varphi\mapsto (a,b)$ in $N_X\otimes \mathbb{R}$. Thinking of $N_X \otimes \mathbb{R}$ as a subspace of $N_\mathbb{R}$ this map sends a point $\varphi$ of $(\kappa^\mathfrak{t})^{-1}(\varphi^{\dagger})$ to $\varphi - \varphi_0$. Lemma \ref{lem:auxcomb} justifies the terminology family of abstract tropical curves.
	
	\begin{lemma}\label{lem:auxcomb}
		Restricting the fan $\TR$ to $\varphi_0 + N_X\otimes \mathbb{R}\cong N_X\otimes \mathbb{R}$ defines the polyhedral subdivision $\mathfrak{P}_{\Gamma_{\varphi_0}}$ of $N_X\otimes \mathbb{R}$.
	\end{lemma}
	The choice of $\varphi_0$ in the preimage of $\varphi^\dagger$ does not effect the abstract tropical curve $[\Gamma_{\varphi_0}]$. Changing $\varphi_0$ corresponds to taking a different representative of the equivalence class.
	\begin{proof}
		Consider the polyhedral structure $\mathfrak{P}$ on $\varphi_0 + N_X\otimes \mathbb{R}\cong N_X\otimes \mathbb{R}$ induced by $\TR$. The fan $\TR$ is a subdivision of $\TPN$. All points of $(\kappa^\mathfrak{t})^{-1}(\varphi^\dagger) \cong N_X\otimes \mathbb{R}$ induce the same subdivision $\Delta_{\varphi^\dagger}$ of $\Delta$ so the polyhedral structure $\mathfrak{P}$ is induced from the fan structure of $|\beta|^\mathfrak{t}$. Moreover, the preimage of $\varphi$ is not contained in the $m-1$ skeleton of $|\beta|^\mathfrak{t}$. Thus a point of $N_X\otimes \mathbb{R}$ lies in the one skeleton of $\mathfrak{P}$ if and only if it lies in the $m-1$ skeleton of $|\beta|^\mathfrak{t}$. In Section \ref{sec:SecondaryFan} we characterised the $m-1$ skeleton of $|\beta|^\mathfrak{t}$ as the collection of $\varphi$ achieving its minimum on (at least) two points of $\Delta$. Equivalently, a point $\varphi_0 + (a,b)$ of $\varphi_0 + N_X\otimes \mathbb{R}$ lies in the one skeleton if and only if $\varphi_0(i,j) +  ai  + b j$ achieves its minimum for two $(i,j)$ in $\Delta$. Thus the one skeleton is precisely $\Gamma_{\varphi_0}$.
	\end{proof}	
	
	\begin{example}\label{ex:secondpoly}
		Set $X= \mathbb{P}^1 \times \mathbb{P}^1$ and $\beta = (1,1)$ corresponding to a polytope $$\Delta = \{(0,0), (1,0), (0,1), (1,1)\}.$$ Here $m=3$ and the secondary fan $\TSp$ is necessarily the unique complete fan of dimension one so coincides with $(\mathbb{P}^1)^\mathfrak{t}$. To form $\TR$ we take the minimal subdivision of $(\mathbb{P}^3)^\mathfrak{t}$ such that the induced subdivision $\Delta_\varphi$ is constant as $\varphi$ varies within a cone. The map to $(\mathbb{P}^1)^\mathfrak{t}$ is taking dot product with the vector $(1,1,-1)$ so we subdivide along the plane $x+y-z=0$. Note the polyhedral subdivision on $N_X\otimes \mathbb{R}^2$ arising from the fibres over points of $(\mathbb{P}^1)^\mathfrak{t}$ are the tropical curves shown in Figure \ref{fig:Ts}.
	\end{example}
	
	\begin{figure}
		\includegraphics[width=\linewidth]{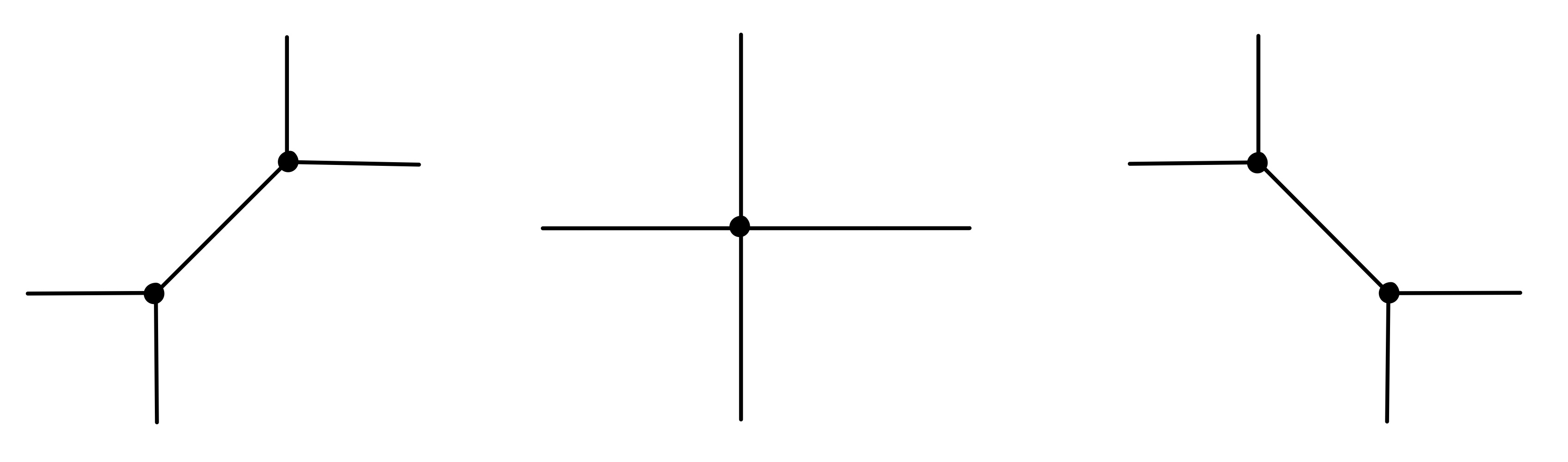}
		\caption{The three combinatorial types of tropical curve dual to the polytope corresponding to the $(1,1)$ class on $\mathbb{P}^1\times \mathbb{P}^1$. These correspond to the three cones of $(\mathbb{P}^1)^\mathfrak{t}$ as in Example \ref{ex:secondpoly}. The central diagram is the tropical curve over the zero cone.}\label{fig:Ts}
	\end{figure} 
	
	\subsubsection{Pre--expansion tropical curves from points of $\TR$}
	Let $W$ be the set of equivalence classes of pairs $(p,\Gamma_\varphi)$ where $p$ is a point in $N_X\otimes \mathbb{R}=\mathbb{R}^2$, $\Gamma_\varphi$ is a tropical curve, and the equivalence relation relates pairs $(p,\Gamma_\varphi)\sim (p',\Gamma_{\varphi'})$ whenever $(p,\Gamma_\varphi)$ is sent to $(p',\Gamma_{\varphi'})$ by the action of $\mathbb{R}^2$ on itself by addition. Elements of $W$ biject with pre--expansion tropical curves: we think of $p$ as specifying the location of the origin. 
	
	Points in the fan $\TR$ specify an element of $W$ by Lemma \ref{lem:auxcomb}. We have thus defined a map of sets $$f:N_\mathbb{R}\rightarrow \{\textrm{pre--expansion tropical curves}\}.$$
	\subsubsection{Augmented tropical curves}\label{sec:AugTropCurve} Fix a tropical curve $\Gamma_\varphi$. Take the fan $X^\mathfrak{t}$ and invert in the origin $$x \mapsto -x$$ to define a new fan $X_\mathfrak{t}$. Superimpose the fan $X_\mathfrak{t}$ at each vertex of $\Gamma_\varphi$. The result is the one skeleton of a polyhedral subdivision $\mathfrak{P}_{\Gamma_\varphi^\mathrm{aug}}$ of $N_X\otimes \mathbb{R}$ which we call the \textit{augmented subdivision}. The augmented subdivision of an abstract tropical curve $[\Gamma_\varphi]$ is the equivalence class of subdivisions $\mathfrak{P}_{\Gamma_\varphi^\mathrm{aug}}$ where the equivalence relation is translation by $N_X\otimes \mathbb{R}$.
	
	The combinatorial type of the pre--expansion tropical curve arising from an element $(p,\Gamma_\varphi)$ of $W$ depends on more data than the combinatorial type of $\Gamma_\varphi$ and the stratum of $\mathfrak{P}_{\Gamma_\varphi}$ in which $p$ lies. This combinatorial type is however specified by the stratum of $\mathfrak{P}_{\Gamma_\varphi^{\mathrm{aug}}}$ in which $p$ lies.
	
	\subsubsection{A family of pre-expansion tropical curves}\label{sec:FanLLS}
	
	We have established an assignment of points in the fan $\TR$ to pre--expansion tropical curves. Our next goal is to build a combinatorially flat morphism of fans to $\TR$ whose fibre over a point $\varphi$ of $N_\mathbb{R}$ carries polyhedral subdivision with one skeleton $f(\varphi)$. There can be no such combinatorially flat morphism to $\TR$ if $f$ assigns different combinatorial types to points in the interior of a single cone of $\TR$. 
	
	We are thus forced to subdivide $\TR$ as follows. To a face $e$ of $\Delta$ with outward facing normal vector $(a,b)$ associate the element $\varphi_e=ae_x+be_y$ of $N_\mathbb{R}$. We define a fan $|\Delta|$ as a subdivision of $\TR$ by subdividing along the codimension one locus of functions $\varphi$ which may be written as $g + \lambda \varphi_e$ for some edge $e$, positive real $\lambda$ and function $g$ adopting its minimal value on at least three points of $\Delta$ which are not colinear. To understand why this is the right subdivision to make, note that $g$ is a vertex of a tropical curve under the correspondence of Lemma \ref{lem:auxcomb}. Thus thinking of $N_X$ as a sublattice of $N$, functions of the form $g + \lambda \varphi_e$ correspond to functions $h$ such that $h-g$ lies on a ray of the fan of $X$.
	\begin{remark}
		One could instead construct the correct universal family over $\TR$ and observe it was not combinatorially flat. To resolve this one would push forward the fan structure on the universal family as in \cite[Proof of Lemma 2.1.1]{molcho2019universal}. The resulting subdivision is precisely $|\Delta| \rightarrow \TR$.
	\end{remark}
	
	Consider the fibre product in the category of fans $\VTM\times_{\TSp} \TR$. See \cite[Section 2.2]{molcho2019universal} for fibre products of fans. Points in this fan are pairs of functions $(\varphi,\phi)$ such that $\varphi-\phi$ lies in $N_X$. Pull back the fan structure of $X$ along the map 
	\begin{align*}
	\VTM\times_{\TSp} \TR &\rightarrow  N_X\\
	(\varphi,\phi) &\mapsto \varphi-\phi
	\end{align*} 
	to obtain a fan $\VTUM$ subdividing $\VTM~\times_{\TSp}~\TR.$ The pull back fan structure is defined to be the coarsest subdivision such that the image of each cone lies within a single cone of $X^\mathfrak{t}$. 
	
	Projection to the first factor induces a morphism ${\varpi}^\mathfrak{t}:\VTUM \rightarrow \VTM$; by construction we have a morphism $\pi_X^\mathfrak{t}: \VTUM \rightarrow \TX$.
	The next proposition makes a link between $\VTUM \rightarrow |\Delta|$ and a family of pre--expansion tropical curves.
	\begin{proposition}\label{prop:tropmod} With notation as above we prove the following.
		\hfill \begin{enumerate}[(1)]
			\item The combinatorial type of $f(\varphi)$ is constant for $\varphi$ in the interior of cones of $\VTM$. Moreover, fixing a combinatorial type $[\Gamma]$, there is a unique cone $\sigma$ such that whenever $f(\varphi)$ has combinatorial type $[\Gamma]$, in fact $\varphi$ lies in $\sigma$.
			\item Fix $\varphi$ in $N_\mathbb{R}$. Identifying $N_X\otimes \mathbb{R}$ with $(\varpi^\mathfrak{t})^{-1}(\varphi)$ via the map $\pi_X^\mathfrak{t}$, the polyhedral structure on $(\varpi^\mathfrak{t})^{-1}(\varphi)$ induced by the fan $\VTUM$ is $\mathfrak{P}_{f(\varphi)}$.
		\end{enumerate} 
	\end{proposition}
	\begin{proof}\hfill
		\begin{enumerate}[(1)]
			\item The map $\TR\rightarrow |\Delta|$ is combinatorially flat by construction. Consequently the combinatorial type of $f(\varphi)$ is constant as $\varphi$ moves within the interior of a cone. We now prove the cone $\sigma$ exists. Fixing $\mathcal{C}(\Gamma_\varphi)$, the combinatorial type of an augmented subdivision is a map of topological spaces $$\iota:\underline{\Gamma}_\varphi\rightarrow \underline{\Gamma}.$$ There is a cone $\sigma^\dagger$ of $\TSp$ such that $\varphi^\dagger$ lies in $\sigma$ whenever $\varphi^\dagger$ is a point of $\mathscr{N}_\mathbb{R}$ and $[\Gamma_{\varphi^\dagger}]$ has the same combinatorial type as $[\Gamma_{\varphi}]$. The preimage of $\sigma$ in $N_\mathbb{R}$ is a union of cones indexed by faces of $\mathfrak{P}_{\Gamma_{\varphi}}^\mathrm{aug}$. There is a unique face of this polyhedral subdivision corresponding to the combinatorial type $(\Gamma_\varphi,\iota)$. The cone indexed by this face is the cone $\sigma$ in our statement.
			\item We have identified the fibre over a point $\varphi$ in ${N}_\mathbb{R}$ of the projection map $\TR \times \VTM \rightarrow \VTM$ with $N_X \otimes \mathbb{R}$ by sending $(\varphi+ae_x + b e_y,\varphi)$ to $(a,b)$ in $N_X \otimes \mathbb{R}$. According to Lemma \ref{lem:auxcomb}, the polyhedral structure on this fibre arising from the fan $\TR \times \VTM$ has one skeleton given by those functions $(\varphi+ae_x + b e_y,\varphi)$ such that $(a,b)$ lies in $\Gamma_\varphi$. 
			
			The fan $\VTUM$ is obtained by subdividing the product $\TR \times \VTM \rightarrow \VTM$ by pulling back the fan structure of $\TX$ along $\pi^\mathfrak{t}_X$. The effect on the polyhedral structure of the fibre over $\varphi$ is to impose the fan of $X$ centred at the point $(\varphi,\varphi)$. 
		\end{enumerate}
	\end{proof}
	\begin{example}\label{ex:TropNotBiject}
		The morphism $f$ is not injective and so the cone complexes underlying the fans that we construct are not precisely the moduli spaces studied in \cite[Section 2]{KH23a}. The consequences are discussed in Remark \ref{rem:toricStructures}.
		To see $f$ is not injective consider functions $$\Delta = \mathrm{conv}\{(0,0), (2,0), (0,2)\}\xrightarrow{\varphi} \mathbb{R} $$
		which are nowhere negative and $\varphi(0,0) = \varphi(2,0)= \varphi(0,2) = 0$. All such $\varphi$ give rise to the same tropical curve $\Gamma_\varphi$.
	\end{example}
	\begin{figure}
		\includegraphics[width=\linewidth]{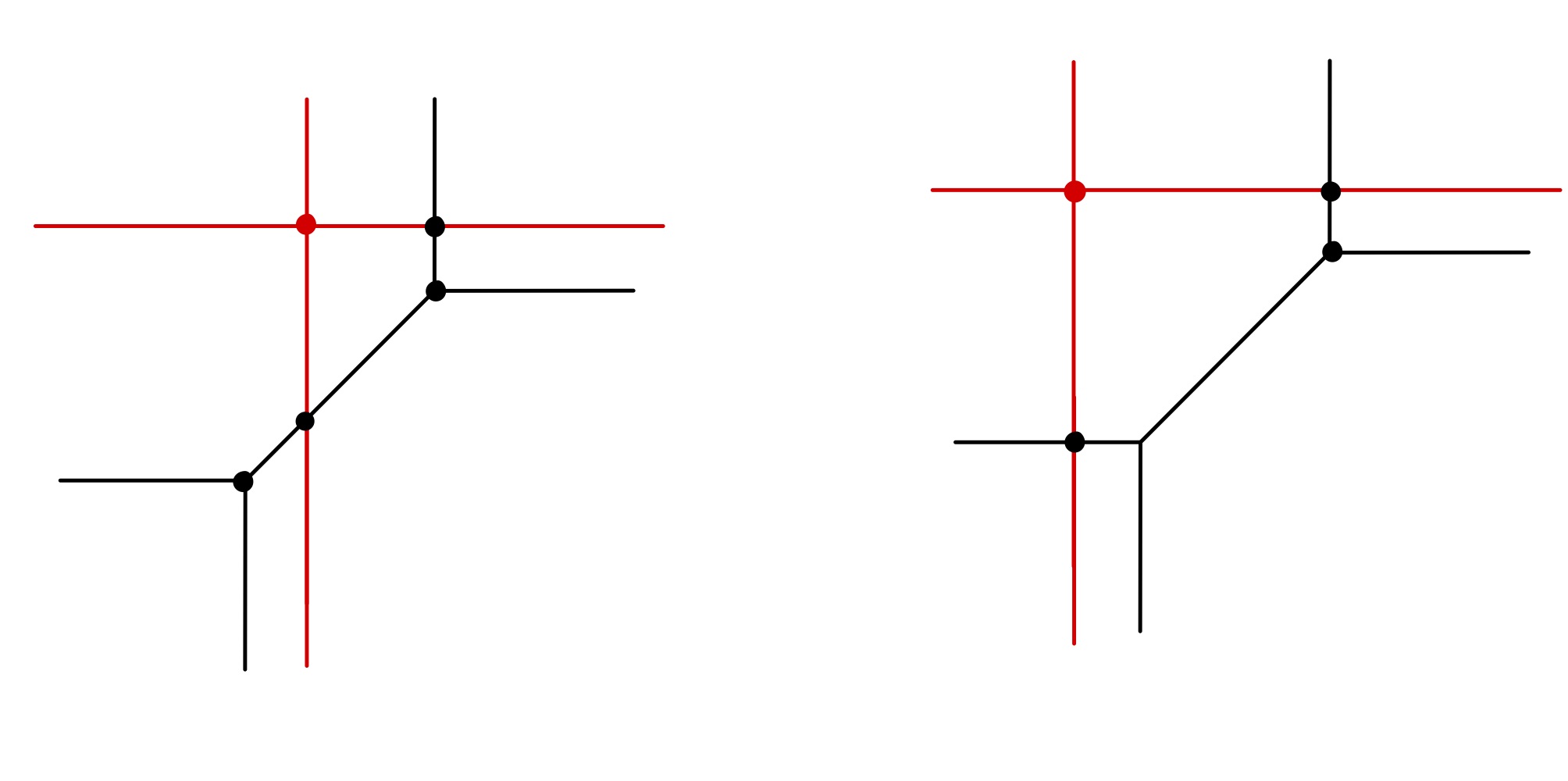}
		\caption{Two possible locations for the origin on the polyhedral subdivision arising from the first curve in Figure \ref{fig:Ts}. The two curves shown have different combinatorial type explaining why the subdivision is needed at the start of Section \ref{sec:FanLLS}.}\label{fig:twocone}
	\end{figure}
	
	\section{The logarithmic linear system}\label{sec:one}
	There is a functor from the category of fans to the category of (normal) toric varieties. We refer to this functor as the \textit{toric dictionary}. In this section we use the toric dictionary to build logarithmic moduli spaces and associated universal data from the morphisms of fans studied in Section~\ref{sec:TropLinSys}.
	\subsection{Construction of the logarithmic linear system}\label{sec:one:construction}
	Let $\Delta$ be a two dimensional lattice polytope in a two dimensional lattice $M_X \cong \mathbb{Z}^2$. These data determine a proper toric surface $X$ equipped with an ample curve class $\beta$. In this section we construct the following diagram discussed in the introduction.
	
	\begin{equation*}
	\begin{tikzcd}
	{\mathcal{Z}} \arrow[r, hook] & \UM_0 \arrow[d, "\varpi"] \arrow[r, "\pi_X"] & X                        \\
	& \M \arrow[r, "\mathrm{ev}"']           & \mathrm{Hilb}^{\mathrm{log}}_\beta(\partial X)
	\end{tikzcd}
	\end{equation*}
	In this diagram $\mathrm{Hilb}^{\mathrm{log}}_\beta(\partial X)$ is the logarithmic Hilbert scheme of points on the boundary of $X$. A point of $\mathrm{Hilb}^{\mathrm{log}}_\beta(\partial X)$ specifies a collection of points on an expansion of the logarithmic boundary of $X$.
	
	The diagram of fans on the left may be passed through the toric dictionary to obtain the diagram of toric varieties on the right. \begin{equation*}
	\begin{tikzcd}
	\VTUM \arrow[r] \arrow[d, "\varpi^\mathfrak{t}"] & \TX & \VUM \arrow[d, "\tilde{\varpi}"] \arrow[r] & X \\
	{|\Delta|}                                         &     & \VM                    &  
	\end{tikzcd}
	\end{equation*} 
	The universal expansion $\VUM$ contains a universal curve $Z$ which we now construct. 
	
	\begin{construction}\label{cons:univsubsch}
		(Universal subscheme.) We have constructed morphisms of fans $$\VTUM\rightarrow \VTM\times_{\TSp}\TR \rightarrow\TR \xrightarrow{\pi_{\mathbb{P}^m}^\mathfrak{t}} \TPN.$$ The toric dictionary gives corresponding morphisms of toric varieties $$\VUM\rightarrow Y \rightarrow\mathscr{X} \xrightarrow{\pi_{\mathbb{P}^m}} \mathbb{P}^m$$ We think of $\mathbb{P}^m$ as the linear system $|\beta|$ and thus identify coordinates $X_{i,j}$ with elements $(i,j)$ of $\Delta$. Pull back the line bundle and section $(\mathcal{O}(1),\sum_{(i,j)\in \Delta} X_{i,j})$ along $\pi_{\mathbb{P}^m}$ to give $(L', s')$. Define $Z^\dagger$ to be the zero set of $s'$. Define $Z$ to be the strict transform of $Z^\dagger$ in $X_{\mathrm{uni}}$.
	\end{construction}
	
	\subsubsection{Flattening the universal expansion}\label{sec:flattening} We perform universal semistable reduction to the morphism $\tilde{\varpi}$ to obtain a semistable (and thus flat) morphism of toric stacks. The tools were introduced in \cite{molcho2019universal}. The result is that $\varpi$ is a flat map in the following diagram.
	\begin{equation}\label{square}
	\begin{tikzcd}
	\mathcal{Z} \arrow[r] & \UM_0 \arrow[r, "F_{\UM}"] \arrow[d, "\varpi"] & \VUM \arrow[d, "\tilde{\varpi}"] \arrow[r] & X \\
	& \M \arrow[r, "F_{\M}"]             & \VM                      &  
	\end{tikzcd}
	\end{equation}
	
	\noindent The two parallel horizontal morphisms are maps from stacks to their coarse moduli spaces. Here $\mathcal{Z}$ is the strict transform of $Z$ under $F_{\UM}$.
	
	\subsubsection{Logarithmic Hilbert schemes of points} It remains to construct a morphism to the logarithmic Hilbert scheme of points on the boundary. To do so we build $\B_k$, the logarithmic linear system of $k$ points on $\mathbb{P}^1$. 
	
	Setting $\Delta=[0,k]$ a one dimensional polytope in a one dimensional lattice $M_{\mathbb{P}^1}$, the construction $\VUM \rightarrow \VM$ goes through vis a vis except now $\HH$ has dimension $k$, and $\RH$ is the quotient of $N$ by the rank $1$ subgroup spanned by $e_x$ where $$e_x: i \mapsto i.$$ The map $N \rightarrow N_X$ is replaced by a map $N \rightarrow \mathbb{Z}= N_{\mathbb{P}^1}$ along which we pull back the fan structure $(\mathbb{P}^1)^\mathfrak{t}$. The resulting diagram appears on the left below. Applying universal weak semistable reduction we obtain the diagram on the right
	$$
	\begin{tikzcd}
	X_{\mathrm{uni},\mathbb{P}^1} \arrow[r, "\pi_X'"] \arrow[d, "\tilde{\varpi}"] & \mathbb{P}^1 & \UM_{\mathbb{P}^1} \arrow[d, "\varpi"] \arrow[r, "\pi_X"] & \mathbb{P}^1 \\
	\VB_k                                         &   & \B_k                                        &  
	\end{tikzcd}.
	$$
	
	Label the dimension one faces of $\Delta$ as $\{1,...,p\}$ and suppose face $i$ has length $k_i$. Define $\mathrm{Hilb}^{\mathrm{log}}_\beta(\partial X) = \prod_{i=1}^p \B_{k_i}$. On the level of fans there are $p$ morphisms $\mathrm{ev}_i:\VTM\rightarrow \TB_{k_i}$ defined by restricting a function $\varphi$ in $N$ to the points of $\Delta$ on face $i$. These define morphisms in the algebraic category via the toric dictionary. Take the product of these maps to define the morphism $\mathrm{ev}$.
	
	\subsection{Expansions from tropical curves}\label{sec:one:modexp} 
	
	In this subsection we take the first steps to explaining the modular interpretation of $\M$. We first explain how to obtain an expansion of $X$ from a tropical curve. See \cite{InvitToricDegen} for background on toric degenerations. In the next construction we use $\Gamma$ to denote the one skeleton of any polyhedral subdivision of $\mathbb{R}^2$. In particular $\Gamma$ need not be a pre--expansion tropical curve.
	\begin{construction}\label{cons:expansions}
		The cone $C_\Gamma^\mathfrak{t}$ over $\mathfrak{P}_\Gamma$ gives rise via the toric dictionary to a toric variety equipped with a map to $\mathbb{A}^1$. One obtains a surface $X_\Gamma$ as the special fibre of this degeneration.
		
		Assume now $\mathfrak{P}_\Gamma$ refines the fan of $X$. For example, this occurs when $\Gamma$ is a pre--expansion tropical curve. There is a morphism of fans $C_\Gamma^\mathfrak{t}\rightarrow C_{X^\mathfrak{t}}^\mathfrak{t}$ and thus a morphism $X_\Gamma \rightarrow X.$
	\end{construction}
	
	\begin{definition}
		An \textit{expansion} of $X$ is the output of Construction~\ref{cons:expansions} when the input is a pre--expansion tropical curve. A \textit{pre--expansion} is the domain of an expansion.
	\end{definition}
	
	\begin{remark}\label{rem:SurfacesAndTropicalCurves}We explain the link to Section \ref{sec:one:construction}. Suppose $\varpi: Y\rightarrow X$ is a surjective morphism of proper toric varieties corresponding to combinatorially flat morphism of fans. Suppose further $\mathrm{dim}(Y) = \mathrm{dim}(X)+2$. 
		
		Fix $\sigma$ a cone in the fan of $X$. The preimage under $\varpi^\mathfrak{t}$ of a point $p$ of $\sigma$ is a copy of $N_X \otimes \mathbb{R}$ equipped with a polyhedral subdivision $\mathfrak{P}_{\Gamma_\sigma}$ induced by the fan structure of $Y$. Combinatorial flatness ensures the polyhedral subdivision is independent of the choice of $p$ in the interior of $\sigma$. 
		For $x$ any point in the torus orbit associated to $\sigma$, $\varpi^{-1}(x)\rightarrow X$ is the expansion arising from $\Gamma_\sigma$ through Construction \ref{cons:expansions}.
	\end{remark}
	In Section \ref{sec:FanLLS} we constructed families of pre--expansion tropical curves. We now understand that after applying the toric dictionary and universal weak semistable reduction, these maps yield morphisms we can think of as \textit{families of expansions}.
	\FloatBarrier
	\subsection{Strongly transverse and stable curves}\label{sec:unividealsheaf}
	
	In this subsection we explain which curves $\M$ parameterises. The key input is a theorem of Ascher and Molcho \cite[Theorem 1]{aschermolcho} which built on results of Kapranov, Sturmfels and Zelevinksy \cite{QuotientsToricVar}.
	
	\subsubsection{Strongly transverse and stable curves} Assume a pre--expansion tropical curve $\Gamma$ is obtained by superimposing tropical curve $\Gamma_\varphi$ on to the fan of $X$. There is a canonical morphism $q_\Gamma:X_{\Gamma} \rightarrow X_{\Gamma_{\varphi}}$ induced by the subdivision $$C_\Gamma^\mathfrak{t}\rightarrow C_{\Gamma_{\varphi}}^\mathfrak{t}.$$
	
	\begin{remark}\label{rem:notation}
		There is a bijection between vertices of $\Gamma_{\varphi}$ and two dimensional faces $\Delta_i$ of $\Delta_\varphi$. Both sets biject with irreducible components $X_i$ of $X_{\Gamma_{\varphi}}$. Each $\Delta_i$ is a lattice polytope in $M_X$ and thus specifies a toric variety and $(\mathbb{C}^\star)^2$ linearised line bundle. This toric variety is isomorphic to $X_i$. The $(\mathbb{C}^\star)^2$ linearised line bundles glue to give a linearised line bundle on $X_{\Gamma_{\varphi}}$. See \cite[Section~4]{aschermolcho} for background.
	\end{remark}
	
	Our next definition reflects language used in \cite{aschermolcho}.
	\begin{definition}
		A surface $X_{\Gamma_{\varphi}}$ equipped with a linearised line bundle $L$ arising as in Remark \ref{rem:notation} is called a \textit{broken toric surface}.
	\end{definition}

	We specify a $(\mathbb{C}^\star)^2$ linearisation of $\mathcal{O}(1)$ on $\mathbb{P}^m$. Recall the coordinates on $\mathbb{P}^m$ naturally biject with $\Delta$ and so can be labelled $X_{i,j}$. An element $(s,t)$ of $(\mathbb{C}^\star)^2$ acts on a basis of $\mathcal{O}_{\mathbb{P}^m}(1)$ by sending $$(s,t):X_{i,j}\mapsto s^it^jX_{i,j}.$$
	
	\begin{definition}
		Let $X_{\Gamma_\varphi}$ be a broken toric surface. A \textit{stable toric morphism to $\mathbb{P}^m$} is a morphism $$f:X_{\Gamma_\varphi}\rightarrow \mathbb{P}^m$$ and a $(\mathbb{C}^\star)^2$ equivariant isomorphism $f^\star(\mathcal{O}(1))\cong L$.
	\end{definition}
	\begin{definition}\label{def:STS}
		A curve $C^\dagger$ in $X_{\Gamma_{\varphi}}$ is \textit{strongly transverse and stable} if it arises by pulling back the section $\sum_{i,j} X_{i,j}$ of $\mathcal{O}(1)$ along a finite stable toric morphism $X_{\Gamma_{\varphi}}\rightarrow \mathbb{P}^m$. Two curves $C^\dagger_1,C^\dagger_2$ are considered isomorphic if there is an isomorphism $$\phi:X_{\Gamma_{\varphi}}\rightarrow X_{\Gamma_{\varphi}}$$ which sends each irreducible component of $X$ to itself and which sends $C^\dagger_1$ to $C^\dagger_2$.
		
		A curve $C$ in $X_{\Gamma}\rightarrow X$ is strongly transverse and stable if it is the strict transform along $q_\Gamma$ of a strongly transverse and stable curve. Two such curves $C,C'\rightarrow X_\Gamma \rightarrow X$ are considered isomorphic if we have the following commutative triangle with $g$ an isomorphism such that $g(C) = C'$.
		$$
		\begin{tikzcd}
		X_\Gamma \arrow[r, "g"] \arrow[d] & X_\Gamma \arrow[ld] \\
		X                                 &                    
		\end{tikzcd}  
		$$
	\end{definition} 
	
	\subsubsection{Stable curves and maps from broken toric surfaces.} We show that strongly transverse and stable curves are precisely those curves obtained by intersecting the subscheme $\mathcal{Z}$ defined in Section~\ref{sec:flattening} with $\varpi^{-1}(p)$ for some point $p$ of $\M$. Note strongly transverse and stable curves are subcurves of an expansion and have no relation to the stable curves studied in Gromov--Witten theory. Definition \ref{def:STS} is motivated by the following consequence of \cite[Theorem 1]{aschermolcho}.
	
	\begin{theorem}\label{thm:AscherMolcho}
		Consider the diagram used in Construction~\ref{cons:univsubsch}	$$
		\begin{tikzcd}
		\R \arrow[d,"\kappa"] \arrow[r,"\pi_{\mathbb{P}^m}"] & \mathbb{P}^m \\
		\VSp                &             
		\end{tikzcd}.
		$$ 
		For each stable toric morphism $g:X_{\Gamma_{\varphi}}\rightarrow \mathbb{P}^m$ arising from $L$ there is a unique closed point $x$ in $\VSp$ such that $$\pi_{\mathbb{P}^m}|_{(\varpi^{\dagger})^{-1}(x)}:\kappa^{-1}(x)\rightarrow \mathbb{P}^m$$ coincides with $g$.
	\end{theorem}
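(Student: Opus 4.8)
The plan is to reinterpret the statement as a bijection: I will show that the closed points of $\VSp$ are in bijection with stable toric morphisms to $\mathbb{P}^m$ arising from $L$, the bijection sending a point $x$ to the restriction $\pi_{\mathbb{P}^m}|_{(\varpi^\dagger)^{-1}(x)}$. Existence and uniqueness of the point $x$ attached to a given $g$ then correspond exactly to surjectivity and injectivity of this assignment.

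First I would identify the fibres of $\varpi^\dagger$ together with the maps to $\mathbb{P}^m$ that they carry. The morphism $\varpi^\dagger:\R\to\VSp$ is the toric morphism induced by the quotient $(\varpi^\dagger)^\mathfrak{t}:N\to N^\dagger=N/N_X$. Fixing a cone $\sigma^\dagger$ of the secondary fan $\TSp$, corresponding to a regular subdivision $\Delta_{\varphi^\dagger}$ of $\Delta$, I take $x$ in the associated torus orbit. By Lemma~\ref{lem:auxcomb} the restriction of the fan $\TR$ to a fibre of $(\varpi^\dagger)^\mathfrak{t}$ is precisely the polyhedral subdivision of $\mathbb{R}^2$ whose one-skeleton is the tropical curve $\Gamma_\varphi$ dual to $\Delta$; combining this with Construction~\ref{cons:brokentoric} and the reasoning of Lemma~\ref{lem:toricvar} identifies $(\varpi^\dagger)^{-1}(x)$ with the broken toric surface $X_{\Gamma_\varphi}$. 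Since $\pi_{\mathbb{P}^m}$ is the toric morphism determined by the subdivision $\TR\to\TPN$, its restriction to this fibre is a toric morphism $X_{\Gamma_\varphi}\to\mathbb{P}^m$, and the equivariant bundle $\pi_{\mathbb{P}^m}^\star\mathcal{O}(1)$ restricts to the linearised bundle $L$ of Remark~\ref{rem:notation}. Hence every such restriction is a stable toric morphism arising from $L$, so the assignment $x\mapsto \pi_{\mathbb{P}^m}|_{(\varpi^\dagger)^{-1}(x)}$ is well defined.

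It then remains to see that this assignment is a bijection, and here I would invoke the key input. This statement is the modular description of the toric variety of the secondary polytope, which is the content of \cite[Theorem~1]{aschermolcho} (building on \cite{QuotientsToricVar}): the closed points of $\VSp$ are in bijection with stable toric morphisms from broken toric surfaces to $\mathbb{P}^m$, realised through the universal family $\R\to\VSp$ together with $\pi_{\mathbb{P}^m}$. Given a stable toric morphism $g$, its domain determines the subdivision $\Delta_{\varphi^\dagger}$, hence the orbit $O_{\sigma^\dagger}$ containing $x$, while the equivariant data of $g$ pins down the coordinates of $x$ inside that orbit; injectivity yields uniqueness and surjectivity yields existence.

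The hard part will be the precise matching of data in this last step. I must verify that the equivariant isomorphism $g^\star\mathcal{O}(1)\cong L$ built into the definition of a stable toric morphism is exactly the rigidification that removes the residual torus ambiguity, so that it selects a single point of $O_{\sigma^\dagger}$ rather than the whole orbit. Concretely, this amounts to checking that the family $\R\to\VSp$ produced by Constructions~\ref{cons:UM} and~\ref{cons:univsubsch} is literally the universal family of \cite[Theorem~1]{aschermolcho}; once the combinatorial dictionary between our fans and their moduli problem is in place, the bijection — and with it both existence and uniqueness — follows formally.
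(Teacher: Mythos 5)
Your proposal is correct and follows essentially the same route as the paper: both proofs rest entirely on \cite[Theorem~1]{aschermolcho}, identifying the diagram $\R \to \VSp$ with Ascher--Molcho's universal family and translating their moduli description (logarithmic morphisms from stable toric varieties to a polarised $\mathbb{P}^m$) into the language of stable toric morphisms, which is exactly your rigidification step via the equivariant isomorphism $g^\star\mathcal{O}(1)\cong L$. The paper simply asserts the identification of families and the equivalence of moduli problems in three sentences, while you flag these as steps to verify and add an explicit fibre-wise well-definedness check via Lemma~\ref{lem:auxcomb} and Construction~\ref{cons:brokentoric} --- elaboration, not a different argument.
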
 
	A strongly transverse and stable curve in a broken toric surface is a Cartier divisor. Fixing once and for all a generic section of $O(1)$ on $\mathbb{P}^1$, this observation means the above theorem induces a bijection between points of $\mathcal{P}_\Delta$ and strongly transverse and stable curves in broken toric surfaces.
	\begin{proof}
		Ascher and Molcho identify the coarse moduli space of logarithmic maps from stable toric varieties to $\mathbb{P}^m$ with the Chow quotient of $\mathbb{P}^m$ by a two dimensional torus. This Chow quotient coincides with $\VSp$ by \cite{QuotientsToricVar}. The coarse moduli space carries universal data: the universal family of toric surfaces is precisely $\R\rightarrow \VSp$ and there is a universal morphism $\R \rightarrow \mathbb{P}^m$. We have thus characterised the diagram in the statement of our theorem in terms of a coarse moduli space and universal data and our claim is immediate.
	\end{proof}
	
	The logarithmic boundary of a toric surface $X_\Gamma$ is the divisor $D$ consisting of points which do not lie in the dense torus of any irreducible component of $X_\Gamma$ . A curve $C$ on a broken toric surface is \textit{transverse} if the intersection of $C$ with the logarithmic boundary $D$ consists of finitely many points contained within the smooth locus of $D$. We apply Theorem \ref{thm:AscherMolcho} to check that strongly transverse and stable curves intersect the logarithmic boundary of $X_\Gamma$ or $X_{\Gamma_{\varphi}}$ transversely.  It suffices to check this property for curves on $X_{\Gamma_{\varphi}}$; indeed if $C$ intersects the logarithmic boundary of $X_{\Gamma_{\varphi}}$ transversely then $q_\Gamma^{-1}(C)$ intersects the logarithmic boundary of $X_\Gamma$ transversely. 
	\begin{lemma}\label{lem:strongtransstab}
		Let $C \rightarrow X_{\Gamma_{\varphi}}$ be a strongly transverse and stable curve. The curve $C$ intersects the boundary of $X_{\Gamma_{\varphi}}$ transversely.
	\end{lemma}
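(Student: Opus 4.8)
Write the boundary $\partial X_{\Gamma_{\varphi}}$ as the union of the torus-invariant divisors of the broken toric surface, and denote by $D_e$ the invariant divisor attached to an edge $e$ of a cell of $\Delta_\varphi$. Since transversality is local, the plan is to argue one component at a time. With the notation of Remark~\ref{rem:notation}, let $X_i$ be the toric surface attached to the two-dimensional cell $\Delta_i$ and set $C_i = C \cap X_i$. I would show that $C_i$ avoids every torus-fixed point of $X_i$ and meets every $D_e$ bounding $\Delta_i$ in a finite set of reduced points lying in the dense torus of $D_e$. The divisors $D_e$ of the various $X_i$ exhaust $\partial X_{\Gamma_{\varphi}}$, so this settles the claim on $X_{\Gamma_{\varphi}}$; strict transform along $q_\Gamma$ then transports it to $X_\Gamma$ as already noted before the lemma.

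The first step is to record the equation of $C_i$. By Definition~\ref{def:STS}, $C$ is the pullback of $\sum_{(i,j)\in\Delta} X_{i,j}$ along a finite stable toric morphism $f$, with $(\mathbb{C}^\star)^2$-equivariant isomorphism $f^\star\mathcal{O}(1)\cong L$. Restricting to $X_i$, every $X_{i,j}$ with $(i,j)\notin\Delta_i$ pulls back to zero, while the weight-$(i,j)$ section $f^\star X_{i,j}$ lands in the one-dimensional weight space $\mathbb{C}\cdot x^i y^j$ of $H^0(X_i,L|_{X_i})=H^0(X_i,\mathcal{O}(\Delta_i))$. Hence $C_i = \{s_i = 0\}$ for $s_i = \sum_{(i,j)\in\Delta_i}\mu_{i,j}\,x^i y^j$, a section with Newton polytope $\Delta_i$; finiteness of $f$ forces all $\mu_{i,j}$, and in particular all vertex coefficients, to be nonzero. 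Compatibility of $f$ along a shared edge means the restrictions of $s_i$ and $s_j$ to an internal divisor agree, so the local pieces glue to the single Cartier divisor $C$.

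Corner avoidance is then immediate: a torus-fixed point $p_v$ of $X_i$ corresponds to a vertex $v$ of $\Delta_i$, and in the affine chart at $p_v$ the only monomial of $s_i$ not vanishing at $p_v$ is the vertex monomial, so $s_i(p_v)=\mu_v\neq 0$ and $C_i$ misses $p_v$. As the torus-fixed points of the $X_i$ are exactly the loci where two boundary divisors of $X_{\Gamma_{\varphi}}$ meet, $C$ is disjoint from all corners. For $D_e$ with $e$ an edge of $\Delta_i$, restriction kills every monomial outside $e$, so $s_i|_{D_e}$ is a nonzero univariate polynomial of degree the lattice length $\ell_e$ with nonvanishing top and bottom coefficients; thus $C_i$ does not contain $D_e$ and meets it in finitely many points of the dense torus of $D_e$. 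This already delivers the transversality (finiteness) demanded by the definition preceding the lemma.

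For the refined assertion that these points are reduced, that is, that $C$ meets $\partial X_{\Gamma_{\varphi}}$ with multiplicity one, the plan is to exploit that the section pulled back is $\sum X_{i,j}$ with all coefficients equal. Absorbing the common $(\mathbb{C}^\star)^2$-rescaling into the equivariant isomorphism $f^\star\mathcal{O}(1)\cong L$, one aims to identify $s_i|_{D_e}$ with a torus translate of $\sum_{k=0}^{\ell_e} t^k = (t^{\ell_e+1}-1)/(t-1)$, whose zeros are the $(\ell_e+1)$-th roots of unity other than $1$ — all distinct and all in $\mathbb{C}^\star$. A simple zero of $s_i|_{D_e}$ is precisely the statement that $C_i$ is not tangent to $D_e$ there, giving transverse intersection. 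The step I expect to be the main obstacle is exactly pinning down this coefficient form: one must rule out that the $\mu_{i,j}$ are arbitrary nonzero scalars (for which an edge restriction could acquire a repeated root) by invoking the precise structure of the stable toric morphisms classified in Theorem~\ref{thm:AscherMolcho}, namely that our curves are limits of hyperplane sections of torus-translates of the all-ones configuration, so that on each $D_e$ the restriction retains the root-of-unity shape up to a single toric scaling.
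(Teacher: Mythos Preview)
Your component-by-component argument --- the map $f$ being everywhere defined forces the vertex coefficients of $s_i$ to be nonzero, hence $C_i$ misses the torus-fixed points and $s_i|_{D_e}$ is a nonzero polynomial of degree $\ell_e$ --- is correct and already proves the lemma as stated. This is a genuinely different route from the paper's. The paper instead realises $C$ as a fibre $Z_x^\dagger$ of the universal family over $P_\Delta^\dagger$ via Theorem~\ref{thm:AscherMolcho}, then uses tropicalisation (\cite[Theorem 6.3.4]{MaclaganSturmfels}) to exclude the codimension-two strata, and finiteness of the restricted map $\pi_{\mathbb{P}^m}|_{X_x^\dagger}$ to see that the polynomial $\sum t^{i+j} b_{i,j}$ on a one-dimensional stratum is not identically zero. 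Your Newton-polytope argument is more elementary and self-contained for this lemma in isolation; the paper's argument reuses the universal-family machinery that drives the rest of Section~\ref{sec:one}.

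One minor overclaim: finiteness of $f$ does not force \emph{all} $\mu_{i,j}$ to be nonzero, only the vertex coefficients (and this follows already from $f$ being a morphism, not from finiteness). Interior-lattice-point coefficients may vanish. This does not damage your proof, since you only actually use vertex coefficients.

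The real problem is your final paragraph. You are pursuing a ``refined assertion'' of reducedness that is neither in the lemma nor true. The paper's definition of \emph{intersects the boundary transversely}, stated just before the lemma, means only that the intersection is finite; it does not ask for multiplicity one. And the reducedness claim is false in general: the stable toric morphisms parametrised by $P_\Delta^\dagger$ allow the edge coefficients $\mu_{i,j}$ to be arbitrary nonzero scalars (the dense torus of $P_\Delta^\dagger$ has dimension $m-2$, not $0$), so for an edge of lattice length $\ell_e \ge 2$ the restriction $s_i|_{D_e}$ acquires a repeated root along the discriminant hypersurface. Your proposed identification with $\sum_{k=0}^{\ell_e} t^k$ cannot work, because a $(\mathbb{C}^\star)^2$-rescaling plus an overall scalar gives only three degrees of freedom against $\ell_e+1$ coefficients. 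Drop this paragraph; your proof is complete at the end of the third.
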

	
	The proof of Lemma~\ref{lem:strongtransstab} uses the \textit{tropicalisation of a Laurent polynomial} which we now recall. Write $R=\mathbb{C}[X_0^{\pm 1},...,X_n^{\pm 1}]$ and $T$ for the torus $\mathrm{Spec}(R)$ with cocharacter lattice $M_T$. The \textit{tropicalisation of $f$} \cite[Definition 3.1.1]{MaclaganSturmfels} is a subset $\mathrm{Trop}(f)$ of $M_T \otimes \mathbb{R}$. For $Y$ a proper toric variety with dense torus $T$ and fan $\Sigma_Y$, the closure of $V(f)$ in $Y$ intersects a torus orbit $O(\sigma)$ corresponding to $\sigma$ if and only if $\mathrm{Trop}(f)$ intersects the interior of $\sigma$ \cite[Theorem 6.3.4]{MaclaganSturmfels}.
	
	\begin{proof}
		We set $$x\in \Sp\quad X_x^\dagger=\kappa^{-1}(x)\quad {Z}_x^\dagger={Z}^\dagger \cap X_x.$$ By Theorem \ref{thm:AscherMolcho} there is a point $x$ of $\Sp$ such that ${Z}^\dagger_x \rightarrow X_x^\dagger$ is isomorphic to $C \rightarrow X_{\Gamma_{\varphi}}$ so we are left to check ${Z}_x^\dagger\hookrightarrow X_x^\dagger$ intersects the boundary transversely for all such $x$.
		
		We first prove ${Z}_x^\dagger$ does not intersect any strata of codimension two in the boundary of $X_x^\dagger$. Indeed the proof of Lemma \ref{lem:auxcomb} showed that a point $y$ in $X_x^\dagger$ lies in a stratum of $X_x^\dagger$ of codimension two if and only if the cone in $\TR$ corresponding to the torus orbit of $y$ is mapped to a maximal cone in the fan of $|\beta|^\mathfrak{t}$ under $\pi^\mathfrak{t}$. The tropicalisation of the Laurent polynomial defining the restriction of ${Z}^\dagger$ to the dense torus in $\mathbb{P}^m$ is the support of the cones of codimension one in $||^\mathfrak{t}$. By \cite[Theorem 6.3.4]{MaclaganSturmfels} the point $y$ cannot lie in the hyperplane $V\left(\sum_{(i,j)}X_{i,j}\right)$.
		
		We check ${Z}_x^\dagger$ does not contain any dimension one logarithmic stratum of $X_x^\dagger$. Restricting $\pi_{\mathbb{P}^m}$ to $X_x^\dagger$ is a torus equivariant map to $\mathbb{P}^m$. Consider the interior $O(\gamma)$ of an irreducible component of the logarithmic boundary of $X_x^\dagger$. Fix an isomorphism from $O(\gamma)$ to $\mathbb{C}^\star$ and an isomorphism from its closure $V(\gamma)$ to $\mathbb{P}^1$. We must show the image  of $V(\gamma)$ in $\mathbb{P}^m$ intersects $\sum_{i,j} X_{i,j}=0$ at finitely many points. Suppose the origin in the dense torus is mapped to $[b_{i,j}]$. Then since our map is torus equivariant a point $t$ of $\mathbb{C}^\star$ is taken to $[t^{i+j}b_{i,j}]$. This point lies in the zero set of $\sum_{i,j}X_{i,j}$ if and only if $$k(t)=\sum_{i,j}t^{i+j}b_{i,j}=0.$$ This is not the zero polynomial since $\pi_{\mathbb{P}^m}$ restricted to $X^\dagger_x$ is finite, so $k(t)$ has finitely many roots and the result is proved.
	\end{proof}

	\subsection{Torus action}\label{torusact}
	
	Note $X$ is toric and thus admits the action of its dense torus $X^o =\left(\mathbb{C}^\star\right)^2$. 
	
	\subsubsection{Torus action on strongly transverse and stable curves.} Denote the set of isomorphism classes of strongly transverse and stable curves $$\mathcal{\mathcal{STS}} =\{\textrm{isomorphism classes of strongly transverse and stable curves }{C}\hookrightarrow X_\Gamma\rightarrow X\}.$$ In this definition $X_\Gamma\rightarrow X$ is any expansion of $X$. An element $x$ of $X^o$ whose action on $X$ defines an automorphism $\phi_x: X\rightarrow X$ on $X$ induces an automorphism of the set $\mathcal{\mathcal{STS}}$ by sending $$C \hookrightarrow X_\Gamma \xrightarrow{\pi_X} X\textrm{ to } C \hookrightarrow X_\Gamma \xrightarrow{\phi_x \circ \pi_X} X.$$ In this way we have defined a group action of $X^o$ on $\mathcal{STS}$.
	
	\subsubsection{Subgroups of $X^o$} Fix a combinatorial type $\mathcal{C}(\Gamma)$ of pre--expansion tropical curve and a strictly transverse and stable curve $\iota^\dagger:C^\dagger \hookrightarrow X_{\Gamma_\varphi}$. In this subsection we associate a subgroup $X^o_\Gamma$ of $X^o$ to the pair $(\mathcal{C}(\Gamma),\iota^\dagger)$. Fix a choice of representative $\Gamma \in \mathcal{C}(\Gamma)$.
	
	\begin{definition}
		The \textit{extended main component} of a pre--expansion tropical curve $\Gamma$ is the collection of points $p$ in $\Gamma$ such that the convex hull of $\{0,p\}$ lies within $\Gamma$.
	\end{definition}
	
	The pre--expansion tropical curve $\Gamma$ arose by superimposing a tropical curve $\Gamma_\varphi$ on to the fan of $X$. In the next paragraph by a vertex of $\Gamma_\varphi$ inside the extended main component we always mean a point in the extended main component which is a vertex of $\Gamma_\varphi$. 
	
	The subcurve $\iota^\dagger$ defines a face of the secondary polytope, see the observation following Theorem~\ref{thm:AscherMolcho}. Faces of the secondary polytope specify a subdivision $\Delta_\varphi$ as well as the set $D_S$ of $\Delta$ on which $\varphi$ adopts the same value as $\hat{\varphi}$. For a vertex $v$ of $\Gamma_\varphi$ corresponding to face $\delta_v$ of $\Delta_\varphi$ we write $M_v$ for the subgroup of $M_X$ generated by elements of $M_X$ contained in $\delta_v$ on which $\varphi$ and $\hat{\varphi}$ coincide. 
	
	The intersect of the kernels of the cocharacters in $M_v$ is a subgroup of $X^o$. We define a subgroup $X^o_\Gamma$ of $X^o$ to be the intersect over vertices $v$ in $\Gamma_\varphi$ of $X_v^o$ where we define $X_v^o$ as follows. 
	
	If $v$ is at the origin then $X_v^o$ is the group corresponding to the sublattice $M_v$. If $X_v$ is not in the extended main component we set $X_v = X^o$. Every other vertex $v$ of $\Gamma_\varphi$ lies in the extended main component and so we can find a unique vector $\textbf{n}$ in the direction of a ray in the fan of $X$ such that $\varphi + \lambda \textbf{n}$ adopts its minimal value on the corners of the face $\delta_v$ of $\Delta_\varphi$ for some real $\lambda$. Write $M(v)$ for the minimal subgroup of $M$ whose image in $M/\textbf{n}$ coincides with the image of $M_v$. Define $X^o_v$ to be the intersect of the kernels of characters in $M(v)$.
	
	\begin{remark}\label{rem:torusact}
		The inclusion $N_X \rightarrow N$ defines an action of $X^o$ on the logarithmic linear system. Dualising to get a map on characters, the stabiliser of the torus orbit corresponding to $\Gamma$ is $X_\Gamma^o$.
	\end{remark}
	\begin{example}
		Let $\Delta$ be the convex hull of $(0,0), (2,0)$ and $(0,2)$ and consider the face associated to the map $\varphi$ sending $(0,0), (2,0),(0,2)\mapsto 0$ and other elements of $\Delta$ to positive numbers. Then $$X_\Gamma^o = \{(1,1),(-1,1),(1,-1), (-1,-1)\}\subset (\mathbb{C}^\star)^2.$$ Geometrically this face corresponds to transverse curves on the trivial expansion $X$ with equations of the form $$aX^2 + bY^2 + c \textrm{ where } a,b,c \textrm{ are in }\mathbb{C}^\star.$$ 
	\end{example}
	\subsubsection{Stabilisers} Fix $\Gamma$ a combinatorial type of pre--expansion tropical curve obtained by superimposing $\Gamma_\varphi$ on to the fan of $X$. Fix also a map $\iota:C^\dagger \hookrightarrow X_{\Gamma_{\varphi}}$. Denote $\mathcal{STS}_\Gamma(\iota)$ the subset of $\mathcal{STS}$ consisting of maps $C\hookrightarrow X_\Gamma \rightarrow X$ pulled back along $q_\Gamma$ from $\iota$.

	\begin{proposition}\label{prop:stabilisers}
		For any element $p$ of  $\mathcal{STS}_\Gamma(\iota)$, the subgroup of $X^o$ stabilising $p$ is $X^o_\Gamma$.
	\end{proposition}
	\begin{proof}
		Let $x$ be an element of $X^o_\Gamma$ and consider a diagram $$
		\begin{tikzcd}
		C \arrow[r, hook] & X_\Gamma \arrow[r] \arrow[d, "h", dashed] & X \arrow[d, "\varphi_x"] \\
		C \arrow[r, hook] & X_\Gamma \arrow[r]                        & X.                       
		\end{tikzcd}$$ We construct an isomorphism $h$ making the diagram commute and mapping $C$ to itself. It follows that $x$ acts trivially on $\mathcal{STS}_\Gamma(\iota).$
		
		We specify the map $h$ by specifying the restriction of $h$ to each component in a compatible way. The restriction of $h$ to the component mapped birationally to $X$ is specified by $\varphi_x$. The restriction of $h$ to components corresponding to vertices which are not in the extended main component is the identity. The restriction of $h$ to remaining components corresponding to vertices of $\Gamma_\varphi$ is characterised by $\varphi_x$ on divisors in the extended main component and the identity on other divisors.
		
		The components $Y_i$ corresponding to all remaining vertices are isomorphic to toric surfaces whose fans have four rays of the form $\pm \rho_1, \pm \rho_2$. After swapping labels we may assume $\rho_1$ points towards the origin. Thus the restriction of $h$ to $O(\rho_1)$ is specified by $\varphi_x$. The restriction of $h$ to $O(\pm \rho_2)$ is the identity. There is a unique torus equivariant automorphism of $Y_i$ which restricts in this way to the toric boundary. 
		
	\end{proof}
	
	\subsection{Points of $\VM$}
	In this section we show points of $\VM$ biject with strongly transverse and stable curves on expansions $X_\Gamma \rightarrow X$.  For $y$ in $\VM$ set $X_y=\varpi^{-1}(y)\rightarrow X$ the associated expansion. 
	
	\begin{theorem}\label{thm:univcurve}
		There is a natural bijection: $$\alpha: \mathrm{Hom}(\mathrm{Spec}(\mathbb{C}),\VM) \rightarrow \mathcal{STS},$$ where $\alpha$ assigns to a point $y$ the curve $${Z}_y={Z}\cap X_y\hookrightarrow X_y\rightarrow X.$$ 
	\end{theorem}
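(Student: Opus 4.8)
The plan is to bootstrap the bijection from Theorem~\ref{thm:AscherMolcho}, which furnishes the corresponding statement for broken toric surfaces, by layering on the combinatorial data of the expansion supplied by Proposition~\ref{prop:tropmod}. Throughout I use the morphism $\rho\colon \VM \to \VSp$ obtained on fans from the subdivision $\VTM$ of $\TR$ followed by $(\varpi^\dagger)^{\mathfrak t}\colon \HH \to \RH$, together with the resulting commuting square of universal families
\begin{center}
\begin{tikzcd}
\VUM \arrow[r] \arrow[d,"\varpi"'] & \R \arrow[d,"\varpi^\dagger"] \\
\VM \arrow[r,"\rho"'] & \VSp
\end{tikzcd}
\end{center}
whose top arrow is induced by the projection $\VTUM \to \VTM\times_{\TSp}\TR \to \TR$ appearing in Construction~\ref{cons:univsubsch}. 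On the fibre over $y$ this arrow restricts to the contraction $q_\Gamma\colon X_y \to X^\dagger_{\rho(y)}$, and since $Z$ is by Construction~\ref{cons:univsubsch} the strict transform of $Z^\dagger$, the curve $Z_y$ is precisely the strict transform of $Z^\dagger_{\rho(y)}$ along $q_\Gamma$.

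To see that $\alpha$ lands in $\mathcal{STS}$, note that for $y$ in the orbit $O(\sigma)$ Lemma~\ref{lem:toricvar} and Proposition~\ref{prop:tropmod} identify $X_y$ with the expansion $X_\Gamma \to X$ attached to the combinatorial type of $f(p)$ for $p \in \operatorname{relint}\sigma$; Theorem~\ref{thm:AscherMolcho} shows $Z^\dagger_{\rho(y)}$ is strongly transverse and stable on the broken toric surface $X^\dagger_{\rho(y)}$, so its strict transform $Z_y$ is strongly transverse and stable by Definition~\ref{def:STS}. For surjectivity I would start from a class $[C \hookrightarrow X_\Gamma \to X]$, present $C$ via Definition~\ref{def:STS} as the strict transform of a strongly transverse and stable curve $C^\dagger$ on $X_{\Gamma_\varphi}$, invoke Theorem~\ref{thm:AscherMolcho} to obtain the unique $x \in \VSp$ with $Z^\dagger_x \cong C^\dagger$, and read off from Proposition~\ref{prop:tropmod} the cone $\sigma$ of $\VTM$ recording the combinatorial type of $\Gamma$. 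Since $\sigma$ lies over the cone of $\TSp$ containing $x$, the orbit $O(\sigma)$ surjects onto the orbit of $x$ under $\rho$, and I would select the point $y \in O(\sigma) \cap \rho^{-1}(x)$ whose curve matches $C$, verifying $\alpha(y) = [C]$ through the commuting square.

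For injectivity, suppose $\alpha(y_1) = \alpha(y_2)$ and let $g$ realise the isomorphism: an automorphism of the expansion fitting in the commutative triangle over $X$ of Definition~\ref{def:STS} and carrying $Z_{y_1}$ to $Z_{y_2}$. Because $g$ commutes with the maps to $X$ it descends through the contractions $q_\Gamma$ to an isomorphism $Z^\dagger_{\rho(y_1)} \cong Z^\dagger_{\rho(y_2)}$ fixing each component, and the uniqueness clause of Theorem~\ref{thm:AscherMolcho} forces $\rho(y_1) = \rho(y_2)$. The combinatorial types must also agree: an isomorphism over $X$ respects the superimposition of the fan of $X$ underlying Construction~\ref{cons:UM}, so by the bijection between combinatorial types and cones in Proposition~\ref{prop:tropmod} the points $y_1, y_2$ lie in a common orbit $O(\sigma)$. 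What remains is to pin down the position within $O(\sigma)$, and here the decisive point is that the isomorphisms permitted by Definition~\ref{def:STS} must lie \emph{over} $X$: a translation by the dense torus of $X$ does not commute with the map to $X$, so distinct points of a single orbit genuinely give non-isomorphic curves.

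The step I expect to be the main obstacle is exactly this last bookkeeping in injectivity, together with the matching half of surjectivity: one must show that the automorphisms of an expansion over $X$ fixing every component and preserving a strongly transverse and stable curve are precisely those already folded into the fan $\VTM$, so that each isomorphism class meets $O(\sigma) \cap \rho^{-1}(x)$ in a single $\mathbb{C}$-point. This is where the final subdivision of Construction~\ref{cons:M} — along the locus where a vertex of the tropical curve meets a ray of the fan of $X$ — does the essential work, since it is exactly what makes combinatorial types biject with cones and thereby removes any residual automorphisms unaccounted for by the toric structure.
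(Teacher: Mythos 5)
Your skeleton is the paper's: factor through the map $\rho\colon\VM\to\VSp$, settle the secondary-fan direction by Theorem~\ref{thm:AscherMolcho}, and handle the two-dimensional fibre direction combinatorially; your well-definedness paragraph is exactly Lemma~\ref{lem:fibreright}. But two steps you lean on are genuinely missing. The smaller one: Theorem~\ref{thm:AscherMolcho} parametrises stable toric \emph{morphisms} to $\mathbb{P}^m$, not curves, so ``invoke Theorem~\ref{thm:AscherMolcho} to obtain the unique $x\in\VSp$ with $Z^\dagger_x\cong C^\dagger$'' is not yet available. One must first show that a strongly transverse and stable curve determines, up to automorphism, the stable toric morphism cutting it out. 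This is the paper's Lemma~\ref{lem:uniqueyrub}: the cone of $\TSp$ is pinned down by reading off the lattice polytopes of the line bundles $L_i=\mathcal{O}(C_i)$ on the components $X_i$, and the morphism is then reconstructed componentwise from the explicit equation of $C_i$ on the dense torus. A priori a curve could be cut out by two essentially different morphisms; this argument rules that out, and your appeals to ``the uniqueness clause'' in both surjectivity and injectivity silently use it.

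The larger gap is the one you yourself flag as ``the main obstacle'' and then resolve only by asserting that the final subdivision in Construction~\ref{cons:M} ``does the essential work.'' By itself it does not: that subdivision only makes combinatorial types constant on, and bijective with, cones (Remark~\ref{rem:whyneedsubdiv}); it shows neither that every translation-marking is realised by a point of the fibre (your surjectivity step ``select the point $y\in O(\sigma)\cap\rho^{-1}(x)$ whose curve matches $C$'' is circular as written), nor that each class in $\mathcal{STS}$ meets that fibre in exactly one point. The paper closes this with an explicit geometric identification rather than a separate injectivity/surjectivity check: it introduces the augmented tropical curve $\Gamma^{\mathsf{aug}}_\varphi$, obtained by superimposing the inverted fan $\Sigma_X'$ at every vertex of $\Gamma_\varphi$, proves that the fibre of $\rho$ over $y^\dagger\in O(\sigma)$ is the associated surface $X^{\mathsf{aug}}_{\Gamma_\varphi}$, and so exhibits a point of $\VM$ as a pair $(C^\dagger,p)$ with $p\in X^{\mathsf{aug}}_{\Gamma_\varphi}$. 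The final lemma then converts $p$ into precisely the two missing data: the stratum $\gamma$ containing $p$ records the combinatorial type of the superimposition, hence the expansion up to toric automorphism of $X$, while the position of $p$ within $\gamma$ records the preimage of the identity of the dense torus of $X$ --- the rigidifying datum your injectivity paragraph gestures at. Even this conversion is delicate: it runs through the common refinement $\mathfrak{P}_{\tilde{\Gamma}}$ with its maps $\pi_1,\pi_2$, and requires checking that the auxiliary choice of point in $\pi_2\left(\pi_1^{-1}(p)\right)$ does not change the class in $\mathcal{STS}$. Your observation that a torus translation fails to commute with the structure map to $X$, so that Definition~\ref{def:STS} distinguishes points within one stratum, is the correct mechanism --- but stating it does not substitute for constructing the bijection between the fibre and the set of markings, which is where the proof actually lives.
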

	We first check $\alpha$ is well defined with Lemma \ref{lem:fibreright}. 
	\begin{lemma}\label{lem:fibreright}
		The curve ${Z}_y={Z}\cap X_y\hookrightarrow X_y$ is strongly transverse and stable.
	\end{lemma}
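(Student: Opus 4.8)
The plan is to reduce the statement to the $\dagger$--world of Theorem~\ref{thm:AscherMolcho}, where strong transversality and stability is defined, by restricting the global constructions to a single fibre of $\varpi$. Write $\psi\colon\VUM\to\R$ for the toric morphism induced by the composition $\VTUM\to\VTM\times_{\TSp}\TR\to\TR$ appearing in Construction~\ref{cons:univsubsch}; on fans it is the second projection $(\varphi,\phi)\mapsto\phi$. Because $\varphi-\phi\in N_X$ on the fibre product, the image $\bar\phi$ of $\phi$ in $\RH=\HH/N_X$ agrees with the image $\bar\varphi$ of $\varphi=\varpi^\mathfrak{t}(\varphi,\phi)$, so $\varpi^\dagger\circ\psi$ factors as $\VUM\xrightarrow{\varpi}\VM\to\VSp$, the last map being induced by the quotient $\HH\to\RH$. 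Hence $\psi$ carries the expansion $X_y=\varpi^{-1}(y)$ into a single broken toric surface $X^\dagger_x=(\varpi^\dagger)^{-1}(x)$, where $x\in\VSp$ is the image of $y$.

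First I would pin down this restriction combinatorially. By Proposition~\ref{prop:tropmod}(2) the fibre $X_y$ carries the polyhedral structure of the pre--expansion tropical curve $\Gamma=f(y)$, obtained by superimposing the tropical curve $\Gamma_\varphi$ dual to $\Delta$ onto $\TX$, while by Lemma~\ref{lem:auxcomb} the fibre $X^\dagger_x$ carries the polyhedral structure of $\Gamma_\varphi$ alone. Since $\mathfrak{P}_\Gamma$ is the common refinement of $\mathfrak{P}_{\Gamma_\varphi}$ and $\mathfrak{P}_{\TX}$, the cone $C_\Gamma^\mathfrak{t}$ subdivides $C_{\Gamma_\varphi}^\mathfrak{t}$, and I would check that $\psi|_{X_y}$ is precisely the canonical morphism $q_\Gamma\colon X_\Gamma\to X_{\Gamma_\varphi}$ of Construction~\ref{cons:expansions}.

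On the $\dagger$--side the curve $Z^\dagger_x=Z^\dagger\cap X^\dagger_x$ is already strongly transverse and stable on $X_{\Gamma_\varphi}$: the restriction $\pi_{\mathbb{P}^m}|_{X^\dagger_x}$ is a finite stable toric morphism by Theorem~\ref{thm:AscherMolcho} (finiteness as used in Lemma~\ref{lem:strongtransstab}), and $Z^\dagger_x$ is by construction the pullback of $\sum_{i,j}X_{i,j}$ along it, so Definition~\ref{def:STS} applies verbatim. It then remains to identify $Z_y=Z\cap X_y$ with the strict transform of $Z^\dagger_x$ along $q_\Gamma$. As $Z$ is by definition the strict transform of $Z^\dagger$ along $\psi$, this is a compatibility between strict transform and restriction to the fibre: forming $Z$ and intersecting with $X_y$ ought to coincide with restricting $Z^\dagger$ to $X^\dagger_x$ and then taking the $q_\Gamma$--strict transform. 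Granting this, the second clause of Definition~\ref{def:STS} exhibits $Z_y$ as strongly transverse and stable.

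The main obstacle is this last compatibility, which is not formal: $\psi$ has two--dimensional fibres and is far from birational, whereas $q_\Gamma$ is birational, so strict transforms need not commute with base change. I would instead argue transversality of $X_y$ to the whole construction. Writing $Z=\overline{\psi^{-1}(Z^\dagger\cap T_\R)}$ for the strict transform as the closure of the preimage of the dense torus part, the point is that $Z$ is built away from the toric boundary introduced by the refinement $\VTUM\to\VTM\times_{\TSp}\TR$, while $Z^\dagger_x$ meets neither the codimension--two strata nor any one--dimensional stratum of $X^\dagger_x$, exactly as established in the proof of Lemma~\ref{lem:strongtransstab}. These two facts should let me compute $Z\cap X_y$ on the dense torus of each irreducible component of the expansion $X_y$ and pass to closures, matching it componentwise with the $q_\Gamma$--strict transform of $Z^\dagger_x$ and completing the proof.
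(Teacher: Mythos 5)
Your proposal takes essentially the same route as the paper: its proof likewise observes that $Z$ is by construction the strict transform of $Z^\dagger$, deduces fibrewise that $Z_y \rightarrow X_\Gamma \rightarrow X$ is the strict transform of $Z_y^\dagger \rightarrow X_{\Gamma_\varphi}$ under $q_\Gamma$, and concludes via Definition~\ref{def:STS} together with the dagger-side input (Theorem~\ref{thm:AscherMolcho}, as used in Lemma~\ref{lem:strongtransstab}). The only difference is one of care: the paper asserts the fibrewise strict-transform compatibility in a single line, whereas you correctly identify it as the nontrivial step and sketch the dense-torus/closure argument, resting on the transversality of $Z^\dagger_x$ established in the proof of Lemma~\ref{lem:strongtransstab}, that justifies it.
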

	
	\begin{proof}[Proof of Lemma \ref{lem:fibreright}]
		In Construction \ref{cons:univsubsch} the subscheme ${Z}$ was defined to be the strict transform of ${Z}^\dagger$ under $\pi_{\mathbb{P}^m}$. Fibre--wise this implies ${Z}_y\rightarrow X_\Gamma \rightarrow X$ is the strict transform of ${Z}_y^\dagger\rightarrow X_{\Gamma_{\varphi}}$ under $q_\Gamma$. Lemma \ref{lem:strongtransstab} shows the latter curve is strongly transverse and stable.
	\end{proof}
	We will deduce Theorem \ref{thm:univcurve} from a similar result for $\mathrm{Hom}(\mathrm{Spec}(\mathbb{C}),\VSp)$ recorded in Lemma~\ref{lem:uniqueyrub}. Fix a strongly transverse and stable curve $C^\dagger \rightarrow X_{\Gamma_{\varphi}}$. 
	\begin{lemma}\label{lem:uniqueyrub}
		There is a unique $\mathbb{C}$ point $y^\dagger$ of $\VSp$ such that the strongly transverse and stable curve $$Z^\dagger\cap \kappa^{-1}(y^\dagger)=C_{y^{\dagger}} \hookrightarrow {X}_{y^\dagger} = \kappa^{-1}(y^\dagger)$$ coincides with $C^\dagger \hookrightarrow X_{\Gamma_{\varphi}}$.
	\end{lemma}
	\begin{proof}
		Theorem \ref{thm:AscherMolcho} shows such a point $y^{\dagger}$ exists. We check $y^\dagger$ is unique.
		
		Recall the surface $X_{\Gamma_\varphi}$ has irreducible components $X_i$. Set $C_i = C^\dagger \cap X_i$ and observe $C_i$ defines an ample line bundle $L_i$ on $X_i$. This line bundle specifies a lattice polytope up to translation. A subdivision of $\mathrm{supp}(\Delta)$ with dual tropical curve $\Gamma_{\varphi}$ assigns a line bundle to each vertex of $\Gamma_{\varphi}$. There is a unique subdivision of $\mathrm{supp}(\Delta)$ giving rise to $X_{\Gamma_{\varphi}}$ equipped with the line bundle $L_i$ on $X_i$ for each $i$. Such a subdivision specifies the cone $\sigma$ of $\TSp$. We have thus identified $\sigma$ for which $y^\dagger$ in $O(\sigma)$.
		
		Theorem \ref{thm:AscherMolcho} shows that points of $O(\sigma)$ biject with stable toric morphisms $X_{\Gamma_{\varphi}}\rightarrow \mathbb{P}^m$ where $\varphi$ lies in the interior of $\sigma$. We identify the restriction of the stable toric morphism corresponding to $y^\dagger$ to $X_i$. Chasing definitions we may write
		$$C_i = V\left(\sum_{(i,j) \in \Delta_i}\lambda_{i,j} x^iy^j\right).$$
		
		Since $C_i$ is the pullback of $\sum_{i,j}X_{i,j}$ under the morphism $f_{y^{\dagger}}$ necessarily $f_{y^{\dagger}}$ restricted to the dense torus of $X_i$ is
		$$ [f_{y^{\dagger}}(x,y)]_{k,l} = \begin{cases}
		\lambda_{k,l}x^ky^l \quad &(k,l) \in \Delta_i \\
		0 \quad \quad \quad \quad  &(k,l) \notin \Delta_i
		\end{cases}.
		$$
		In this way we have specified a morphism $f_{y^\dagger}$ corresponding to $C^\dagger$. Applying this construction to two isomorphic curves $C^\dagger_1\hookrightarrow X_{\Gamma_\varphi}$ and $C^\dagger_2\hookrightarrow X_{\Gamma_\varphi}$ arising from points $y_1^\dagger, y_2^\dagger$ of $\VSp$ yields two maps $f_{y^\dagger_1},f_{y^\dagger_2}$ which fit into a commutative triangle $$
		\begin{tikzcd}
		X_{\Gamma_\varphi} \arrow[rd, "f_{y^\dagger_2}"] \arrow[d] &              \\
		X_{\Gamma_\varphi} \arrow[r, "f_{y^\dagger_1}"']           & \mathbb{P}^m
		\end{tikzcd}$$ where the vertical morphism is an isomorphism. Thus $y^\dagger_1$ and $y^\dagger_2$ correspond to the same stable toric morphism to $\mathbb{P}^m$ and are the same point.
	\end{proof}
	
	To prove Theorem \ref{thm:univcurve} we write down the inverse to alpha. Thus given a strongly transverse and stable curve $$\iota_{y^\dagger}:C \hookrightarrow X_\Gamma \rightarrow X$$ we must specify the corresponding $\mathrm{Spec}(\mathbb{C})$ point $y$ of $\VM$. 
	
	\begin{proof}
		By definition there is a strongly transverse and stable curve $C^\dagger \hookrightarrow X_{\Gamma_{\varphi}}$ such that $C$ is pulled back along $X_\Gamma \rightarrow X_{\Gamma_\varphi}$. By Lemma \ref{lem:uniqueyrub} we know the image $y^\dagger$ of $y$ under $\nu$. Torus orbits in the preimage under $\nu$ of the torus orbit of $y^\dagger$ biject with strata of $\mathfrak{P}_{\Gamma_\varphi^\mathrm{aug}}$. The combinatorial type of $\Gamma$ specifies the stratum of $\mathfrak{P}_{\Gamma_\varphi^\mathrm{aug}}$ corresponding to the torus orbit in which $y$ lies.
		
		We have thus identified a single torus orbit $O(\sigma)$ in which $y$ can lie. More precisely we know $y$ lies in $W_y = \nu^{-1}(y^\dagger)\cap O(\sigma)$. The inclusion $N_X\hookrightarrow N$ determines an action of $X^o$ on $W_y$ with respect to which the map $$\alpha: \mathrm{Hom}(\mathrm{Spec}(\mathbb{C}), W_y )\rightarrow \mathcal{STS}_\Gamma(\iota_{y^\dagger})$$ is $X^o$ equivariant. The action on the target is transitive and the stabilisers coincide by Proposition~\ref{prop:stabilisers} and Remark~\ref{rem:torusact}. Thus the map is a bijection.
	\end{proof}
	
	\section{Tautological classes as Minkowski weights and first computations}\label{sec:computations}
	
	The operational Chow ring of a toric variety is presented in combinatorial terms in \cite{IntToricVar}. We use this dictionary to study tautological integrals on $\M$ in terms of the geometry of the the fan $|\Delta|$. The complexity in the fan $|\Delta|$ arises from the secondary fan from which it is built. 
	
	In Section \ref{sec:one} we constructed the family $\varpi:\UM_0 \rightarrow \M$ as a morphism of toric stacks and a corresponding morphism of coarse moduli spaces $\tilde{\varpi}:\VUM \rightarrow \VM$. Tautological insertions are defined in terms of $\varpi$. Our approach is to pull back intersections performed on the coarse moduli space: that is we study $\tilde{\varpi}$. We rely on general notions from toric geometry; see \cite{Fulton+2016} for background.
	
	An element $B$ in the Chow group $A_k(\VM)$ may be expressed  $$B=\sum_{\tau} \lambda_\tau [V(\tau)].$$ Here $V(\tau)$ is the closure of the torus orbit $O(\tau)$ associated to a cone $\tau$ in the fan $|\Delta|$ under the toric dictionary. Thus to understand the action of an insertion on any homology class it suffices to understand its action on $V(\tau)$ for all cones $\tau$ of $|\Delta|$.
	
	The operational Chow ring of a toric variety $A^\star(X)$ is isomorphic to the ring of Minkowski weights on the fan of $X$. An element of the Chow group $A_\star(X)$ defines an element of the operational Chow ring via the intersection product \cite{Fulton}. Observe $A_\star(X)$ is a module over $A^\star(X)$. To compute products of Minkowski weights, or their action on elements of the Chow group we apply a fan displacement rule \cite[Theorem 4.2]{IntToricVar}. The following definition is useful in discussing these techniques.
	\begin{definition}
		Let $\Sigma$ be a fan. Consider a triple $(\rho, \tau, v)$ where $v$ lies in the support $|\Sigma|$ and both $\rho$ and $\tau$ are cones of $\Sigma$. Such a triple is \textit{good} if $\rho  \cap (\tau+v) \ne \emptyset$.
	\end{definition}
	
	Recall the fan $\VTM$ is a subdivision of the fan $\TPN$ of the linear system $|\beta|$. The cone of $\TPN$ in which a function $\varphi$ in $N_\mathbb{R}$ lies is specified by which points of $\Delta$ the function $\varphi$ adopts its minimal value. We denote the cone in $\TPN$ of functions adopting their minimal value on $S \subset \Delta$ by $\delta_S$. We partition the cones of $\VTM$ according to which cone of $\TPN$ they lie within. The set of cones in $\VTM$ lying within $\delta_S$ is denoted $D_S$. The set $|D_S|\subset N_\mathbb{R}$ is the union of the supports of the cones in $D_S$.
	
	We will denote the cocharacter lattice in which $\VTUM$ sits as $$N_{\VUM} = N\times_{\mathscr{N}}N.$$
	
	\subsection{Point insertions}\label{sec:two:ptinsert}
	
	Consider the universal diagram $$
	\begin{tikzcd}
	{\mathcal{Z}} \arrow[r, hook] & \UM_0 \arrow[d, "\varpi"] \arrow[r, "\pi_X"] & X                        \\
	& \M \arrow[r, "\mathrm{ev}"']           & \mathrm{Hilb}^{\mathrm{log}}_\beta(\partial X)
	\end{tikzcd}.$$ Let $[\mathrm{pt}]$ be the class in $A^\star(X)$ defined by intersection with a generic point. A point insertion is an element $\tau_0([\mathrm{pt}])$ in $A^1(\M)$ defined to act on a class $B$ in the Chow group $A_\star(\M)$ as  $$\tau_0([\mathrm{pt}])(B)=\varpi_{\star}(c_1(\mathcal{J}_\mathcal{Z})\cap \pi_X^\star([pt])\cap \varpi^\star(B)).$$ Here $\mathcal{J}_\mathcal{Z}$ is the ideal sheaf of the universal subscheme $\mathcal{Z}$ inside $\mathcal{X}_0$. In this section we prove Proposition \ref{thm:identifyminkwt} identifying $\tau_0([pt])$ with the pullback of the hyperplane class on $\mathbb{P}^m$. We use this understanding to compute $\tau_0([pt])^k V(\tau)$ for cones $\tau$ of codimension $k$.
	
	Our approach is to work on the level of coarse moduli spaces. Let $\mathcal{J}_{Z}$ be the ideal sheaf of the universal subscheme ${Z}$ inside $\VUM$. We start by understanding the class $c_1(\mathcal{J}_{Z})\cup \pi_X^\star([pt])$ in $A^3(\VUM)$ as a Minkowski weight.
	
	\begin{definition}
		For $\sigma$ a cone of $\VTM$ we define a cone in the fan $X_{\mathrm{uni}}^\mathfrak{t}$ by $$\sigma_{eq} = \{(\varphi,\varphi)| \varphi \in |\sigma|\}.$$ Let $\hat{\sigma}$ be the subset of $\mathrm{star}(\sigma_{eq})$ consisting of cones with dimension $\mathrm{dim}(\sigma) +2$ whose support are contained in $\varpi^{-1}(\sigma)$.
		
		Define $n_{\{(ij),(kl)\}}$ the lattice length of the line segment with ends $(i,j)$ and $(k,l)$. For $\sigma$ a maximal cone of $D_{\{(i,j),(k,l)\}}$ define $n_\sigma = n_{\{(i,j),(k,l)\}}$
	\end{definition}
	
	\begin{lemma}\label{lem:cup}\hfill
		\begin{enumerate}[(1)]
			\item The Chern class $c_1(\mathcal{J}_{Z})$ corresponds to the following Minkowski weight on cones of codimension one
			$$c_1(\mathcal{J}_Z)(\sigma) = \begin{cases}
			n_\sigma \quad &\varpi^\mathfrak{t}(\sigma) \in D_{\{(i,j),(k,l)\}} \quad \{(i,j),(k,l)\} \subset \Delta \\
			0 \quad \quad  &\mathrm{else} 
			\end{cases}.$$
			\item The class $\pi_X^\star([pt])$ corresponds to the following Minkowski weight on cones of codimension two:
			$$c(\sigma)=\begin{cases}
			1 \quad \textrm{if }\sigma = \tau_{eq}\textrm{ for some cone }\tau \\
			0 \quad \mathrm{else}
			\end{cases}.
			$$
			\item The class $c_1(\mathcal{J}_Z) \cup \pi_X^\star([pt])$ is represented by the following Minkowski weight on cones of codimension three:
			$$(c_1(\mathcal{J}_Z) \cup \pi_X^\star([pt]))(\sigma) = \begin{cases}
			n_\sigma \quad &\textrm{if }\sigma= \tau_{eq} \textrm{ for } \tau \in D_{\{(i,j),(k,l)\}} \{(i,j),(k,l)\} \subset \Delta\\
			0 \quad &\mathrm{else} 
			\end{cases}.$$
		\end{enumerate}
	\end{lemma}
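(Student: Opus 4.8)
The plan is to compute the three classes via the Fulton--Sturmfels dictionary \cite{IntToricVar}: a class $c \in A^k$ of a complete toric variety is the Minkowski weight $\sigma \mapsto \deg(c \cap [V(\sigma)])$ on the codimension $k$ cones, for which $V(\sigma)$ has dimension $k$. On $X_{\mathsf{uni}}^\mathfrak{t}$, of dimension $m+2$, the cones relevant to (1), (2), (3) thus have dimensions $m+1$, $m$, $m-1$, with $V(\sigma)$ of dimension $1$, $2$, $3$ respectively. I would establish (1) and (2) directly and then deduce (3) from them through the fan displacement rule \cite[Theorem 3.2]{IntToricVar}.

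For (1), note that $Z$ is an effective Cartier divisor (the universal curve is of codimension one in the total space), so $\mathcal{J}_Z = \mathcal{O}(-Z)$ and $c_1(\mathcal{J}_Z)(\sigma) = -\deg(\mathcal{O}(Z)|_{V(\sigma)})$ on each codimension one cone $\sigma$. The decisive feature is that $Z$ is the \emph{strict transform} of $Z^\dagger = \{s'=0\}$ under the subdivision of Construction \ref{cons:univsubsch}, not the total transform. By Lemma \ref{lem:auxcomb} the tropicalisation of $Z$ in the fibre over $\varphi$ is the tropical curve $\Gamma_\varphi$ dual to $\Delta$, and the edge of $\Gamma_\varphi$ dual to the segment $[(i,j),(k,l)]$ carries weight equal to its lattice length $n_{\{(i,j),(k,l)\}}$, by the duality between $\Delta$ and the tropicalisation of $\sum X_{i,j}$ used in the proof of Lemma \ref{lem:strongtransstab} and \cite{MaclaganSturmfels}. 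For a cone $\sigma$ with $\varpi^\mathfrak{t}(\sigma) \in D_{\{(i,j),(k,l)\}}$, the curve $V(\sigma)$ is the corresponding boundary $\mathbb{P}^1$ along which the strict transform has negative intersection, $\deg(\mathcal{O}(Z)|_{V(\sigma)}) = -n_\sigma$, so $c_1(\mathcal{J}_Z)(\sigma) = n_\sigma$; every other codimension one $\sigma$ lies outside $\mathrm{trop}(Z)$, so $V(\sigma)$ avoids $Z$ and the weight vanishes. In practice I would make this uniform by presenting $Z$ as the Cartier divisor whose support function is the tropical polynomial of $\sum X_{i,j}$ and reading off the wall-crossing of slopes, the change across the wall dual to $[(i,j),(k,l)]$ being its lattice length. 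Correctly accounting for the strict transform — in particular the sign — is the step I expect to be the main obstacle.

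For (2), the point class $[\mathsf{pt}] \in A^2(X)$ is the codimension two Minkowski weight equal to $1$ on the cone $\{0\}$ of $X^\mathfrak{t}$. I would pull it back along the morphism of fans $\pi_X^\mathfrak{t}\colon N_{X_{\mathsf{uni}}} \to N_X$, $(\varphi,\phi) \mapsto \varphi - \phi$, of Construction \ref{cons:UM}. A codimension two cone maps into $\{0\}$ exactly when it lies in $\ker \pi_X^\mathfrak{t}$, the diagonal $\{(\varphi,\varphi)\}$; among codimension two cones these are precisely the $\tau_{eq}$ with $\tau$ a maximal cone of $\mathsf{PT}_0(X,\beta)^\mathfrak{t}$. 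That the pulled-back weight is $1$ rather than some larger index follows from $N_{X_{\mathsf{uni}}} = N \times_{N^\dagger} N$ with $e_x \mapsto (1,0)$, $e_y \mapsto (0,1)$, which make $0 \to N \to N_{X_{\mathsf{uni}}} \to N_X \to 0$ split exact with the diagonal saturated. This gives $c(\sigma) = 1$ on $\tau_{eq}$ and $0$ otherwise.

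For (3), I would feed the weights from (1) and (2) into the fan displacement rule. Evaluating $c_1(\mathcal{J}_Z) \cup \pi_X^\star([\mathsf{pt}])$ on a codimension three cone $\gamma$ is a sum over good triples pairing a codimension one cone weighted by $c_1(\mathcal{J}_Z)$ with a codimension two cone weighted by $\pi_X^\star([\mathsf{pt}])$ whose generic translate meets along $\gamma$. As the second weight lives on diagonal cones and the first on cones over the walls $\delta_{\{(i,j),(k,l)\}}$, the only contributing $\gamma$ are the diagonal cones $\tau_{eq}$ with $\tau \in D_{\{(i,j),(k,l)\}}$; the displacement index is again $1$ by the splitting in (2), so the product weight is the $n_\sigma$ carried over from (1). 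With (1) and (2) in hand this last step is a purely combinatorial check, so the genuine difficulty is confined to the strict-transform computation of part (1).
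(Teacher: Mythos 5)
In parts (2) and (3) you do essentially what the paper does: pull the point weight back along $\pi_X^\mathfrak{t}$ via \cite[Proposition 2.7]{IntToricVar} (your check that the relevant lattice index is $1$, via the splitting $0 \to N \to N_{X_\mathsf{uni}} \to N_X \to 0$ with saturated diagonal, is a welcome extra detail the paper leaves implicit) and then compute the cup product by the fan displacement rule \cite[Proposition 3.1]{IntToricVar}. The defect is in part (1), and it is genuine. Having set $\mathcal{J}_Z = \mathcal{O}(-Z)$, you need $\deg(\mathcal{O}(Z)|_{V(\sigma)}) = -n_\sigma$ to land on the stated weight $+n_\sigma$, and you justify the sign by appeal to $Z$ being a strict transform. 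That step is false. $Z$ is an \emph{effective} Cartier divisor, and by your own tropical description (equivalently, by Lemma \ref{lem:strongtransstab}) no one-dimensional boundary stratum $V(\sigma)$ is contained in $Z$: the scheme-theoretic intersection is finite. For an effective divisor meeting a complete curve properly, $\deg(\mathcal{O}(Z)|_{V(\sigma)})$ \emph{equals} the length of $Z \cap V(\sigma)$, hence is $+n_\sigma \geq 0$; a negative degree would force $V(\sigma) \subseteq \operatorname{Supp}(Z)$, contradicting the transversality you invoke two sentences earlier. Passing to the strict rather than total transform only changes \emph{which} strata meet $Z$ --- this is exactly how the strata introduced by the subdivision in Construction \ref{cons:univsubdivnofix} acquire weight $0$ --- it cannot make a proper intersection negative. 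So your argument, taken literally, proves the weight is $-n_\sigma$ under your sign convention, contradicting the lemma; the "negative intersection" claim is the internally inconsistent patch. (Here and below read Construction \ref{cons:univsubsch} for the universal subscheme.)

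The resolution is a convention, not a computation: the paper's proof explicitly takes $c_1(\mathcal{J}_Z)$ to be the operational class of \emph{intersection with the subscheme $Z$} (i.e. the class of $[Z]$, not $-[Z]$), whereupon the weight on a codimension-one cone $\sigma$ is simply the length of the transverse intersection of $Z$ with the corresponding one-dimensional boundary stratum. The paper then runs the case analysis you compress into "every other $\sigma$ lies outside $\mathrm{trop}(Z)$": among strata contracted by $\varpi$, those present in the fibre as pre-expansion boundary components meet $Z$ in length $n_{\{(i,j),(k,l)\}}$ while those created by the subdivision miss $Z$; strata \emph{not} contracted by $\varpi$ are fibrewise codimension-two strata of pre-expansions, which $Z$ avoids by strong transversality. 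Your lattice-length computation of the multiplicity via Lemma \ref{lem:auxcomb} and \cite{MaclaganSturmfels} is sound and matches the paper's $n_\sigma$; once you delete the sign manoeuvre and adopt the paper's stated convention for $c_1(\mathcal{J}_Z)$, part (1) goes through, and with it the rest of your proof.
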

	
	\begin{proof}\hfill
		\begin{enumerate}[(1)]
			\item The class $c_1(\mathcal{J}_Z)$ is the operational Chow class associated to intersection with the subscheme $Z\subset X_\mathrm{uni}$. This subscheme $Z$ is codimension one and by Lemma \ref{lem:strongtransstab} intersects the boundary transversely. It suffices to determine the length of the zero dimensional subscheme of intersection of $Z$ with each boundary stratum of dimension one. The tropical cones corresponding to such strata have codimension one. 
			
			There are two flavours of dimension one strata. First suppose this dimension one stratum is taken to a point under $\varpi$. In this case we think of our stratum as a component of the boundary in a pre--expansion. It is possible this stratum was introduced in subdividing $\TR$ to obtain $|\Delta|$. In this case $Z$ does not intersect the corresponding boundary component and our weight adopts the value zero. If this possibility does not occur then the tropical cone assigned to the stratum is codimension one with support consisting of pairs $(f,g)$ with $g$ in $D_{\{(i,j),(k,l)\}}$ where the function $g$ adopts its minimal value twice. The subscheme $Z$ intersects such a stratum in a subscheme of length $n_{\{(i,j),(k,l)\}}$. 
			
			The second flavour of dimension one strata are those not taken to a point under $\varpi$. Intersecting such a strata with a fibre gives a codimension two stratum in a pre--expansion. The subscheme $Z$ intersects the boundary of each fibre in the expected dimension by Lemma \ref{lem:fibreright} so never intersects such strata.
			
			\item A Minkowski weight may be pulled back along a surjective morphism to a Minkowski weight by \cite[Proposition 3.7]{IntToricVar}. Observe $\pi_X^\mathfrak{t}(N_{X_\mathrm{uni}})=N_X$ and the Minkowski weight corresponding to the operational class $[\mathrm{pt}]$ adopts value $1$ on the zero cone and $0$ elsewhere.
			\item This is an application of \cite[Proposition 4.1 (b)]{IntToricVar} to compute the cup product. All choices of fan displacement vector give the same weight.
		\end{enumerate}
	\end{proof}
	A technical proposition is required to prove Proposition~\ref{thm:identifyminkwt}.
	
	\begin{proposition}\label{prop:cuprod} 
		Let $\sigma$ be a cone of dimension $k$ in $|\Delta|$ and $V(\sigma)$ the associated torus orbit closure in the coarse moduli space of the logarithmic linear system. Suppose $B$ in $A_{m-k}(\M)$ satisfies $$F_{\M\star}(B)= V(\sigma).$$ Let $v$ be a fixed generic point in $N_{\VUM}$ and $\tau$ a cone of $\VTM$. Define the following integers $$\lambda_\tau = \sum [N:N_{\rho}+N_{\tau}].$$ The sum is over good triples $(\rho_\mathrm{eq},\nu,v)$ where:
		
		\begin{itemize}
			\item  $\kappa$ lies in $\hat{\tau}$
			\item $\sigma_\mathrm{eq}$ is a face of both $\rho_\mathrm{eq}$ and $\kappa$ 
			\item $\rho$ a maximal cone of $D_{\{(i,j),(k,l)\}}$ for some $(i,j),(k,l)$ points of $\Delta$.
		\end{itemize}	
		With this setup one has the following equality in the Chow group of $\VM$
		$$F_{\M\star}(\tau_0([pt])B) = \sum_{\sigma\textrm{ facet of }\tau} \lambda_\tau [V(\tau)].$$
		
	\end{proposition}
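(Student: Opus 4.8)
The plan is to move the whole computation onto the coarse moduli space $\VM$, where $\tau_0([\mathsf{pt}])$ becomes a product of Minkowski weights evaluated by the Fulton--Sturmfels fan displacement rule \cite[Theorem 3.2]{IntToricVar}. First I would push the defining formula forward along $F_{\M}$. Since $\mathcal{Z}$ is the strict transform of $Z$ and $F_{\UM}, F_{\M}$ are isomorphisms over the dense tori, the operational classes descend: $c_1(\mathcal{J}_{\mathcal{Z}}) = F_{\UM}^\star c_1(\mathcal{J}_{Z})$ and $\pi_X^\star([\mathsf{pt}])$ factors through $F_{\UM}$. Applying the projection formula, flat base change for the square \eqref{square} (so that $F_{\UM\star}\varpi^\star = \tilde{\varpi}^\star F_{\M\star}$), and the hypothesis $F_{\M\star}(B)=V(\sigma)$ yields
\[
F_{\M\star}(\tau_0([\mathsf{pt}])B) = \tilde{\varpi}_\star\big((c_1(\mathcal{J}_{Z})\cup\pi_X^\star([\mathsf{pt}]))\cap\tilde{\varpi}^\star[V(\sigma)]\big).
\]
The weight $w := c_1(\mathcal{J}_{Z})\cup\pi_X^\star([\mathsf{pt}])$ is already computed in Lemma \ref{lem:cup}(3): it is the codimension-three Minkowski weight supported on the cones $\rho_\mathsf{eq}$ with $\rho$ a maximal cone of some $D_{\{(i,j),(k,l)\}}$, with value $n_\rho$ there.

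Next I would make the remaining pullback, cap, and pushforward combinatorial. The class $\tilde{\varpi}^\star[V(\sigma)]$ is a flat pullback of an orbit closure along the combinatorially flat map $\tilde{\varpi}$, so it is the explicit cycle on $\VTUM$ supported on the cones over $\sigma$; the factor $\pi_X^\star([\mathsf{pt}])$ inside $w$ is, by Lemma \ref{lem:cup}(2), the fibrewise point class supported on the diagonals $\tau_\mathsf{eq}$, and capping with it isolates the diagonal lift $\sigma_\mathsf{eq}$. Capping the resulting cycle with the codimension-three weight $w$ is the module action of a Minkowski weight on a Chow cycle, evaluated by displacing by the fixed generic vector $v\in N_{\mathcal{X}}$ and summing lattice indices over pairs of cones that meet after displacement \cite[Proposition 3.1, Theorem 3.2]{IntToricVar}. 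A dimension count shows the output lies in $A_{m-k-1}(\VUM)$, hence is supported on cones $\kappa$ of dimension $\dim\sigma+3$ having $\sigma_\mathsf{eq}$ as a face; these are exactly the cones $\kappa\in\hat{\tau}$ as $\tau$ ranges over cones with $\dim\tau = \dim\sigma+1$. The displacement condition that $\rho_\mathsf{eq}$ meet $\kappa+v$, together with $\sigma_\mathsf{eq}$ being a common face of $\rho_\mathsf{eq}$ and $\kappa$, is precisely the good-triple condition of the statement.

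Finally the proper pushforward $\tilde{\varpi}_\star$ sends each $[V(\kappa)]$ to a multiple of $[V(\tilde{\varpi}^\mathfrak{t}(\kappa))]$, nonzero exactly when $V(\kappa)\to V(\tau)$ is generically finite; the same dimension count (now $\dim V(\kappa)=\dim V(\tau)=m-k-1$) guarantees this, with $\tau = \tilde{\varpi}^\mathfrak{t}(\kappa)$ of dimension $\dim\sigma+1$. Collecting the contributing cones by their image $\tau$ produces $\sum_{\sigma\text{ facet of }\tau}\lambda_\tau[V(\tau)]$. Conceptually, the split $w = c_1(\mathcal{J}_Z)\cup\pi_X^\star([\mathsf{pt}])$ is what forces $\sigma$ to be a facet of $\tau$: the point class consumes the two fibre directions of $\tilde{\varpi}$ (making the eventual pushforward finite), while the divisorial class $c_1(\mathcal{J}_Z)$ advances exactly one step in the base from $\sigma$ to an adjacent cone $\tau$ of one higher dimension. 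The auxiliary boundary cones created when subdividing $\mathcal{X}^\dagger$ to form $\M$ carry weight zero by Lemma \ref{lem:cup}(1) and so drop out automatically.

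The hard part will be the lattice-index bookkeeping. The cap product contributes an index $[N:N_{\rho_\mathsf{eq}}+N_\kappa]$ computed in $N_{\VUM}$, while the pushforward contributes the degree $[N_\tau:\tilde{\varpi}^\mathfrak{t}(N_\kappa)]$, and one must show that for each good triple these combine to the single index $[N:N_\rho+N_\tau]$ appearing in $\lambda_\tau$. This should follow from the fact that $\pi_X^\star([\mathsf{pt}])$ trivialises the fibre contribution, so that under the projection $\tilde{\varpi}^\mathfrak{t}\colon N_{\VUM}\to N$ the cones $\rho_\mathsf{eq}, \kappa$ map to $\rho,\tau$ with the two diagonal fibre directions --- already recorded in $\sigma_\mathsf{eq}$ --- accounting for the remaining discrepancy. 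A secondary point needing care is independence of the displacement vector $v$, which holds by \cite[Theorem 3.2]{IntToricVar} but must be reconciled with restricting attention to cones containing $\sigma_\mathsf{eq}$.
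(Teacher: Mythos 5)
Your skeleton is the paper's: transfer the computation to the coarse moduli space via the commuting square \eqref{square}, identify $c_1(\mathcal{J}_Z)\cup\pi_X^\star([\mathsf{pt}])$ with the codimension-three Minkowski weight of Lemma \ref{lem:cup}(3), evaluate the cap product by the Fulton--Sturmfels displacement rule, and push forward along the family map, collecting good triples. The qualitative dimension arguments (the point class consuming the fibre directions, $c_1(\mathcal{J}_Z)$ advancing from $\sigma$ to an adjacent $\tau$, the subdivision-artifact cones dropping out by Lemma \ref{lem:cup}(1)) all match what the paper does.

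However, there is a genuine gap, and it sits exactly where you write ``this should follow from.'' Your description of the pullback of $[V(\sigma)]$ as ``the explicit cycle on $\VTUM$ supported on the cones over $\sigma$'' suppresses its multiplicities. The paper computes
$$F_{\UM\star}(\varpi^\star B)=\sum_{\tau'} \frac{[N:N_\sigma]}{[N_{X_\mathsf{uni}}:N_{\tau'}]}\,[V(\tau')],$$
summed over \emph{all} dimension-$k$ cones $\tau'$ surjecting onto $\sigma$ --- both the diagonal cone $\sigma_{eq}$ and the non-diagonal cones $(f,g)$ with $f\ne g$; the latter contribute zero only because the weight of Lemma \ref{lem:cup}(3) is supported on diagonal cones, which is a statement about the weight, not an automatic consequence of ``capping with the point class isolates the diagonal lift.'' More seriously, the displacement-rule contribution of a good triple is $n_{\rho_{eq}}[N_{\mathcal{X}}:N_{\rho_{eq}}+N_\kappa]$ --- weight \emph{times} index --- yet your final bookkeeping lists only the index $[N:N_{\rho_\mathsf{eq}}+N_\kappa]$ and a pushforward degree $[N_\tau:\tilde{\varpi}^{\mathfrak{t}}(N_\kappa)]$, leaving the factor $n_\rho$ unaccounted for. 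The paper's proof closes precisely because of the exact cancellation $[N_{\mathcal{X}}:N_{\tau'}]/[N:N_\sigma]=n_\rho$ between the weight and the fractional pullback multiplicities, followed by the identification $[N_{\mathcal{X}}:N_{\rho_{eq}}+N_\kappa]=[N:N_\rho+N_\tau]$; moreover it uses the stack-level pushforward, for which $\varpi_\star[V(\kappa)]=[V(\tau)]$ for $\kappa\in\hat{\tau}$ with no extra degree (an artifact of weak semistable reduction), rather than your coarse $\tilde{\varpi}_\star$ with its posited degree. Without the $1/n_\rho$ from the pullback multiplicities your coefficient would come out as $n_\rho[N:N_\rho+N_\tau]$ (possibly further scaled by a pushforward degree), not $\lambda_\tau$, so the step you deferred as ``lattice-index bookkeeping'' is the actual mathematical content of the proposition and must be carried out, not assumed.
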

	
	\begin{proof}
		Since $\varpi$ is flat, chasing definitions we may write: $$F_{\UM,\star}(\varpi^\star B) = \sum_\tau \frac{[N:N_\sigma]}{[N_{X_\mathrm{uni}}:N_\tau]} [V(\tau)].$$ The sum is over $\tau$ such that $\tau$ surjects to $\sigma$ and $\mathrm{dim}(\tau) = \mathrm{dim}(\sigma)$. Such $\tau$ are cones of dimension $k$ consisting of functions $(f,g)$ with $f$ in $\sigma$ and either $f=g$ or $f \ne g$ and $g$ achieves its minimum on three points. If $\tau$ a cone of the latter type then $(c_1(\mathcal{J}_Z) \cup \pi_X^\star([\mathrm{pt}]))\cap V(\tau)=0$; indeed the Minkowski weight in part (3) of Lemma \ref{lem:cup} is supported on cones where $f=g$.
		
		Note for some choice of $a_\kappa$ we may write $$(c_1(\mathcal{J}_{Z}) \cup \pi_X^\star([\mathrm{pt}]))\cap F_{\UM,\star}(\varpi^\star B) = \sum a_\kappa V(\kappa).$$ Further notice  $$\varpi_{\star} [V(\kappa)]= \begin{cases}
		V(\tau) \quad &\kappa \in \hat{\tau}\\
		0 \quad \quad &\mathrm{else.}
		\end{cases}$$
		For $\kappa$ in $\hat{\tau}$ we are left to show $\varpi_\star(a_\kappa V(\kappa))= \sum_{\kappa \textrm{ fixed}} [N:N_{\rho}+N_{\tau}]$. 
		
		For $\kappa$ in $\hat{\tau}$ we compute $a_\kappa$ with \cite[Proposition 4.1]{IntToricVar}. We sum over the contribution of the cones $\rho_{eq}$ on which the Minkowksi weight in part (3) of Lemma \ref{lem:cup} is supported. Having fixed $v$ in $N_{\VUM}$ a contribution to $a_\kappa$ of $n_{\rho_{eq}}[N_{\VUM}:N_{\rho_{eq}}+N_{\kappa}]$ arises from $\rho_{eq}$ when $(\rho_{eq},\kappa,v)$ is a good triple and $\sigma_{eq}$ is a face of both $\rho_{eq}$ and $\kappa$. For such cones $$\frac{[N_{\VUM}:N_\tau]}{[N:N_\sigma]} = n_\rho.$$ Finally observe $[N_{\VUM}:N_{\rho_{eq}}+N_{\kappa}] = [N:N_{\rho}+N_{\tau}]$.
		
		By the projection formula we obtain
		$$F_{\UM,\star}\left((c_1(\mathcal{J}_Z) \cup \pi_\mathcal{X}^\star([\mathrm{pt}]))\cap \varpi^\star B\right) = \sum a_\kappa V(\kappa).$$
		Pushing forward we obtain
		$$\tilde{\varpi}_{\star}F_{\UM,\star}\left((c_1(\mathcal{J}_\mathcal{Z}) \cup \pi_\mathcal{X}^\star([\mathrm{pt}]))\cap \varpi^\star B\right)=\sum_{\sigma\textrm{ facet of }\tau} \lambda_\tau [V(\tau)].$$
		Since the square in Section \ref{sec:flattening} commutes we are done.
	\end{proof}

	Let $[H]$ in $A^1(\mathbb{P}^m)$ be the hyperplane class. It is easy to see that the following Minkowski weight on cones of codimension one corresponds to the class $\pi_{\mathbb{P}^m}^\star([H])$:
	$$
	c(\tau) = \begin{cases}
	1 \quad &\tau\in D_{\{(i,j),(k,l)\}} \textrm{ maximal, for some } (i,j), (k,l) \in \Delta\\
	0 \quad &\mathrm{else}. 
	\end{cases}
	$$
	\begin{theorem}\label{thm:auxtominkwt}
		Suppose an element $B$ of $A_\star(\M)$ pushes forward under $F_{\M}$ to $[V(\sigma)]$. Then we have the following equality in the Chow group $A_\star(\VM)$
		$$F_{\M\star}(\tau_0([\mathrm{pt}])(B)) = c \cap [V(\sigma)].$$ 
	\end{theorem}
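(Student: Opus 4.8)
The plan is to combine Proposition \ref{prop:cuprod} with the explicit Minkowski weight $c$ representing $\pi_{\mathbb{P}^m}^\star([H])$, and verify that the two coincide cone-by-cone. By Proposition \ref{prop:cuprod}, for a class $B$ with $F_{\M\star}(B) = [V(\sigma)]$ one has
$$F_{\M\star}(\tau_0([\mathsf{pt}])(B)) = \sum_{\sigma \textrm{ facet of } \tau} \lambda_\tau [V(\tau)],$$
where $\lambda_\tau$ is the weighted count of good triples $(\rho_\mathsf{eq}, \kappa, v)$ described there. On the other side, $c \cap [V(\sigma)]$ is computed via the module action of the operational ring on the Chow group, which is again governed by a fan displacement rule. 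So the task reduces to matching these two combinatorial expressions.

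First I would unwind $c \cap [V(\sigma)]$ using the same fan displacement formalism of \cite[Theorem 3.2, Proposition 3.1]{IntToricVar} used throughout Section \ref{sec:computations}. The weight $c$ is supported on maximal cones of $D_{\{(i,j),(k,l)\}}$, taking value $1$ there. The cap product with $[V(\sigma)]$ then sums, over cones $\tau$ having $\sigma$ as a facet, a contribution indexed by displaced intersections of $\sigma$ against the support of $c$. The key observation is that the combinatorial data defining $\lambda_\tau$ in Proposition \ref{prop:cuprod}---good triples with $\rho$ a maximal cone of some $D_{\{(i,j),(k,l)\}}$ and $\sigma$ a common face---is exactly the data counting the displacement-rule contributions to $c \cap [V(\sigma)]$. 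Thus I would show term-by-term that the coefficient of $[V(\tau)]$ agrees: the sum $\sum [N : N_\rho + N_\tau]$ appearing in $\lambda_\tau$ is precisely the lattice index emerging from the displacement rule applied to $c$ and $[V(\sigma)]$.

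The main bookkeeping step is reconciling the lattice indices. In Proposition \ref{prop:cuprod} the computation lives upstairs on the fan $X_\mathsf{uni}^\mathfrak{t}$ with its lattice $N_\mathcal{X} = N \times_{N^\dagger} N$, and the factor $n_{\rho_\mathsf{eq}}$ from the Chern class of $\mathcal{J}_Z$ appears before being absorbed via the identity $[N_\mathcal{X} : N_\tau]/[N : N_\sigma] = n_\rho$. Downstairs, the weight $c$ representing $\pi_{\mathbb{P}^m}^\star([H])$ carries value $1$ rather than $n_\rho$, precisely because pulling back $[H]$ does not see the lattice length $n_\rho$ that the relative Hilbert-scheme geometry contributes through $c_1(\mathcal{J}_Z)$. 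I expect the crux to be checking that these two bookkeeping conventions produce identical final coefficients: the factor $n_\rho$ that is visible on $X_\mathsf{uni}^\mathfrak{t}$ and the index identity $[N_\mathcal{X} : N_{\rho_\mathsf{eq}} + N_\kappa] = [N : N_\rho + N_\tau]$ must together collapse to the plain displacement-rule count of $c \cap [V(\sigma)]$ on $\VM$. Once this index matching is verified, the identification of good triples on both sides yields the desired equality directly.

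One simplifying remark is that, as noted in Lemma \ref{lem:cup}, all choices of generic displacement vector $v$ give the same weight, so I may fix a single convenient $v$ when comparing the two computations and need not worry about independence of $v$ separately. The verification is therefore a finite, if intricate, matching of indexing sets and lattice indices, with no further geometric input required beyond Proposition \ref{prop:cuprod} and the displacement rule.
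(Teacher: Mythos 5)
Your proposal is correct and follows essentially the same route as the paper: apply the fan displacement rule to compute $c \cap [V(\sigma)]$ on $\VM$, invoke Proposition \ref{prop:cuprod} for the left-hand side, and match coefficients by identifying the good triples (and lattice indices, with the factor $n_\rho$ already absorbed upstairs) on the two fans. The only detail the paper makes explicit that you leave implicit is that the matching of good triples requires choosing the two displacement vectors compatibly --- $v = (\varphi,\phi)$ on $X_{\mathsf{uni}}^{\mathfrak{t}}$ and $u = \varphi$ on ${PT}_0(X,\beta)^{\mathfrak{t}}$ --- which is what makes the bijection $(\rho_{eq},\kappa,v) \mapsto (\rho,\tau,u)$ well defined.
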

	\noindent This is equivalent to Theorem \ref{thm:identifyminkwt} since the rational Chow group of $\VM$ is naturally identified with the rational Chow group of $\M$.
	\begin{proof}
		Applying the fan displacement rule with some choice of vector $u$ to perform the right hand intersection we have
		$$c \cap V(\sigma) = \sum_{\sigma\textrm{ facet of }\tau} \alpha_\tau V(\tau) .$$
		It suffices to show that for some permissable choices of $u$ and $v$ we obtain $\alpha_\tau = \lambda_\tau$. Recall both $u$ and $v$ may be chosen freely in the compliment of finitely many linear spaces. Thus we may fix $v = (\varphi,\phi)$ and $u=\varphi$. 
		
		Let $S_\tau$ be the set of good triples $(\rho_{eq}, \kappa,v)$ with respect to the fan $X_\mathrm{uni}^\mathfrak{t}$ satisfying the hypotheses of Proposition \ref{prop:cuprod}. Let $R_\tau$ be the set of good triples $(\rho, \tau, u)$ with respect to the fan $|\Delta|$. These sets biject naturally $$S_\tau \rightarrow R_\tau$$ $$(\rho_{eq}, \kappa,v) \mapsto  (\rho, \tau, u).$$ The inverse is specified by taking $\kappa$ as the unique cone in $\hat{\tau}$ intersecting $\rho_{eq}-(\varphi,\phi)$. 
	\end{proof}

	\begin{proposition}\label{prop:paths}
		Let $\sigma$ be a cone of codimension $k$, let $B$ in $A^\star(\M)$ and suppose $$F_{\M\star}(B) = [V(\sigma)].$$  
		Letting $p$ be a reduced point in $\M$,
		$$F_{\M\star}(\tau_0([\mathrm{pt}])^kB) = \begin{cases}
		[\mathrm{p}]\quad \quad  &\sigma \in S_\delta ,|\delta|=k\\
		0 \quad \quad \quad \quad \quad &\mathrm{else}.
		\end{cases}
		$$
	\end{proposition}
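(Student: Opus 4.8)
The plan is to trivialise the iterated insertion using Proposition~\ref{thm:identifyminkwt} and then evaluate the resulting $0$-cycle by pushing forward to $\mathbb{P}^m$. First I would invoke Proposition~\ref{thm:identifyminkwt}, which identifies $\tau_0([\mathsf{pt}])$ with $\pi_{\mathbb{P}^m}^\star([H])$ in $A^1(\M)$, hence as an operator on the Chow groups of $\M$. Since cup product commutes with pullback this gives $\tau_0([\mathsf{pt}])^k B = \pi_{\mathbb{P}^m}^\star([H]^k)\cap B$. Pushing forward along the coarse moduli map $F_{\M}$ and using that $\pi_{\mathbb{P}^m}\colon\M\to\mathbb{P}^m$ factors through the coarse space as $\pi_{\mathbb{P}^m}\circ F_{\M}$, the projection formula yields
$$F_{\M\star}\big(\tau_0([\mathsf{pt}])^k B\big) = \pi_{\mathbb{P}^m}^\star([H]^k)\cap [V(\sigma)] \in A_0(\VM),$$
where now $\pi_{\mathbb{P}^m}\colon \VM\to\mathbb{P}^m$ denotes the toric morphism of coarse spaces. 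As $\VM$ is complete, $A_0(\VM)_{\Q}\cong\Q\,[\mathsf{p}]$, so the whole problem reduces to computing the degree of this $0$-cycle.

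Next I would apply the projection formula a second time, now through $\pi_{\mathbb{P}^m}\colon\VM\to\mathbb{P}^m$, to obtain
$$\deg\big(\pi_{\mathbb{P}^m}^\star([H]^k)\cap[V(\sigma)]\big) = \int_{\mathbb{P}^m}[H]^k\cap (\pi_{\mathbb{P}^m})_\star[V(\sigma)].$$
Everything is therefore governed by the pushforward cycle $(\pi_{\mathbb{P}^m})_\star[V(\sigma)]$, a $k$-dimensional cycle on $\mathbb{P}^m$. Because $\pi_{\mathbb{P}^m}$ is the toric morphism coming from the subdivision $\VTM\to\TPN$, it carries $V(\sigma)$ onto $V(\delta_S)$, where $\delta_S$ is the unique cone of $\TPN$ whose relative interior meets that of $\sigma$; equivalently $S\subset\Delta$ is the block with $\sigma\in D_S$.

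The toric analysis of this pushforward is the core of the argument, and splits into two cases. If $\dim\delta_S>\dim\sigma$, so that $\pi_{\mathbb{P}^m}$ strictly drops the dimension of $V(\sigma)$ — equivalently $\sigma$ fails to dominate a codimension-$k$ cone of $\TPN$ — then $(\pi_{\mathbb{P}^m})_\star[V(\sigma)]=0$ for dimension reasons and the integral vanishes. If instead $\dim\delta_S=\dim\sigma$, so that $\delta_S$ is itself of codimension $k$ in $\TPN$ (the condition recorded by $\sigma\in S_\delta$ with $|\delta|=k$), then $\sigma$ and $\delta_S$ share a linear span and hence cut out the same saturated sublattice of $N$. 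Consequently $V(\sigma)\to V(\delta_S)$ is the toric morphism of the refinement $\mathsf{star}(\sigma)\to\mathsf{star}(\delta_S)$ over their common quotient lattice, which is proper and birational, so $(\pi_{\mathbb{P}^m})_\star[V(\sigma)]=[V(\delta_S)]$. Every torus-invariant subvariety of $\mathbb{P}^m$ is a coordinate linear subspace, so $V(\delta_S)\cong\mathbb{P}^k$ has degree one, giving $\int_{\mathbb{P}^m}[H]^k\cap[V(\delta_S)]=1$ and hence $F_{\M\star}(\tau_0([\mathsf{pt}])^kB)=[\mathsf{p}]$.

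I expect the delicate point to be the second case: confirming that $V(\sigma)\to V(\delta_S)$ has degree \emph{exactly} one rather than merely being generically finite, so that no lattice-index multiplicity survives, together with the (easier) fact that its image is a degree-one linear subspace of $\mathbb{P}^m$. Both reduce to the observation that $\VTM$ refines $\TPN$ over a single lattice $N$, so that the comparison of star fans takes place over a common quotient lattice; pinning this down, and matching the geometric condition ``$\sigma$ dominates a codimension-$k$ cone of $\TPN$'' with the bookkeeping ``$\sigma\in S_\delta,\ |\delta|=k$'', is the only real work. The vanishing case is immediate once the contraction is observed.
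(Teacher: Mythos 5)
Your proposal is correct, and it reaches the same first reduction as the paper but finishes the computation by a different (dual) route. The paper's proof is three lines: by Theorem \ref{thm:auxtominkwt} it reduces to computing $c^k\cap [V(\sigma)]$ on the coarse space, notes $c^k$ is the Minkowski weight $\pi_{\mathbb{P}^m}^\star([H]^k)$, and then simply cites the pullback formula for Minkowski weights along the surjective subdivision map \cite[Proposition 2.7]{IntToricVar} -- the case distinction in the statement is exactly the statement that the pulled-back weight takes value $1$ on codimension-$k$ cones of $\VTM$ whose minimal containing cone $\delta_S$ in $\TPN$ also has codimension $k$, and $0$ otherwise, combined with the evaluation pairing $\deg(c^k\cap[V(\sigma)])=c^k(\sigma)$. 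You instead move the computation downstairs: projection formula along $\pi_{\mathbb{P}^m}\colon\VM\to\mathbb{P}^m$, then an explicit analysis of $(\pi_{\mathbb{P}^m})_\star[V(\sigma)]$, with the vanishing case from dimension drop and the nontrivial case from the observation that when $\dim\delta_S=\dim\sigma$ the two cones span the same saturated sublattice, so $V(\sigma)\to V(\delta_S)$ is identity on dense tori (both are $T_{N/N_\sigma}$), hence proper birational of degree one onto a coordinate $\mathbb{P}^k$. Since Fulton--Sturmfels define the pullback of Minkowski weights precisely so that the projection formula holds, your computation and the paper's citation are two faces of the same identity; what your version buys is self-containedness and an explicit geometric justification of the multiplicity-one claim that the paper leaves buried in the reference, while the paper's version is shorter and avoids re-verifying that the star-fan comparison is a map of complete fans over a common quotient lattice (the point you correctly flag as the only real work, and which your argument does settle: any cone of $\VTM$ containing $\sigma$ lies in a cone of $\TPN$ meeting the relative interior of $\delta_S$, hence containing $\delta_S$ as a face). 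One cosmetic note: your reading of the garbled condition ``$\sigma\in S_\delta,\ |\delta|=k$'' as ``the minimal cone of $\TPN$ containing $\sigma$ has codimension $k$'' is the right one, consistent with how $D_S$ is used in Lemma \ref{lem:cupcupcup}.
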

	\begin{proof}
		In light of Theorem \ref{thm:auxtominkwt} we must compute $c^k\cap V(\sigma)$. Note $c$ corresponds to the pull back of $[H]$ under $\pi_{\mathbb{P}^m}$. Thus $c^k$ is the pullback of $[H]^k$. Pullback of Minkowski weights in this situation is described in \cite[Proposition 3.7]{IntToricVar}. 
	\end{proof}
	
	\subsection{Logarithmic insertions}
	To impose logarithmic insertions we pull back classes in the toric variety $\mathrm{Hilb}^{\mathrm{log}}_\beta(\partial X)$ along the toric morphism $\mathrm{ev}$. Pulling Minkowski weights back along morphisms which are either surjective or flat is straightforward, however $\mathrm{ev}$ is neither. Set $\pi_{i}$ the projection from $\mathrm{Hilb}^{\mathrm{log}}_\beta(\partial X)$ to the logarithmic Hilbert scheme of points on the $i^{th}$ component of the boundary $\VB_{k_i}$.
	
	\begin{lemma}
		The composition $\pi_i \circ \mathrm{ev}$ is surjective.
	\end{lemma}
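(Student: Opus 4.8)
The plan is to recognise the composition $\pi_i \circ \mathsf{ev}$ as a single evaluation morphism and then to check surjectivity at the level of cocharacter lattices, where it reduces to the elementary observation that a function on one edge of $\Delta$ always extends to all of $\Delta$. Since $\mathsf{ev}$ is by construction the product $\prod_{j=1}^p \mathsf{ev}_j$ and $\pi_i$ is projection onto the $i$th factor, we have $\pi_i \circ \mathsf{ev} = \mathsf{ev}_i$, the toric morphism induced by the fan map $\VTM \to \TB_{k_i}$ that restricts a function $\varphi \in N$ to the lattice points of $\Delta$ lying on face $i$. As surjectivity of a morphism of toric Deligne--Mumford stacks may be tested on coarse moduli spaces, it suffices to prove that the toric morphism $\VM \to \VB_{k_i}$ underlying $\mathsf{ev}_i$ is surjective.

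First I would isolate the underlying map of cocharacter lattices. Write $\hat N = \mathbb{Z}^{\mathsf{card}(\Delta)}$ for the integer-valued functions on $\Delta$ and $\hat N_i = \mathbb{Z}^{k_i+1}$ for the integer-valued functions on the $k_i+1$ lattice points of face $i$, so that $N = \hat N / \mathbb{Z}\mathbf{1}$ and the cocharacter lattice $N_i$ of $\VB_{k_i}$ equals $\hat N_i / \mathbb{Z}\mathbf{1}$, the quotients by constant functions. The restriction $r_i : \hat N \to \hat N_i$ is a coordinate projection, hence surjective: any prescribed function on face $i$ extends to $\Delta$ by assigning arbitrary values off the face. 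Because $r_i$ sends the constant function $\mathbf{1}$ to the constant function $\mathbf{1}$ on face $i$, it descends to $\bar r_i : N \to N_i$, and the surjectivity of $r_i$ together with $r_i(\mathbb{Z}\mathbf{1}) = \mathbb{Z}\mathbf{1}$ forces $\bar r_i$ to be surjective. This $\bar r_i$ is exactly the map of cocharacter lattices underlying $\mathsf{ev}_i$.

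Next I would deduce surjectivity of the morphism from surjectivity of the lattice map. Tensoring the surjection $\bar r_i : N \to N_i$ with $\mathbb{C}^\times$ yields a surjection of dense tori $T_N \to T_{N_i}$, so the image of $\mathsf{ev}_i$ contains the dense torus orbit of $\VB_{k_i}$. The fan $\VTM$ is a subdivision of the complete fan $\TPN$ and therefore has support all of $N_\mathbb{R}$, so $\VM$ is a complete toric variety; consequently $\mathsf{ev}_i$ is proper and its image is closed. A closed subset of the irreducible variety $\VB_{k_i}$ that contains the dense torus must be everything, whence $\mathsf{ev}_i$ is surjective and the lemma follows.

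The argument is genuinely short, and I expect no serious obstacle; the only point requiring care is the bookkeeping through the quotients by constant functions. It is worth emphasising where the single-factor hypothesis is essential: extending a function off one edge is completely unconstrained, whereas the full product map $\mathsf{ev}$ fails to be surjective precisely because the values prescribed on the different edges of $\Delta$ cannot be realised independently.
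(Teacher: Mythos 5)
Your proof is correct and follows essentially the same route as the paper: the paper's proof also observes that both source and target are complete, so surjectivity need only be checked onto a dense subset, and then notes that the restricted map of dense tori is surjective. Your write-up simply makes explicit the lattice-level bookkeeping (restriction $N \to N_i$ is a coordinate projection modulo constants) that the paper dismisses with ``the result is clear.''
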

	\begin{proof}
		This map is between two complete varieties so we need only check surjectivity to a dense set. Restricting to the map of tori the result is clear.
	\end{proof}
	
	\FloatBarrier
	\begin{remark}
		
		The morphism $\pi_i \circ \mathrm{ev}$ is not always flat. Indeed consider $\Delta = \mathrm{conv}\{(0,0),(1,1),(1,-1)\}$. Figure \ref{fig:counterex} depicts a combinatorial type of pre--expansion tropical curve corresponding to a cone of $|\Delta|$. 
		
		\begin{figure}
			\includegraphics[width=100mm]{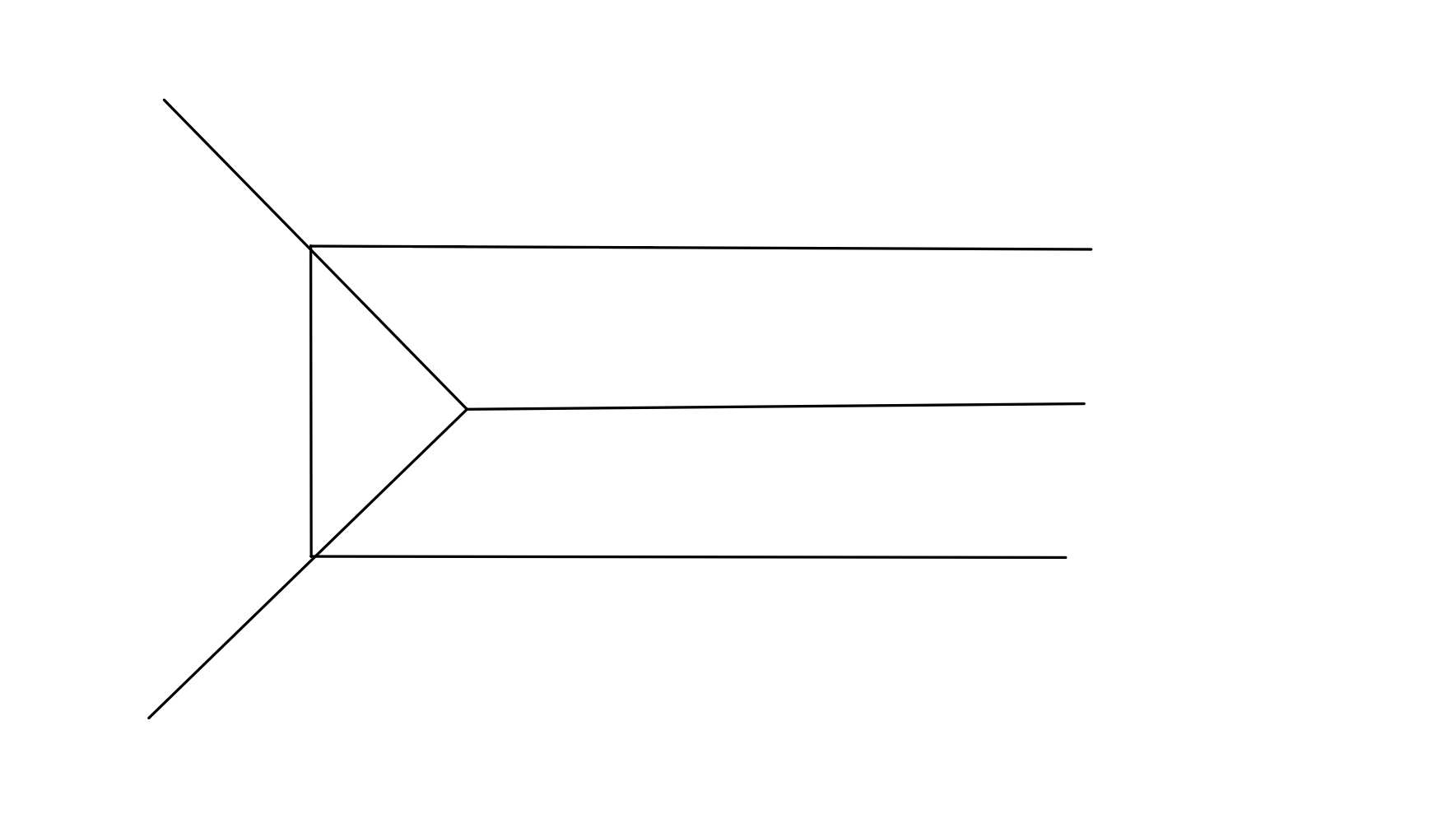}
			\caption{An example of a cone whose image under $\pi_i^{\mathfrak{t}} \circ \mathrm{ev}^{\mathfrak{t}}$ is not a whole cone. Via the toric dictionary we deduce $\pi_i \circ \mathrm{ev}$ is not flat.}
			\label{fig:counterex}
		\end{figure}
	\end{remark}
	
	One can easily pull back Minkowski weights along surjective morphisms \cite[Proposition~3.7]{IntToricVar}. In general to pull back any element of $A^\star(\mathrm{Hilb}^{\mathrm{log}}_\beta(\partial X))$ use K\"{u}nneth to write it as a linear combination of classes $x=\bigcup_i \pi_i^\star(\delta_i)$ for $\delta_i$ in $A^k(P_{k_i})$. Then $\mathrm{ev}^\star(x) = \bigcup_i \mathrm{ev}^\star\pi_i^\star(\delta_i)$.
	
	\begin{example}\label{ex:singlept}
		Let $B$ be the substack of $\mathcal{P}_d$ parametrising subschemes of length $d$ supported on a single point. Let $a$ be the class in the Chow ring $A^\star(\mathcal{P}_d)$ defined by intersection with $B$. We understand the Minkowski weight on $\VTB_{d}$ whose corresponding operational Chow class on $\VB_{d}$ pulls back under $F_{\B_d}$ to $a$. 
		
		The cycle $F_{\B_d\star}(B)$ is the closure of the collection of ideal sheaves supported in a single point in the dense torus of $\mathbb{P}^1$. Subschemes in this dense torus are specified by polynomials $\sum_{i=0}^na_iX^i$. Such a polynomial has a single root if and only if the following relations are satisfied for $k$ between $1$ and $d-1$ $$f_k=\binom{d}{k}\left(\frac{a_{d-1}}{d}\right)^k - a_{d-k}.$$ The tropicalisation (in the sense of \cite{MaclaganSturmfels}) of $V(f_1,...,f_d)$ is the intersection of the tropicalisations of $V(f_i)$. The tropicalisation of $f_i$ is the collection of points $$[X_0:...:X_d]\in \mathbb{R}^{k+1}/\mathbb{R}$$ where $iX_i = X_0$. Imposing a polyhedral structure, each maximal cone has weight $i$. The intersection of these tropicalisations is a line. The length of the intersection of the closure of $V(f_2,...,f_d)$ with the torus orbits corresponding to the two rays contained within this line is one. Indeed $f_k=0$ expresses $a_{d-k}$ in terms of $a_{d-1}$ unambiguously. This line is the union of two rays in the fan $\VTB_d$ so by \cite[Proposition 6.3.4]{MaclaganSturmfels} the associated Minkowski weight adopts the value one on these rays and zero elsewhere by the above analysis.
	\end{example}
	\subsection{A first computation with the logarithmic linear system}\label{sec:firstcomp}
	
	Computation \ref{comp:curvesinP1} is performed. A back of the envelope calculation suggests $(d!)^3$ is correct, and not $1$. The insertion of $(d-4)(d-2)/2$ point constraints means our task is to compute the degree of the codimension ${(d-4)(d-2)}/{2}$ subvariety $B$ studied in Example \ref{ex:singlept}. We are thus computing the degree of the intersection of $V(\{f| f \in I\})$ for $I$ a set of $3d$ polynomials, three of degree $i$ for $i=1,...,d$. By intersection theory on projective space the intersection of cycles cut out by such polynomials is degree $(d!)^3$. The particular form of the constraint polynomials, together with the geometry of the logarithmic linear system will force the answer one instead. The answer $1$ we obtain should be expected in light of Example \ref{ex:singlept}.
	
	For the remainder of this subsection fix the following polytope $$\Delta = \mathrm{conv}\{(0,0), (d,0), (0,d)\}.$$ This polytope has three faces of dimension one: Face 1 is the line segment $[(0,0), (0,d)]$ Face 2 is $[(0,0), (d,0)]$ and Face 3 is $[(0,d),(d,0)]$. Let $A_i$ be the collection of cones in $|\Delta|$ consisting of functions $\varphi$ which are linear when restricted to Face $i$.
	
	Let $a$ be the class in the Chow group $A_\star(\B_d)$ consisting of length $d$ ideal sheaves on $\mathbb{P}^1$ supported in a single point. The Minkowski weight of this subscheme is described in Example \ref{ex:singlept}. Set $a_i = \pi_i^\star(a)$.
	
	\begin{lemma}\label{lem:cupcupcup}
		There is an equality of Chow classes $$F_{\mathcal{PT}_0(X,\beta)\star}(\mathrm{ev}^\star(a_1\cup a_2 \cup a_3)\cap [\M]) = V(\tau)$$ where $\tau$ is the unique maximal cone of $D_S$ where $S$ is defined by $$S=\{(i,j) \in \Delta| i,j\ne0, i+j \ne d\}.$$
	\end{lemma}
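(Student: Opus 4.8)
The plan is to carry out the entire computation with Minkowski weights on the coarse space $\VM$, transporting the answer back to $\M$ via the identification of rational Chow groups used just after Theorem \ref{thm:auxtominkwt}. The first step is to represent each pulled back class $\mathsf{ev}^\star(a_i)$ as a weight on $\VTM$. Since $a_i=\pi_i^\star(a)$ we have $\mathsf{ev}^\star(a_i)=(\pi_i\circ\mathsf{ev})^\star(a)$, and the composite $\pi_i\circ\mathsf{ev}$ is surjective, so pullback on Minkowski weights is governed by \cite[Proposition 2.7]{IntToricVar}. By Example \ref{ex:singlept} the weight of $a$ is $d!$ on the two rays of $\VTB_{d}$ cut out by the single--point locus and $0$ elsewhere; pulling back, $\mathsf{ev}^\star(a_i)$ is supported on those cones of $\VTM$ whose image under $\mathsf{ev}_i^\mathfrak{t}$ (restriction of $\varphi$ to the lattice points of face $i$) lands on such a ray, with weight $d!$ times a lattice index. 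Geometrically this is the locus where the curve meets the $i$-th boundary line in a single fat point of multiplicity $d$, equivalently where $\hat\varphi$ is affine along face $i$ so that edge $i$ of $\Delta$ is left unsubdivided.

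Next I would compute the triple cup product $\mathsf{ev}^\star(a_1)\cup\mathsf{ev}^\star(a_2)\cup\mathsf{ev}^\star(a_3)$ and its cap with $[\VM]$ by the fan displacement rule \cite[Theorem 3.2]{IntToricVar} (equivalently iterating \cite[Proposition 3.1]{IntToricVar}), choosing a generic displacement vector exactly as in the proof of Theorem \ref{thm:auxtominkwt}. The combinatorial core of the argument is to show that, after displacement, the three single--point conditions are realisable simultaneously on exactly one cone. Using the dictionary between cones of $\VTM$ and combinatorial types of tropical curves (Proposition \ref{prop:tropmod}), imposing that edge $i$ be unsubdivided for all three $i$ forces $\hat\varphi$ to concentrate its minimum on the interior lattice points of $\Delta$, that is on $S=\{(i,j)\in\Delta\mid i,j\ne0,\ i+j\ne d\}$. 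This isolates the stratum $D_S$, and genericity of the displacement vector selects its unique maximal cone $\tau$; every other cone is killed because its combinatorial type subdivides at least one edge and therefore fails the corresponding single--point condition.

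Finally the coefficient: each factor contributes the weight $d!$ from Example \ref{ex:singlept}, and the displacement rule multiplies the three weights together with lattice indices of the shape $[N:\sum_i N_{\rho_i}]$. I would choose coordinates adapted to $\tau$, in which the three restriction maps $\mathsf{ev}_i^\mathfrak{t}$ are mutually transverse and saturated onto their images, so that each index equals $1$ and the total weight is exactly $(d!)^3$; this also reproduces the B\'ezout heuristic recorded at the start of the subsection, where three families of relations of degrees $1,\dots,d$ cut out a cycle of degree $(d!)^3$. The main obstacle will be the middle step: proving that the combined single--point conditions localise onto a single cone $\tau$ rather than a higher--dimensional union, and matching that cone with the maximal cone of $D_S$ through Proposition \ref{prop:tropmod}. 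The second delicate point is verifying that the lattice indices produced by the displacement rule are genuinely $1$, so that no spurious multiplicity accompanies $(d!)^3$; this is where the explicit form of the maps $\mathsf{ev}_i^\mathfrak{t}$ and the freedom in the choice of displacement vector must be used carefully.
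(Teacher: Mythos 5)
Your first step matches the paper exactly: via Example \ref{ex:singlept}, surjectivity of $\pi_i\circ\mathsf{ev}$, and \cite[Proposition 2.7]{IntToricVar}, the class $\mathsf{ev}^\star(a_i)$ is represented by the Minkowski weight taking the value $d!$ on the cones of $A_i$, the cones of functions linear on Face $i$ --- your ``edge $i$ unsubdivided'' locus is the same set. The genuine gap is in your middle step. It is simply not true that linearity of $\varphi$ on all three boundary faces forces the minimum of $\varphi$ to sit on the interior lattice points: the zero function is linear on all three faces with minimum attained everywhere, and more damagingly a function that is linear on all three faces, attains its minimum at a single interior lattice point, and is otherwise generic lies in a cone of $A_1\cap A_2\cap A_3$ of the correct codimension that is \emph{not} contained in $\delta_S$. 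So the support of the triple cup product does not localise onto $D_S$; the paper's own proof records the opposite, namely that the product weight is supported on \emph{all} cones of the appropriate codimension in $A_1\cap A_2\cap A_3$ and takes a constant value there (computable with any generic displacement vector). Your convexity argument for concentrating the minimum on $S$ therefore fails, and you in fact flagged this yourself as ``the main obstacle'' without supplying a mechanism to overcome it.

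The mechanism in the paper is that the localisation onto the single cone $\tau$ happens not at the cup-product stage but at the cap with the fundamental class: one computes $\mathsf{ev}^\star(a_1\cup a_2\cup a_3)\cap V(0)$ by \cite[Proposition 3.1]{IntToricVar}, and the paper chooses the displacement vector $v$ so that the induced subdivision $\Delta_v$ is the \emph{trivial} subdivision of $|\Delta|$. For this particular $v$ the good-triple condition $\sigma\cap(0+v)\neq\emptyset$ eliminates the contribution of every cone in the support except the one producing $(d!)^3\,V(\tau)$, with $\tau$ the maximal cone of $D_S$. A generic displacement of the cup product alone, as you propose, does not shrink its support --- genericity guarantees well-definedness of the weight, not concentration on one cone --- so the selection of $\tau$ must come from this specific choice of $v$ in the cap. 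Your final remarks on lattice indices (that adapted coordinates make each index $1$, yielding exactly $(d!)^3$) are in the same spirit as the paper's ``straightforward computation,'' but they rest on the unestablished localisation, so as written the proposal does not prove the lemma.
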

	
	\begin{proof}
		The class $\mathrm{ev}^\star(a_i)$ is pulled back from the Minkowski weight which adopts the value one on the codimension $d-1$ cones in $A_i$. The Minkowski weight whose class pulls back to $\mathrm{ev}^\star(a_1)\cup \mathrm{ev}^\star(a_2) \cup \mathrm{ev}^\star(a_3)$ thus has support in $A_1\cap A_2\cap A_3$. A straightforward computation (with any generic choice of displacement vector) shows this Minkowski weight adopts the value one on all such cones.
		
		We compute $\mathrm{ev}^\star(a_1\cup a_2 \cup a_3)\cap V(0)$ by applying \cite[Proposition 4.1]{IntToricVar} using displacement vector $v$ such that $\Delta_v$ is the trivial subdivision. 
	\end{proof}

	The result of Computation \ref{comp:curvesinP1} follows applying Lemma \ref{lem:cupcupcup}, Proposition \ref{prop:paths} and noting $\int$ means push--forward to a point. This pushforward factors through $F_{\M}$. 
	
	\section{Stable pairs spaces}\label{sec:HighereulerChar}
	The moduli space of stable pairs on the toric surface $X$ with discrete data $\beta$ and $n$ is denoted ${\mathrm{PT}}^{\mathrm{abs}}_n(X,\beta)$. We refer to this as the \textit{absolute} stable pairs space to emphasise the distinction with the logarithmic stable pairs space. This moduli space parameterises elements in the derived category of sheaves on $X$ of the form $[s: \mathcal{O}_X\rightarrow F]$. Here $F$ is a pure sheaf and $\mathrm{coker}(s)$ has dimension zero and length $n$. The kernel of $s$ is the ideal sheaf of a curve of class $\beta$. Observe $\mathrm{PT}^{\mathrm{abs}}_0(X,\beta)$ coincides with the linear system $\mathbb{P}^m$.
	
	Consider the universal family in the absolute $n=0$ case \begin{equation}\label{eq:ktcons}
	\begin{tikzcd}
	\textbf{C} \arrow[r] \arrow[rd] & X \times \mathbb{P}^m \arrow[d, "\pi"] \\
	& \mathbb{P}^m                                       
	\end{tikzcd}.\end{equation} Pandharipande and Thomas identified an isomorphism \cite[Proposition B.8]{PT2} $$\mathrm{PT}^{\mathrm{abs}}_n(X,\beta)\cong \mathrm{Hilb}^{[n]}(\textbf{C}\rightarrow \mathbb{P}^m).$$ Kool and Thomas expressed this relative Hilbert scheme of points as the zero set of a section of a vector bundle inside $\mathrm{Hilb}^{[n]}(X\times \mathbb{P}^m\rightarrow \mathbb{P}^m)$ \cite[Appendix A]{Kool_2014}.
	
	In this section we apply logarithmic analogues of these techniques to construct a space $\PTnXbeta$. Points of $\PTnXbeta$ specify stable pairs $[s: \mathcal{O}_{X_\Gamma}\rightarrow F]$ on \textit{n-weighted expansions} $X_{\hat{\Gamma}} \rightarrow X$.
	
	\subsection{The logarithmic version of Diagram \ref{eq:ktcons}}
	
	We first obtain the logarithmic analogue $\pi_n$ of the morphism $\pi$ in Diagram \ref{eq:ktcons}. Since the permitted class of expansions depends on $n$, the diagram will also depend on $n$.
	
	\begin{definition}
		Let $\sigma$ be a cone in the fan of $\mathcal{X}^\mathfrak{t}$. Define $k$ satisfying the following expression $$\mathrm{dim}(\varpi^\mathfrak{t}(\sigma)) +k = \mathrm{dim}(\sigma).$$  
		If $k=0$ we say $\sigma$ is a \textit{component cone}; if $k=1$ then $\sigma$ is a \textit{curve cone} and if $k=2$ then $\sigma$ is a \textit{point cone}.
	\end{definition}
	
	The logarithmic version of the morphism $\pi$ is not the relative Hilbert scheme of points of the morphism $$\mathcal{Z} \rightarrow \M$$ because in this relative Hilbert scheme points are able to fall into torus orbits corresponding to curve cones. This violates the strong transversality condition of \cite{MR20}. Instead we construct a morphism of algebraic stacks $${\pi}_{n}:{\mathpzc{X}}_n^o \rightarrow {\mathpzc{P}}_{n}.$$ Locally ${\mathpzc{P}}_{n}$ contains an open subset of $\M$ as a dense open. The extra points are stacky and their fibres in ${\mathpzc{X}}_{n}^o$ look somewhat like an expansion, except they have extra components. These extra components allow properness without violating strong transversality.
	
	\subsubsection{Outline of Construction} To construct the morphism $\pi_n$ we turn first to tropical geometry and study something like a tropical version of a relative Hilbert scheme of points. We break our construction into a series of steps. Before proceeding with the construction we explain the goal of each step.
	
	\noindent \textbf{Step 1.} Define a fan $(\mathpzc{P}_n^\mathfrak{t})''$ whose points specify tuples $(\Gamma, p_1,...,p_n)$ where $\Gamma$ is a pre--expansion tropical curve and $p_i$ are (labelled) points on $N_X \otimes \mathbb{R}$. Both the combinatorial type of $\Gamma$ and the stratum of $\mathfrak{P}_\Gamma$ in which each point $p_i$ lies are constant on cones. 
	
	\noindent \textbf{Step 2.} Obtain $(\mathpzc{P}_n^\mathfrak{t})'$ by removing the locus in  $(\mathpzc{P}_n^\mathfrak{t})''$ corresponding to tuples $(\Gamma,p_1,...,p_n)$ where not all $p_i$ lie within the one skeleton of $\mathfrak{P}_\Gamma$. This is essential if we hope to impose a canonical polyhedral structure on our final space: the moduli of points in a two dimensional polyhedron is not naturally a cone complex, and one would be forced to make a choice of cone complex structure. 
	
	\noindent \textbf{Step 3.} Subdivide $(\mathpzc{P}_n^\mathfrak{t})'$ to ensure we can construct a combinatorially flat universal family. Because we threw away problematic cones in Step (2), this subdivision is canonical. We are subdividing along the locus where the $p_i$ coincide.
	
	\noindent This concludes the tropical part of our construction. We now explain the algebraic steps.
	
	\noindent \textbf{Step 4.} The toric dictionary applied to the output of Step 3 yields a toric variety $\vardbtilde{\mathpzc{P}}_{n}$. The dimension of this toric variety is $2n$ greater than the dimension of $\M$ so cannot locally contain $\M$ as an open subscheme. The problem is that $\vardbtilde{\mathpzc{P}}_{n}$ tracks the location of $n$ points on the universal expansion. This data should not be recorded by $\mathpzc{P}_n$. There is an action of $(\mathbb{C}^\star)^{2n}$ on $\vardbtilde{\mathpzc{P}}_{n}$. Taking the quotient of $\vardbtilde{\mathpzc{P}}_{n}$ by this torus action forgets the location of the $n$ points but still remembers the expansion. Denote the quotient stack $\tilde{\mathpzc{P}}_{n}$.
	
	\noindent \textbf{Step 5.} The stack $\tilde{\mathpzc{P}}_{n}$ tracks the labels of each $p_i$. To forget this data we quotient by the action of the symmetric group on $n$ letters yielding a stack $\mathpzc{P}_n$.
	
	\noindent \textbf{Step 6.} There is a universal expansion $$\mathpzc{X}_n \rightarrow \mathpzc{P}_n.$$ We remove toric strata from $\mathpzc{X}_n$ so that when taking a relative Hilbert scheme of points we do not encounter points falling into the logarithmic boundary.
	
	\subsubsection{Detailed Construction} We now detail how to perform each step in our construction. 
	\begin{construction}\label{cons:N}
		(Constructing the logarithmic version of morphism $\pi_2$.) 
		\begin{enumerate}[(1)]
			\item Consider the following fibre product in the category of fans. The fibre product is taken over $|\Delta|$
			$$ (\mathpzc{P}_n^\mathfrak{t})'' = 
			{\prod_{i=1}^n}_{\VTM} X_{\mathrm{uni}}^\mathfrak{t}.
			$$
			and denote the projection map
			$$(\varpi_{n}'')^\mathfrak{t}: (\mathpzc{X}_n^\mathfrak{t})''=X_{\mathrm{uni}}^\mathfrak{t} \times_{\VTM} (\mathpzc{P}_n^\mathfrak{t})'' \rightarrow (\mathpzc{P}_n^\mathfrak{t})''.$$
			\item Note a point of the ambient lattice in which $(\mathpzc{P}_n^\mathfrak{t})''$sits is specified by $n$ ordered pairs of functions in $N_\mathbb{R}$ $$((f_1,g),...,(f_n,g)).$$ Remove cones $\sigma$ of $(\mathpzc{P}_n^\mathfrak{t})''$ containing a point $((f_1,g),...,(f_n,g))$ such that $f_i$ is in a point cone for some $i$.
			Remove the preimages of these cones in $(\mathpzc{X}_n^\mathfrak{t})''$. There is a map between the resulting spaces $$(\pi_{n}^\mathfrak{t})':(\mathpzc{X}_n^\mathfrak{t})' \rightarrow (\mathpzc{P}_n^\mathfrak{t})'.$$
			\item Subdivide those cones $\sigma$ in the fan $(\mathpzc{P}_n^\mathfrak{t})'$ containing points such that $f_i$ and $f_j$ lie in the same curve cone for some $i\ne j$. We subdivide along the locus where $f_i=f_j$ and pull back the subdivision along $(\pi_{n}^\mathfrak{t})'$ to obtain the following diagram 
			$$\pi_{n}^\mathfrak{t}:\mathpzc{X}_n^\mathfrak{t} \rightarrow \mathpzc{P}_n^\mathfrak{t}.$$
			\item Applying weak semistable reduction \cite{molcho2019universal} to the associated morphism of toric varieties, we obtain the following morphism of toric stacks
			$$\vardbtilde{\pi}_{n}:\vardbtilde{\mathpzc{X}}_n \rightarrow \vardbtilde{\mathpzc{P}}_{n}.$$
			There is a surjective toric morphism $$\vardbtilde{\mathpzc{P}}_{n}\rightarrow \M.$$ The kernel of the associated morphism of dense tori is a torus of dimension $2n$ acting on $\vardbtilde{\mathpzc{P}}_{n}$. Choose a lift of this action to $\vardbtilde{\mathpzc{X}}_n$. The morphism $\vardbtilde{\pi}_{n}$ descends to a morphism of quotient stacks of relative dimension two
			$$\tilde{\pi}_{n}:\tilde{\mathpzc{X}}_n \rightarrow \tilde{\mathpzc{P}}_{n}.$$
			\item 
			The symmetric group on n letters acts equivariantly on the domain and codomain of $\tilde{\pi}_{n}$. This action is induced by permuting terms in the product $(\mathpzc{P}_n^\mathfrak{t})''$ and respects the fan structure of $(\mathpzc{P}_n^\mathfrak{t})'$. Pass to the quotient by this group action
			$${\pi}_{n}:{\mathpzc{X}}_n \rightarrow {\mathpzc{P}}_{n}.$$
			\item A cone $\sigma$ of $\mathpzc{X}_n^\mathfrak{t}$ is called \textit{stable }if for every point $((f_1,g),...,(f_n,g))$ in $|\sigma|$ each $f_i$ lies in a component cone of $X_n$. Remove all cones of ${\mathpzc{X}}_n^\mathfrak{t}$ which are not stable. Through the toric dictionary we obtain an algebraic stack denoted $\vardbtilde{\mathpzc{X}}_{n}^o$. Mimicking the constructions in (4) and (5) we obtain the following diagram
			$${\pi}_{n}:{\mathpzc{X}}_{n}^o \rightarrow {\mathpzc{P}}_{n}.$$
		\end{enumerate}
	\end{construction}
	
	\begin{remark}\label{rem:divlogstruc}
		Both $\vardbtilde{\mathpzc{X}}_n$ and $ \vardbtilde{\mathpzc{P}}_{n}$ are toric stacks so carry a divisorial logarithmic structure induced by their toric boundary. This induces a divisorial logarithmic structure on ${\mathpzc{X}}_n$ and ${\mathpzc{P}}_{n}$.
	\end{remark}
	
	\begin{construction}\label{cons:Znprime}
		(Construction of $\mathcal{Z}_n'$ -- the logarithmic analogue of $\textbf{C}$ in Diagram \ref{eq:ktcons}) The projection map  $$p_1^\mathfrak{t}:(\mathpzc{X}_n^\mathfrak{t})''=X_\mathrm{uni}^\mathfrak{t} \times_{\VTM} (\mathpzc{P}_n^\mathfrak{t})'' \rightarrow X_\mathrm{uni}^\mathfrak{t}$$ defines a morphism of toric stacks $$p_1:\mathpzc{X}_n''\rightarrow \mathrm{X}_{\mathrm{uni}}.$$ Take the strict transform of ${Z}$, as defined in Construction \ref{cons:univsubsch}, under $p_1$ to give a subvariety of the toric stack corresponding to $(\mathpzc{X}_n^\mathfrak{t})''$. Taking strict transforms and quotients at every stage of Construction \ref{cons:N} defines $\mathcal{Z}_n'$ fitting into the following diagram. 
		
		\begin{equation*}
		\begin{tikzcd}
		\mathcal{Z}_n' \arrow[r]\arrow[rd] & \mathpzc{X}_n \arrow[d, "\pi_{n}"] \\
		& \mathpzc{P}_n                                     
		\end{tikzcd}.\end{equation*}
		We define $\mathcal{Z}_n'^o=\mathcal{Z}_n' \cap {\mathpzc{X}}_{n}^o$.
	\end{construction}
	\subsection{Construction of $\PTnXbeta$} To complete our construction we need to set up a condition generalising strongly transverse and stable subcurves. We will refer to labelled points in a tropical curve as \textit{marked points}. Marked points are not unbounded ends of a tropical curve, contrasting with the terminology used elsewhere. 
	\begin{definition}An \textit{n--weighted pre--expansion tropical curve }is a triple $(\Gamma,\{p_1,...,p_k\},w)$ where $\Gamma$ is a pre--expansion tropical curve equipped with marked points $\{p_1,...,p_k\}$. To each marked point $p_i$ we assign an integer $w(p_i)$ such that $w(p_i)$ is greater than zero for all $i$ and $\sum_{i=1}^k w(p_i) = n$.
	\end{definition}
	
	In the sequel we shorten the triple $(\Gamma,\{p_1,...,p_k\},w)$ to ${\Gamma_k}$. 
	
	\begin{definition}\label{defn:weighttrop}
		An \textit{n--weighted pre--expansion tropical curve} ${\Gamma_k} = (\Gamma,\{p_1,...,p_k\}, w)$ defines an n--weighted expansion $X_{{\Gamma_k}}$ as follows. For each $p_i$ not on a vertex of $\Gamma$ insert a two valent vertex at $p_i$. After repeating this for each $p_i$ we obtain the tropical curve $\Gamma'$. Through Construction \ref{cons:expansions} the tropical curve $\Gamma'$ gives rise to an expansion $X_{{\Gamma_k}}\rightarrow X$. The bijection from Remark \ref{rem:notation} furnishes a function $$w:\{\textrm{two dimensional components of }X_{{\Gamma_k}}\}\rightarrow \mathbb{N}\cup \{0\}.$$ Here the image of any component not containing a marked point is zero.
	\end{definition}
	
	\noindent Note the divisorial log structure in Remark \ref{rem:divlogstruc} defines a stratification of $\mathpzc{P}_n$. Strata specify n--weighted tropical curves.

	\begin{remark}
		Let $p$ a point of $\mathpzc{P}_n$ and consider $\pi_n^{-1}(p)$. This is isomorphic to $\vardbtilde{\pi}_{n}^{-1}(x)$ for a point $x$ in $\vardbtilde{\mathpzc{P}}_{n}$; suppose $x$ is a point in $O(\sigma)$ for $\sigma$ a cone of $\vardbtilde{\mathpzc{P}}_{n}^\mathfrak{t}$ and note $X_{{\Gamma_k}}\cong \pi_n^{-1}(x)$. The function $w$ from irreducible components of $\pi_n^{-1}(x)$ to $\mathbb{N}$ is specified as follows. There is a bijection between irreducible components of $X_{{\Gamma_k}}$ and vertices of $\Gamma'$. Choose a point $((f_1,g)...,(f_n,g))$ in $\sigma$. Each $f_i$ specifies a point on $\Gamma$ equipping us with the list of points $q_1,...,q_n$. A list of distinct points is $p_1,...,p_k$, these are all vertices of $\Gamma'$ by construction. For $v$ a vertex of $\Gamma'$ define $w(v)$ to be the number of times $v$ appears in the list $q_1,...,q_n$.
	\end{remark}
	
	\begin{definition}
		A stable pair $[s:\mathcal{O}_{X_{{\Gamma_k}}}\rightarrow F]$ is \textit{stable and strongly transverse} if and only if the following conditions are satisfied:
		\begin{enumerate}[(1)]
			\item Let $v$ be a two dimensional component of $X_{{\Gamma_k}}$. Length $w(v)$ of $\mathrm{coker}(s)$ is supported on component $v$. 
			\item The curve $\mathrm{ker}(s)$ is a strongly transverse and stable sub--curve in the sense of Definition~\ref{def:STS}.
		\end{enumerate}
	\end{definition}
	\begin{lemma}\label{lem:stablepairhilb}
		Let $x$ be a closed point of the relative Hilbert scheme $\mathrm{Hilb}^{[n]}(\mathcal{Z}_n^o/\mathpzc{P}_n)$. This gives the data of a zero dimensional scheme embedded in a curve $Z \rightarrow C$. Suppose $Z$ has ideal $\mathcal{J}_Z$. The following morphism associated to $x$ is a stable pair
		$$\mathcal{O}_C \rightarrow \mathcal{J}_Z^\star.$$
		
	\end{lemma}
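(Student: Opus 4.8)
The plan is to recognise $[\mathcal{O}_C\to\mathcal{J}_Z^\star]$ as the output of the classical Pandharipande--Thomas correspondence \cite[Proposition B.3]{PT} applied fibrewise, and to verify the two defining conditions of a stable pair directly: that $F=\mathcal{J}_Z^\star$ is pure of dimension one, and that $\mathsf{coker}(s)$ is zero dimensional of length $n$. The closed point $x$ records a point $p$ of $\mathcal{P}_{\Delta,n}$, the fibre curve $C=\varpi_n^{-1}(p)\cap\mathcal{Z}_n^o$ sitting inside an expansion $X_{\hat{\Gamma}}$, and a length $n$ subscheme $Z\subset C$. Because $x$ lies over the \emph{stable} locus $\mathcal{Z}_n^o$, the condition on cones in Construction \ref{cons:N}(6) places each point of $Z$ in the interior of a two dimensional component of $X_{\hat{\Gamma}}$, hence in the smooth locus of the expansion.

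First I would pin down the local algebra of $C$. By Construction \ref{cons:univsubsch} the curve $C$ is the zero scheme of a section of a line bundle on $X_{\hat{\Gamma}}$, so it is an effective Cartier divisor, in particular Cohen--Macaulay of pure dimension one (hence torsion free). By Lemma \ref{lem:strongtransstab} the curve $C$ meets the toric boundary of $X_{\hat{\Gamma}}$ only transversely and avoids the codimension two strata, which is exactly where the (possibly non-Gorenstein) toric singularities and the broken--surface triple points are concentrated. Thus $X_{\hat{\Gamma}}$ is smooth or normal crossing, and in either case Gorenstein, in a neighbourhood of $C$; being a Cartier divisor in a Gorenstein surface, $C$ is therefore itself Gorenstein, and locally planar along $Z$. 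This Gorenstein property is the essential input for the length count, and I expect the verification that strong transversality genuinely keeps $C$ off the non-Gorenstein locus to be the main point requiring care.

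Next I would dualise the structure sequence $0\to\mathcal{J}_Z\to\mathcal{O}_C\to\mathcal{O}_Z\to 0$ by applying $\mathcal{H}om_{\mathcal{O}_C}(-,\mathcal{O}_C)$. Since $\mathcal{O}_Z$ is torsion and $\mathcal{O}_C$ is torsion free we have $\mathcal{H}om_{\mathcal{O}_C}(\mathcal{O}_Z,\mathcal{O}_C)=0$, while $\mathcal{E}xt^1_{\mathcal{O}_C}(\mathcal{O}_C,\mathcal{O}_C)=0$ as $\mathcal{O}_C$ is free over itself, so the long exact sequence collapses to
$$0\to\mathcal{O}_C\xrightarrow{\ s\ }\mathcal{J}_Z^\star\to\mathcal{E}xt^1_{\mathcal{O}_C}(\mathcal{O}_Z,\mathcal{O}_C)\to 0.$$
This identifies $s$ with the map in the statement, shows $s$ is injective, and exhibits $\mathsf{coker}(s)=\mathcal{E}xt^1_{\mathcal{O}_C}(\mathcal{O}_Z,\mathcal{O}_C)$. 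Purity of $F=\mathcal{J}_Z^\star$ is then automatic, since the dual of any coherent sheaf with respect to the pure one dimensional sheaf $\mathcal{O}_C$ is torsion free, hence pure of dimension one.

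Finally I would compute the length of the cokernel. The sheaf $\mathcal{E}xt^1_{\mathcal{O}_C}(\mathcal{O}_Z,\mathcal{O}_C)$ is supported on the zero dimensional scheme $Z$. Working locally at a point of $Z$, where $\mathcal{O}_C$ is a one dimensional Gorenstein local ring by the second paragraph, local duality identifies $\mathcal{E}xt^1_{\mathcal{O}_C}(\mathcal{O}_Z,\mathcal{O}_C)$ with the Matlis dual of $\mathcal{O}_Z$, which has the same finite length; summing over the points of $Z$ gives $\mathsf{length}\,\mathsf{coker}(s)=\mathsf{length}\,\mathcal{O}_Z=n$. Pushing the pair forward along $\mathcal{O}_{X_{\hat{\Gamma}}}\to\mathcal{O}_C$, the kernel of the composite is the ideal sheaf of the curve $C$, whose class is $\beta$ as $C$ is a fibre of $\mathcal{Z}_n^o$. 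Together with the purity and the length computation this checks all the conditions, so $[\mathcal{O}_C\to\mathcal{J}_Z^\star]$ is a stable pair.
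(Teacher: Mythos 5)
Your proof is correct and is essentially the paper's own argument: the paper's proof of Lemma \ref{lem:stablepairhilb} consists entirely of the citation to \cite[Proposition B.5]{PT}, and your dualised structure sequence $0\to\mathcal{O}_C\to\mathcal{J}_Z^\star\to\mathcal{E}xt^1_{\mathcal{O}_C}(\mathcal{O}_Z,\mathcal{O}_C)\to 0$ together with the Gorenstein local-duality length count is precisely that argument. Your added verification that the stability condition of Construction \ref{cons:N}(6) and Lemma \ref{lem:strongtransstab} keep $Z$ in the locally planar (hence Gorenstein) locus of the expansion is exactly the check needed to transplant the PT argument to this setting, so nothing is missing.
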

	\begin{proof}
		See the proof of \cite[Proposition B.5]{PT}.
	\end{proof}
	
	Choose a closed point of $\mathrm{Hilb}^{[n]}(\mathcal{Z}_n'^o/\mathpzc{P}_n)$: this specifies a subscheme $Y$ of a curve $C$ on an expansion $X_{{\Gamma_k}}$ such that no component of $Y$ lies in the logarithmic boundary of $C$. We obtain a function $w'$ from the irreducible components of $X_{{\Gamma_k}}$ to $\mathbb{N}$ defined by assigning the length of $v \cap Y$ to component $v$.
	
	\begin{lemma}\label{lem:stabconditionopen}
		To each point $x$ of $\mathrm{Hilb}^{[n]}(\mathcal{Z}_n^o/\mathpzc{P}_n)$ there is a corresponding point $p(x)$ in $\mathpzc{P}_n$. The condition $w(p(x)) = w'(x)$ is open on $\mathrm{Hilb}^{[n]}(\mathcal{Z}_n^o/\mathpzc{P}_n)$.
	\end{lemma}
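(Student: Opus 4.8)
The plan is to show that the good locus $G = \{\, x : w(p(x)) = w'(x)\,\}$ is stable under generisation; since $G$ is cut out by constructible conditions on a Noetherian scheme, this is equivalent to its being open. Here $p \colon \mathsf{Hilb}^{[n]}(\mathcal{Z}_n^o/\mathcal{P}_{\Delta,n}) \to \mathcal{P}_{\Delta,n}$ is the structure morphism to the base, the combinatorial weight $w(p(x))$ is locally constant along the boundary stratification of $\mathcal{P}_{\Delta,n}$ (it reads off the $n$--weighted tropical curve of the stratum containing $p(x)$), and $w'(x)$ is computed from the universal length--$n$ subscheme. By the valuative reformulation it suffices to treat a map $\Spec R \to \mathsf{Hilb}^{[n]}(\mathcal{Z}_n^o/\mathcal{P}_{\Delta,n})$ from a discrete valuation ring whose special point $x_0$ lies in $G$, and to deduce that the generic point $x_\eta$ also lies in $G$.

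First I would record the two additivity statements that drive the argument. Passing from $x_\eta$ to its specialisation $x_0$ either keeps $p$ inside a single stratum or pushes it into a deeper one; in the latter case the expansion $\varpi_n^{-1}(p(x_0))$ breaks the generic expansion $\varpi_n^{-1}(p(x_\eta))$ into more components. By Lemma \ref{lem:toricvar} and the tropical description of Proposition \ref{prop:tropmod}, each component $v$ of the generic expansion is dominated by a set $S_v$ of components of the special expansion, and the tropical marked points distribute so that the combinatorial weights are additive: $w(v) = \sum_{v' \in S_v} w(v')$. The analogous statement for the geometric weights, $w'(v) = \sum_{v' \in S_v} w'(v')$, is precisely the assertion that under the specialisation no length of the universal subscheme migrates onto the boundary divisors or onto the nodes separating the regions indexed by distinct generic components.

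The heart of the proof is this no--escape statement, and it is where I expect the main difficulty. Two inputs combine to supply it: flatness and properness of the relative Hilbert scheme, which fix the total length at $n$ and hence conserve it; and the strong transversality of Lemma \ref{lem:strongtransstab} together with the removal of the point cones in step (6) of Construction \ref{cons:N}, which keeps the subschemes parametrised by $\mathcal{Z}_n^o$ disjoint from the codimension--one and codimension--two boundary strata of the fibres. Concretely I would argue that upper semicontinuity of length bounds $w'(v)(x_0)$ below by the length carried over the region of $v$ at $x_\eta$, while transversality forbids any length from accumulating on the boundary or on the newly created nodes; conservation of the total then forces these inequalities to be equalities region by region, yielding the additivity $w'(v) = \sum_{v' \in S_v} w'(v')$.

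Finally I would combine the pieces. Since $x_0 \in G$ we have $w(v') = w'(v')$ for every component $v'$ of the special expansion, so the two additivity identities give, for each component $v$ of the generic expansion,
\[
w(v) \;=\; \sum_{v' \in S_v} w(v') \;=\; \sum_{v' \in S_v} w'(v') \;=\; w'(v),
\]
whence $x_\eta \in G$. This establishes stability of $G$ under generisation, and hence its openness. The case in which $p$ remains within a single stratum is the special case $S_v = \{v\}$, where the same semicontinuity--plus--conservation argument shows that the length on each component is unchanged.
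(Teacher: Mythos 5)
Your proposal is correct and follows essentially the same route as the paper: reduce openness to stability under generisation via a DVR, use the specialisation map of weighted tropical curves to get additivity $w(v)=\sum_{u_i\to v}w(u_i)$, and transfer the condition from the special to the generic fibre using the fact that the special subscheme is the flat limit (closure) of the generic one, with closures of generic components contained in the corresponding unions of special components. Your semicontinuity-plus-conservation counting merely makes explicit what the paper leaves implicit (and note the point cones are removed in step (2) of Construction \ref{cons:N}, with step (6) enforcing the component-cone condition), so the two arguments coincide in substance.
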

	
	\begin{proof}
		This proof is a minor modification of \cite[Lemma 4.2.2]{MR20}.
		We check our condition is preserved under generisation. Let $S$ be the spectrum of a discrete valuation ring with central fibre $0$ and generic fibre $\eta$. Suppose we have a morphism $f:S \rightarrow \mathrm{Hilb}^{[n]}(\mathcal{Z}_n^o/\mathpzc{P}_n)$ such that the image of $0$ satisfies our condition. We must show the condition holds for the image of $\eta$ as well.
		
		First consider the strata in which the image of $0$ and $\eta$ lie. Consider the  cones in the tropicalisation of $\mathcal{Z}_n^o$ corresponding to these strata denoted $F_\eta$ and $F_0$. Observe $F_\eta$ is a face of $F_0$. There are associated weighted tropical curves $(\Gamma_0,p_1,...,p_{k_0},w_0)$ and $(\Gamma_\eta,{p_1,...,p_{k_\eta}},w_\eta)$ and there is a map between the associated tropical curves $\Gamma_0' \rightarrow \Gamma_\eta'$ (constructed explicitly in \cite[Lemma 4.2.2]{MR20}). Observe for $v$ a vertex of $\Gamma_\eta$ and suppose $u_1,...,u_k$ are those vertices of $\Gamma_0$ which map to $v$ under this specialisation map. Notice: $$w(v) = \sum_{i=1}^kw(u_i).$$
		
		Now consider the dimension zero subschemes $Y_0$ and $Y_\eta$ corresponding to the image of $0$ and $\eta$ respectively. By assumption in component $u_i$ length $w(u_i)$ of $Y_0$ is supported. We fix a component of $X_{\Gamma_\eta}$ labelled by vertex $v$. Observe $Y_0$ is the closure of $Y_\eta$ in the family $f^\star(\mathcal{Z}_n^o)$. Since the closure of the component corresponding to vertex $v$ is contained in the union of the components corresponding to $u_i$ to which $v$ specialises, and this holds for all vertices of $\Gamma_\eta$ it follows that length $w(v)$ is supported on $v$.
	\end{proof}
	\begin{construction}\label{cons:PTNandunivfam}
		(The logarithmic stable pairs space $\PTnXbeta$.) The logarithmic stable pairs space $\PTnXbeta$ is defined by equipping a Deligne--Mumford stack with a logarithmic structure. The Deligne--Mumford stack is the open subset of $\mathrm{Hilb}^{[n]}(\mathcal{Z}_n^o/\mathpzc{P}_n)$ whose closed points satisfy the condition in Lemma \ref{lem:stabconditionopen}. The logarithmic structure is pulled back along the map $$\PTnXbeta \rightarrow \mathpzc{P}_n.$$
		
		We construct the universal expansion and universal stable pair. Consider the universal family for the relative Hilbert scheme of points:
		
		$$
		\begin{tikzcd}
		Y_{n,abs} \arrow[r, hook] \arrow[rd] & {\mathpzc{X}_{n,abs}} \arrow[d,"\varpi_{n,abs}"]                             \\
		& {\mathrm{Hilb}^{[n]}(\mathpzc{X}_n/\mathpzc{P}_n)}
		\end{tikzcd}.$$
		Consider the composition of inclusions $$\iota:\PTnXbeta \rightarrow  \mathrm{Hilb}^{[n]}(\mathcal{Z}_n^o/\mathpzc{P}_n)\rightarrow {\mathrm{Hilb}^{[n]}(\mathpzc{X}_n/\mathpzc{P}_n)}.$$ There is a corresponding inclusion of universal families. We define $$\mathcal{X}_{n} = \varpi_{n,abs}^{-1}\left(\iota(\PTnXbeta)\right).$$ Taking the preimage of $Y_{n,abs}$ under the corresponding morphism of universal families we obtain the following diagram: $$
		\begin{tikzcd}
		Y_n \arrow[r] & {\mathcal{X}_{n}} \arrow[d] \\
		& {\PTnXbeta}     
		\end{tikzcd}.$$ The ideal sheaf of $Y_n$ is denoted $\mathcal{J}_{Y_n}$. The universal expansion carries a universal curve $\mathcal{Z}_n$ arising from $\mathcal{Z}_n'\hookrightarrow \mathcal{X}_{n,\mathrm{PT}}$ built in Construction \ref{cons:Znprime}. The universal stable pair is
		$$[s:\mathcal{O}_{\mathcal{Z}_n}\rightarrow \mathcal{J}_{Y_n}^\star].$$
	\end{construction}

	\begin{lemma} \label{lem:PTinverse}
		Lemma \ref{lem:stablepairhilb} induces a bijection between points of $\PTnXbeta$ and stable pairs which are strongly transverse and stable.
	\end{lemma}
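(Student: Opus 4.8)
The plan is to establish the bijection by constructing mutually inverse maps, building directly on the modular interpretation already developed. Recall that Lemma~\ref{lem:stablepairhilb} sends a point of the relative Hilbert scheme $\mathsf{Hilb}^{[n]}(\mathcal{Z}_n^o/\mathcal{P}_{\Delta,n})$---the data of a zero-dimensional subscheme $Z$ of length $n$ inside a strongly transverse and stable curve $C$ on an expansion $X_{\hat{\Gamma}}$---to the stable pair $[s:\mathcal{O}_C \rightarrow \mathcal{J}_Z^\star]$. The forward direction is then essentially immediate: restricting this map to the open locus $\mathsf{\mathcal{PT}}_n(X,\beta)$ cut out in Construction~\ref{cons:PTNandunivfam}, I must check that the resulting stable pair is strongly transverse and stable. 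Condition~(2) holds because $\mathsf{ker}(s)$ is precisely the curve $C$, which is strongly transverse and stable by construction of $\mathcal{Z}_n^o$ (its fibres are strongly transverse and stable curves, via Lemma~\ref{lem:strongtransstab} and the analysis in Section~\ref{sec:one}). Condition~(1)---that length $w(v)$ of $\mathsf{coker}(s)$ is supported on each two-dimensional component $v$---is exactly the defining condition $w(p(x)) = w'(x)$ of Lemma~\ref{lem:stabconditionopen}, which holds on $\mathsf{\mathcal{PT}}_n(X,\beta)$ by definition.

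For the inverse, I would start from a strongly transverse and stable pair $[s:\mathcal{O}_{X_{\hat{\Gamma}}}\rightarrow F]$ and recover a point of $\mathcal{PT}_n(X,\beta)$. The first step is to extract the curve: the scheme-theoretic support of $F$ (equivalently the subscheme cut out by $\mathsf{ker}(s)$, which is the ideal sheaf of a curve of class $\beta$) is a strongly transverse and stable curve $C \hookrightarrow X_{\hat{\Gamma}}$. By Theorem~\ref{thm:univcurve} this curve corresponds to a point of $\mathcal{PT}_0(X,\beta)$, and the weighting data of $\hat{\Gamma}$ together with this curve pins down a point of $\mathcal{P}_{\Delta,n}$ and hence an expansion $X_{\hat\Gamma} \cong \varpi_n^{-1}(p)$ via Lemma~\ref{lem:toricvar}. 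The second step is to recover the zero-dimensional subscheme: applying the argument of \cite[Proposition B.5]{PT} (invoked in Lemma~\ref{lem:stablepairhilb}), the cokernel data of the stable pair determines a length-$n$ subscheme $Z \subset C$, which is a point of the relative Hilbert scheme $\mathsf{Hilb}^{[n]}(\mathcal{Z}_n^o/\mathcal{P}_{\Delta,n})$; condition~(1) of strong transversality guarantees that the support weights of $Z$ match $w$, so the point lands in the open locus $\mathcal{PT}_n(X,\beta)$.

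It then remains to verify that these two assignments are mutually inverse. In one direction this follows from the standard fact, due to Pandharipande--Thomas, that a stable pair is recovered from its kernel curve and cokernel subscheme; in the other, that the subscheme and curve extracted from the pair built in Lemma~\ref{lem:stablepairhilb} return the original data. Both verifications are formal consequences of the equivalence in \cite[Proposition B.3, B.5]{PT} applied fibrewise over each expansion, combined with the bijections of Theorem~\ref{thm:univcurve} and Lemma~\ref{lem:toricvar}, which identify the base data on both sides.

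I expect the main obstacle to be the bookkeeping between the \emph{ordered} data underlying Construction~\ref{cons:N}---where a point of $(\mathcal{P}_{\Delta,n}^\mathfrak{t})''$ records an ordered tuple $((f_1,g),\dots,(f_n,g))$---and the \emph{unordered} geometric content of the stable pair, namely its cokernel as a length-$n$ subscheme. The symmetric group quotient in step~(5) of Construction~\ref{cons:N} is precisely what reconciles these, and the delicate point is confirming that this quotient, together with the stability subdivisions of steps~(3) and~(6), exactly matches the locus where the support multiplicities $w(v)$ are achievable and the subscheme remains disjoint from the divisorial boundary. Verifying that no additional identifications or degenerate configurations slip through---that the open condition of Lemma~\ref{lem:stabconditionopen} carves out exactly the image of the strongly transverse and stable pairs---is where the substantive checking lies, though it is controlled entirely by the tropical correspondence already established.
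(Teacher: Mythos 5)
Your proposal is correct and takes essentially the same route as the paper: the paper's proof simply writes down the inverse map, sending a stable pair with cokernel $Q$ to the subscheme cut out by the dual of $\mathcal{O}_C \cong \mathcal{E}xt^0(\mathcal{O}_C,\mathcal{O}_C) \rightarrow \mathcal{E}xt^1(Q,\mathcal{O}_C)$ --- exactly the Pandharipande--Thomas duality of \cite[Propositions B.3, B.5]{PT} that you invoke --- and declares it ``clearly inverse.'' Your write-up is in fact more careful than the paper's one-line argument (spelling out the weight-matching via Lemma~\ref{lem:stabconditionopen} and the identification of the base point via Theorem~\ref{thm:univcurve}), but it is the same approach, not a different one.
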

	
	\begin{proof}
		We write down the inverse map. To a stable pair $[s:\mathcal{O}_X\rightarrow F]$ we assign the point of $\mathrm{Hilb}^{[n]}(\mathcal{Z}_n^o/\mathpzc{P}_n)$ specified by the subscheme defined by the dual map to 
		$$q:\mathcal{O}_C \cong \mathcal{E}xt^0(\mathcal{O}_C,\mathcal{O}_C) \rightarrow \mathcal{E}xt^1(Q,\mathcal{O}_C)$$ where we define $Q = \mathrm{coker}(s)$. The dual map defines a subscheme because $q$ is surjective. Indeed we have an exact sequence $$\mathcal{O}_C \cong \mathcal{E}xt^0(\mathcal{O}_C,\mathcal{O}_C) \xrightarrow{q} \mathcal{E}xt^1(Q,\mathcal{O}_C)\rightarrow \mathcal{E}xt^1(F,\mathcal{O}_C)$$ and the third term is zero \cite[Lemma B.2]{PT2}. 
		This is clearly inverse to the map in Lemma \ref{lem:stablepairhilb}.
	\end{proof}
	
	\begin{remark}
		There is a forgetful morphism $\mathcal{PT}_{n}(X,\beta) \rightarrow \mathcal{PT}_{0}(X,\beta) $. Furthermore $$\mathcal{PT}_{0}(X,\beta) = \mathpzc{P}_{0}.$$
	\end{remark}	
	\subsection{Kool--Thomas style construction}
	The following construction identifies ${\mathcal{PT}}_n$ as the zero set of a specific section of a vector bundle inside an open subset of $\mathrm{Hilb}^{[n]}(\mathpzc{X}_n^o /\mathpzc{P}_n)$. 
	
	\begin{construction}\label{cons:KT}
		Consider the relative Hilbert scheme of points equipped with its universal family and universal subscheme
		$$ 
		\begin{tikzcd}
		{Y_{n,abs}^o} \arrow[r, hook] & {{\mathcal{X}_{n,abs}^o}} \arrow[d, "q"] \arrow[r, hook]                     & {{\mathcal{X}_{n,abs}}} \arrow[d, "q'"]                    \\
		& {\mathrm{Hilb}^{[n]}(\mathcal{X}_n^o /\mathpzc{P}_n)} \arrow[r, hook] & {\mathrm{Hilb}^{[n]}(\mathcal{X}_n /\mathpzc{P}_n)}
		\end{tikzcd}.$$
		Observe $\mathcal{X}_n^o$ comes equipped with universal curve $\mathcal{C}$ which is a divisor and pulls back to a divisor on $\mathcal{X}_{n,abs}^o$. Such a divisor induces a line bundle and canonical section $(\mathcal{O}(\mathcal{C}), s_{\mathcal{O}(\mathcal{C})})$. Restrict this vector bundle to $Y^o_{n,abs}$. Push these data down $q$ giving a sheaf $\mathcal{O}(\mathcal{C})^{[n]}$ equipped with section $s$. 
		
		The sheaf $\mathcal{O}(\mathcal{C})^{[n]}$ is locally free so defines a vector bundle. To see this first observe the morphism $q$ restricted to $Z$ is projective. It suffices to check $q'$ restricted to the universal curve in $\mathcal{X}_{n,abs}$ is projective. Restrict $q'$ morphism to the universal curve $\mathcal{C'} \hookrightarrow \mathcal{X}_{n,abs}$. This morphism is finite and the domain is proper. 
		
		We deduce $R^{\geq 1}\pi_\star(\mathcal{O}(C))=0$ by \cite[Theorem 12.11]{Hartshorne} recalling the fibre $X_y$ over every point $y$ is zero dimensional so $H^i(X_y,\mathcal{O}(C)_y)=0$. It follows that $\mathcal{O}(\mathcal{C})^{[n]}$ is locally free by \cite[Corollary 12.9 (Grauert)]{Hartshorne} The zeroes of $s$ are points of $\mathrm{Hilb}^{[n]}(\mathcal{X}_n^o /\mathpzc{P}_n)$ parametrising triples $$Y \subset C \subset X.$$ Here $Y$ is a zero dimensional subscheme of length $n$.
	\end{construction}
	
	\section{Relation to Maulik and Ranganathan's logarithmic Donaldson--Thomas spaces} The spaces $\PTnXbeta$ represent moduli functors. The moduli problems are closely related to the logarithmic Donaldson--Thomas moduli functors defined in \cite{MR20}. In this section we describe these moduli problems and prove Theorem \ref{thm:main}.
	
	We begin by recalling how to handle families of expansions. We first define an object closely related to the \textit{moduli spaces of expansions} $\mathrm{Exp}$ studied in \cite[Section~3.4]{MR20}. This moduli space of expansions carries a universal family $\pi_{\mathrm{Exp}}:\mathcal{Y}\rightarrow \mathrm{Exp}$ defined in \cite[Section~3.7]{MR20}. Fix an integer $n$ and consider the morphism from Construction \ref{cons:N} 	$$\tilde{\pi}_n:\tilde{\mathpzc{X}}_n \rightarrow \tilde{\mathpzc{P}}_{n}.$$
	Choose a splitting for the dense torus in $\tilde{\mathpzc{X}}_n$ written $\tilde{\mathpzc{X}}_{n}^o = T \oplus \mathrm{ker}(\tilde{\pi}_n)$ and denote the dense torus of $\tilde{\mathpzc{P}}_{n}$ by $(\tilde{\mathpzc{P}}_{n})^o$. Define quotient stacks $$\tilde{\mathrm{Exp}}_n = \tilde{\mathpzc{P}}_{n}/(\tilde{\mathpzc{P}}_{n})^o\quad \quad \tilde{\mathpzc{X}}_{\mathrm{Exp}_n}=\tilde{\mathpzc{X}}_n/T.$$ The morphism $\tilde{\pi}_n$ passes to the quotient. Further the symmetric group on $n$ letters acts equivalently: we further quotient by this group action to obtain $\pi_{\mathrm{Exp}_n}:\mathcal{X}_{\mathrm{Exp}_n}\rightarrow \mathrm{Exp}_n$. The diagram of quotient stacks 
	$$
	\begin{tikzcd}
	\mathcal{X}_{\mathrm{Exp}_n} \arrow[d, "\pi_{\mathrm{Exp}_n}"'] \arrow[r, "\pi_{X}"] & X \\
	\mathrm{Exp}_n                                                               &  
	\end{tikzcd}$$
	defines a map from $\mathrm{Exp}_n$ to the stack $\mathrm{Exp}$ defined by Maulik and Ranganathan along which the universal data $\mathcal{Y} \rightarrow \mathrm{Exp}$ pulls back to $\pi_{\mathrm{Exp}_n}$. Let $\underline{\mathrm{Exp}}$ be the image of this morphism and $\mathcal{X}_{\underline{\mathrm{Exp}}}$ its preimage in the universal family.

	\begin{definition} The \textit{moduli space of logarithmic stable pairs} on $X$ denoted $\mathcal{PT}_n^{\mathrm{MR}}$ is the category fibred in groupoids whose objects over $B$ are the following data. 
		\begin{enumerate}[(1)]
			\item A morphism $B\rightarrow \underline{\mathrm{Exp}}$. Pulling back $\mathcal{X}_{\underline{\mathrm{Exp}}}$ furnishes a family of expansions
			$$	
			\begin{tikzcd}
			\mathcal{X}_B \arrow[d, "\pi_B"] \arrow[r, "{\pi_{X}}"] & X \\
			B                                                         &  
			\end{tikzcd}.$$
			
			\item A stable pair $s$ in $D^b(\mathcal{X}_B)$ such that the restriction of $s$ to $\pi_n^{-1}(x)$ for each closed point $x$ in $B$ is strongly transverse and stable.
		\end{enumerate}
	\end{definition}
	\begin{theorem}\label{thm:modularity}
		There is an isomorphism of Deligne--Mumford stacks $$\PTnXbeta\tilde{\rightarrow}\mathcal{PT}_n^{\mathrm{MR}}.$$ This isomorphism identifies the universal expansion and stable pair of the domain with the expansion $\UM_{n}$ and universal stable pair from Construction \ref{cons:PTNandunivfam}.
	\end{theorem}
	
	\begin{proposition}
		Theorem \ref{thm:modularity} holds when $n=0$.
	\end{proposition}
	
	\begin{proof}
		First observe $\mathcal{PT}_0^{\mathrm{MR}}$ is logarithmically smooth and thus normal. Indeed we check the natural map $$\mathcal{PT}_0^{\mathrm{MR}}\rightarrow \mathrm{Exp}_n$$ is smooth, this implies the domain is logarithmically smooth. This map admits a perfect two term deformation obstruction theory. Consider point $y$ corresponding to $Z \rightarrow X_\Gamma \rightarrow X$ where $Z$ has ideal sheaf $J_Z$. The obstruction space associated with this point is $$\mathrm{Ext}^2(\mathcal{J}_Z,\mathcal{J}_Z)$$ which vanishes since $\mathcal{J}_Z$ is locally free. Indeed $J_Z$ is globally generated by a section $f$ - the pullback of $\mathcal{O}(1)$. The map is thus unobstructed and since $\mathrm{Exp}_n$ is logarithmically smooth, so is $\mathcal{PT}_0^{\mathrm{MR}}$.
		
		Since $\mathcal{PT}_0$ carries a universal family, the universal property of $\mathcal{PT}_0^{\mathrm{MR}}$ defines a fibre square $$
		\begin{tikzcd}
		\mathcal{X}_0 \arrow[r] \arrow[d]          & \mathcal{X}_0' \arrow[d]   \\
		{\mathcal{PT}_0} \arrow[r, "u_0"] & {\mathcal{PT}_0^{\mathrm{MR}}}
		\end{tikzcd}.$$
		
		The morphism $u_0$ descends to a map on coarse moduli spaces $$\tilde{u}_0: {PT}_0(X,\beta) \rightarrow {PT}_0(X,\beta)^\mathrm{MR}.$$ First check $\tilde{u}_0$ is an isomorphism. Indeed Theorem \ref{thm:univcurve} tells us $\tilde{u}_0$ is a bijection on points. Furthermore $\tilde{u}_0$ is birational, being an isomorphism on the dense open subscheme parameterising strongly transverse and stable curves on unexpanded $X$. Since the target is normal, by Zariski's main theorem $\tilde{u}_0$ is an isomorphism.
		
		We check $\tilde{u}_0$ induces an identification of coarse moduli  $\mathcal{X}_0'\times_{\mathcal{PT}_0^{\mathrm{MR}}} {\mathcal{PT}_0(X,\beta)} =\mathcal{X}_0 $. Combined with the map $\mathcal{X}_0' \rightarrow {\mathcal{PT}_0(X,\beta)}$ we obtain an (obviously birational) map $\mathcal{X}_0'\rightarrow \mathcal{X}_0$ which is a bijection on points. The target is normal because it was constructed as a normal toric variety so by Zariski's main theorem this map is an isomorphism on coarse moduli spaces.
		
		It remains to upgrade the isomorphism of coarse moduli spaces and universal families to an isomorphism of Deligne--Mumford stacks. This is an immediate consequence of the universal property of universal weak semistable reduction \cite{molcho2019universal}. This universal property guarentees a terminal way to upgrade $\tilde{u}_0^{-1}$ to a morphism of Deligne--Mumford stacks $u_0^{-1}$ compatible with universal families. Universality ensures $u_0^{-1}$ is the inverse of $u_0$.
	\end{proof}

	\begin{proof}[Proof of Theorem \ref{thm:modularity} for $n>0$]
		Note there are morphisms:	
		$$\mathcal{PT}_n^{\mathrm{MR}}\rightarrow \mathcal{PT}_0^{\mathrm{MR}}\xrightarrow{u_0^{-1}} \mathcal{PT}_0(X,\beta).$$
		The cokernel of the universal stable pair specifies a subscheme $Y_n'$ inside the universal curve $\mathcal{Z}_0' \rightarrow \mathcal{PT}_0(X,\beta)$. On each fibre $Y_n'$ is zero dimensional and length $n$. We obtain a map to $\PTnXbeta$ an open subset of $\mathrm{Hilb}^{[n]}(\mathcal{Z}_n^o/\mathpzc{P}_n)$ by the universal property of this relative Hilbert scheme $$u_n^{-1}:\mathcal{PT}_n^{\mathrm{MR}}\rightarrow \PTnXbeta.$$ This morphism arose from a universal property and thus is compatible with universal families. Consequently it is an inverse to $u_n$.
	\end{proof}
	
	\subsection{Proof of Theorem \ref{thm:main}}
	\begin{proof}[Proof of Theorem \ref{thm:main}]
		We start with the case $n=0$. Maulik and Ranganathan's inverse system arose from the possible polyhedral structures on our moduli space of tropical curves. Following \cite{MR20} to show $\mathcal{PT}_0(X,\beta)$ is a terminal object we must show as a cone complex every moduli space of tropical curves is a subdivision of $\mathrm{Exp}$. But combinatorial type of tropical curve must be constant on cones of any moduli space of tropical curves. No two maximal cones of $\mathrm{Exp}$ correspond to the same combinatorial type of tropical curve. 
		
		On any moduli space of weighted tropical curves the combinatorial type is constant on cones. Since there is at most one maximal cone of $\PTnXbeta$ for each combinatorial type of weighted tropical curve our theorem is proved.
	\end{proof}
	
	\begin{remark}
		Maulik and Ranganathan consider moduli spaces of tropical curves without weights. This nuance does not effect the isomorphism class of the resulting moduli space. 
	\end{remark}
	\begin{remark}\label{rem:toricStructures}
		The logarithmic structure on Maulik and Ranganathan's moduli space (which is pulled back from the logarithmic structure on their version of $\mathrm{Exp}$) differs from the toric logarithmic structure for our moduli space. In this remark we give two perspectives on the difference.
		
		Tropically the difference in logarithmic structure can be understood as follows. Points in the tropical object used to construct the \cite{MR20} version of $\mathrm{Exp}$ biject with tropical curves inside the fan of $X$. By contrast, points in the fan $|\Delta|$ do not biject with tropical curves, see Example \ref{ex:TropNotBiject}. Thus the logarithmic stratification in \cite{MR20} is more coarse than the logarithmic stratification in our construction.
		
		Another way of understanding the difference in logarithmic structure is by studying the locus where the ghost sheaf is trivial. In logarithmic linear system this locus is the dense torus and corresponds to curves in $X$ with defining equation $$\sum_{{(i,j)}\in \Delta}\lambda_{i,j} X^iY^j=0$$ where $\lambda_{i,j}$ is non-zero for all $i$ and $j$. In the \cite{MR20} space the locus with trivial ghost sheaf includes points where $\lambda_{i,j}$ is non-zero for only $(i,j)$ a corner of $\Delta$ and is thus larger.
	\end{remark}
	
	\section{Computing Euler--Satake characteristics}\label{sec:algorithm}
	In this section we compute the Euler--Satake characteristic of $\PTnXbeta$. Throughout we use $\chi$ to denote Euler--Satake characteristic with compact support. The Euler--Satake characteristic is an invariant of an orbifold closely related to the Euler characteristic of the coarse moduli space. Indeed in the case $\PTnXbeta$ is a scheme, the Euler--Satake characteristic coincides with the Euler characteristic.
	
	\subsection{The Euler--Satake characteristic for absolute Hilbert schemes of points}
	\subsubsection{Standard results from Algebraic Topology} Let $X$ be a Deligne--Mumford stack and $G$ a finite group. The Euler--Satake characteristic of a Deligne--Mumford stack which is also a scheme coincides with the usual topological Euler characteristic. The following property then characterises the Euler--Satake characteristic $$\chi([X/G])=\frac{\chi(X)}{|G|}.$$ 
	We begin with two results from algebraic topology. 
	
	\begin{lemma}\label{lem:algtop1}
		Let $X$ be a topological space containing an open set $U$ with complement $Z$. Then the following equality holds:
		\begin{equation}\label{eulerchar}
		\chi(X) = \chi(U)+\chi(Z)
		\end{equation}
	\end{lemma}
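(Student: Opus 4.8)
The plan is to prove additivity via the long exact sequence in compactly supported cohomology. First I would recall that, since every space in play is of finite type over $\mathbb{C}$, the compactly supported Euler characteristic may be computed as $\chi(X)=\sum_i(-1)^i\dim_{\mathbb{Q}}H^i_c(X;\mathbb{Q})$, with all these groups finite dimensional and vanishing outside a finite range of degrees. It is worth emphasising that this is precisely the version of Euler characteristic for which additivity holds; the naive Euler characteristic of ordinary singular cohomology does not enjoy this property, which is why the compactly supported convention is adopted at the start of this section.

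Next I would produce the relevant long exact sequence. Writing $j:U\hookrightarrow X$ for the open inclusion and $i:Z\hookrightarrow X$ for the complementary closed inclusion, there is a short exact sequence of sheaves on $X$
$$0 \to j_!\mathbb{Q}_U \to \mathbb{Q}_X \to i_*\mathbb{Q}_Z \to 0.$$
Applying compactly supported cohomology and invoking the standard identifications $H^\bullet_c(X,j_!\mathbb{Q}_U)\cong H^\bullet_c(U)$ and $H^\bullet_c(X,i_*\mathbb{Q}_Z)\cong H^\bullet_c(Z)$ yields the long exact sequence
$$\cdots \to H^i_c(U) \to H^i_c(X) \to H^i_c(Z) \to H^{i+1}_c(U) \to \cdots.$$

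The final step is purely formal. The alternating sum of dimensions along any exact sequence of finite dimensional vector spaces vanishes; applying this to the long exact sequence above and tracking signs, so that within each degree $i$ the three terms $H^i_c(U)$, $H^i_c(X)$, $H^i_c(Z)$ occupy consecutive positions and hence contribute with signs $+,-,+$ after factoring out $(-1)^i$, one obtains $\chi(U)-\chi(X)+\chi(Z)=0$. Rearranging gives exactly \eqref{eulerchar}.

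There is no substantial obstacle here: the result is classical, and the only points requiring care are the finiteness of the cohomology groups (guaranteed by finite type over $\mathbb{C}$) and the sign bookkeeping in passing from the long exact sequence to the alternating sum. The single genuine external input is the existence of the long exact sequence in compactly supported cohomology, which I would cite from a standard reference rather than reprove.
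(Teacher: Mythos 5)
Your long-exact-sequence argument is correct as a proof of additivity for the \emph{ordinary} compactly supported Euler characteristic: the short exact sequence $0 \to j_!\mathbb{Q}_U \to \mathbb{Q}_X \to i_*\mathbb{Q}_Z \to 0$, the identifications $H^\bullet_c(X,j_!\mathbb{Q}_U)\cong H^\bullet_c(U)$ and $H^\bullet_c(X,i_*\mathbb{Q}_Z)\cong H^\bullet_c(Z)$, and the vanishing of the alternating sum along the resulting long exact sequence are all standard and correctly assembled. Indeed, this is precisely the classical ingredient that the paper's proof invokes in its final sentence (``the corresponding statement for compactly supported Euler characteristic'') without reproving, so you have in effect supplied a proof of the input the paper treats as known.

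However, relative to what the lemma is actually asserting in context, there is a genuine gap. Despite the wording ``topological space,'' Section \ref{sec:algorithm} fixes the convention that $\chi$ denotes the \emph{Euler--Satake} characteristic with compact support, and the lemma is later applied to Deligne--Mumford stacks with nontrivial stabilisers (for instance $H_{k,n}$). For such a stack your starting identification $\chi(X)=\sum_i(-1)^i\dim_{\mathbb{Q}}H^i_c(X;\mathbb{Q})$ is false: already $\chi([\mathrm{pt}/G])=1/|G|$, whereas the compactly supported cohomology of the underlying space of a point gives $1$. The paper's proof bridges exactly this gap: it stratifies $X=\bigcup_H Y_H$ into locally closed strata of constant stabiliser group $H$, characterises the Euler--Satake characteristic as the weighted sum $\sum_H \frac{1}{|H|}\chi_c(Y_H)$, observes that the open/closed decomposition restricts compatibly to each stratum ($U\cap Y_H$ is open in $Y_H$ with closed complement $Z\cap Y_H$, and the stabiliser stratifications of $U$ and $Z$ are induced from that of $X$), and only then applies the classical additivity of $\chi_c$ stratum by stratum. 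Your proposal proves the lemma only in the case of trivial stabilisers; to complete it, prepend the stabiliser stratification and the weighted-sum characterisation, and run your long-exact-sequence argument on each stratum $Y_H$ separately.
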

	\begin{proof}
		The orbifold $X$ admits a locally closed stratification into a finite number of sets $$X = \bigcup_{H \textrm{ a group}}Y_H.$$ Here $Y_H$ is the locally closed stratum of $X$ whose stabiliser is the group $H$. The Euler--Satake characteristic is the weighted sum $$\sum_H \frac{1}{|H|}\chi_c\left(Y_H\right).$$ Here $\chi_c$ denotes topological Euler characteristic with compact support. The Lemma follows from the corresponding statement for compactly supported Euler characteristic.
	\end{proof}
	Similarly one can check $\chi(X\times Y) = \chi(X) \times \chi(Y)$.
	\begin{lemma} \label{lem:algtop2}
		The Euler--Satake characteristic of an orbifold equipped with a free $\mathbb{C}^\star$ action is the Euler--Satake characteristic of the fixed point locus.
	\end{lemma}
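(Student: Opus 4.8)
The plan is to deduce everything from the additivity and multiplicativity of the compactly supported Euler characteristic recorded in (the proof of) Lemma \ref{lem:algtop1}, together with the single vanishing $\chi(\mathbb{C}^\star)=0$. Since the statement concerns an honest variety, the Euler--Satake characteristic equals the topological Euler characteristic with compact support, which I will simply denote $\chi$; in particular $\chi$ is additive over locally closed stratifications and multiplicative on Zariski locally trivial fibrations.

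First I would reduce to the case $n=1$. Choose a generic one--parameter subgroup $\lambda:\mathbb{C}^\star \hookrightarrow (\mathbb{C}^\star)^n$. Covering $X$ by finitely many invariant affines with linearised action, the torus fixed locus is cut out by the vanishing of all nonzero weight spaces, and only finitely many weights occur; a $\lambda$ avoiding the finitely many walls on which some nonzero weight pairs to zero therefore satisfies $X^\lambda = X^{(\mathbb{C}^\star)^n}$. It thus suffices to prove $\chi(X)=\chi(X^{\mathbb{C}^\star})$ for a single $\mathbb{C}^\star$ action. Writing $X = X^{\mathbb{C}^\star}\sqcup U$ with $U$ the open complement of the fixed locus, additivity reduces the claim to showing $\chi(U)=0$. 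Every orbit in $U$ is one dimensional, so each point of $U$ has finite stabiliser, necessarily a cyclic subgroup $\mu_d\subset \mathbb{C}^\star$. Stratifying $U$ by the order $d$ of the stabiliser yields finitely many locally closed $\mathbb{C}^\star$--invariant subvarieties $U_d$, and by additivity I am reduced to proving $\chi(U_d)=0$ for each $d$. On $U_d$ the quotient group $\mathbb{C}^\star/\mu_d\cong \mathbb{C}^\star$ acts freely, so $U_d$ is a $\mathbb{C}^\star$--torsor over its quotient. As $\mathbb{C}^\star=GL_1$ is a special group, this torsor is Zariski locally trivial, whence multiplicativity gives $\chi(U_d)=\chi(\mathbb{C}^\star)\cdot\chi(U_d/\mathbb{C}^\star)=0$ because $\chi(\mathbb{C}^\star)=0$. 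Summing over $d$ gives $\chi(U)=0$, and the lemma follows.

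The main obstacle is the geometric input in the final step: one must ensure that the free $\mathbb{C}^\star$ action on each stratum $U_d$ produces a genuine $\mathbb{C}^\star$--torsor to which multiplicativity of $\chi$ applies. The existence of the quotient for a free torus action can be arranged working in the constructible (or analytic) category, and the Zariski local triviality of a $GL_1$--torsor is exactly Hilbert 90; once these are in place the Euler characteristic computation is immediate. Everything else is a routine application of the additive and multiplicative formalism already invoked for Lemma \ref{lem:algtop1}.
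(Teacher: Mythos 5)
Your proposal is correct, but it takes a genuinely different route from the paper. The paper does not actually prove the key geometric fact: after stratifying by automorphism group (the strata $Y_H$ from the proof of Lemma \ref{lem:algtop1}, so that the statement also applies in the orbifold setting where it is later used), it simply cites Bia{\l}ynicki-Birula for the equality $\chi_c(Y_H)=\chi_c(Y_H^{\mathsf{fix}})$. What you have written is, in effect, a self-contained proof of that cited result: reduce to $n=1$ via a generic one-parameter subgroup, split off the fixed locus by additivity, stratify the complement $U$ by the order $d$ of the (necessarily finite, cyclic) stabiliser, and kill each stratum $U_d$ using that it is a torsor under $\mathbb{C}^\star/\mu_d\cong\mathbb{C}^\star$, which is Zariski locally trivial by Hilbert 90, so $\chi(U_d)=\chi(\mathbb{C}^\star)\cdot\chi(U_d/\mathbb{C}^\star)=0$. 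This is the standard argument behind the reference, and you correctly flag the two points that need care: the existence of the quotient of $U_d$ (Rosenlicht's theorem gives a geometric quotient on a dense open, and Noetherian induction finishes, since $\chi_c$ is additive), and local triviality of the torsor. One further technicality worth naming in your reduction to $n=1$: covering $X$ by invariant affines with linearised action invokes Sumihiro's theorem, which requires normality; for a general variety one should either stratify, pass to invariant locally closed pieces, or note that all spaces the lemma is applied to in this paper are quasi-projective, so nothing breaks. The trade-off is clear: the paper's proof is shorter and handles the stacky strata explicitly but outsources the geometry, while yours is elementary and self-contained for varieties, which is exactly the scope of the statement as written.
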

	\begin{proof}
		Since group actions preserve automorphism groups our action restricts to an action of $(\mathbb{C}^\star)^n$ on $Y_H$ for each $H$. Denote the fixed locus by $Y_H^{\mathrm{fix}}$. The result follows noting $$\chi_c(Y_H) = \chi_c(Y_H^\mathrm{fix}).$$See \cite{BIALYNICKIBIRULA197399} for a proof.
	\end{proof}
	
	\subsubsection{Relative Hilbert schemes of points on a curve.} Let $C_n$ be the moduli space of $n$ distinct labelled points on a smooth curve $C$. In this situation $C$ is a scheme, not a more general orbifold, and the Euler--Satake characteristic is the usual Euler characteristic with compact support. We set $C^n$ the product of $n$ copies of $C$. Define $$D = \{(p_1,...,p_n)| p_i = p_j \textrm{ some } i \ne j\} \subset C^n.$$ Certainly we have $C^n = C_n \cup D$. We use throughout the shorthand $$[n] = \{1,...,n\}.$$ We use the notation $\stirling{n}{k}$ for Stirling numbers of the second kind. The following lemma is used repeatedly in subsequent analysis. 
	
	\begin{lemma}\label{lem:product}
		The sequence $\chi(C_n)$ satisfies the following recursion $$\chi(C_n) = \chi(C)^{n} - \sum_{k=1}^{n-1}\stirling{n}{k} \chi(C_{k}).$$
	\end{lemma}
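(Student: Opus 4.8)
The plan is to stratify the product $C^n$ according to the coincidence pattern of the $n$ labelled points and to apply the additivity of the (compactly supported) Euler--Satake characteristic from Lemma \ref{lem:algtop1}. First I would note that every tuple $(p_1,\dots,p_n)$ in $C^n$ determines a set partition $\pi$ of $[n]$ by declaring $i \sim j$ precisely when $p_i = p_j$. For a fixed partition $\pi$ with exactly $k$ blocks, the locus $S_\pi \subset C^n$ of tuples whose coincidence pattern is \emph{exactly} $\pi$ is locally closed: it is the intersection of the closed conditions $p_i = p_j$ for $i \sim j$ with the open condition that the resulting $k$ distinct coordinates be pairwise distinct. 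Choosing one representative index from each block identifies $S_\pi$ with the configuration space $C_k$ of $k$ distinct labelled points, since a point of $S_\pi$ is exactly an assignment of $k$ pairwise distinct points of $C$ to the $k$ blocks. Hence $\chi(S_\pi) = \chi(C_k)$ whenever $\pi$ has $k$ blocks.

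Next I would sum over all partitions. The strata form a finite locally closed decomposition $C^n = \bigsqcup_\pi S_\pi$, so iterating Lemma \ref{lem:algtop1} gives
$$\chi(C^n) = \sum_{\pi} \chi(S_\pi) = \sum_{k=1}^n \stirling{n}{k}\,\chi(C_k),$$
where I have used that the number of set partitions of $[n]$ into exactly $k$ blocks is the Stirling number $\stirling{n}{k}$. Combining this with the multiplicativity $\chi(C^n) = \chi(C)^n$, recorded immediately after Lemma \ref{lem:algtop1}, yields
$$\chi(C)^n = \sum_{k=1}^n \stirling{n}{k}\,\chi(C_k).$$

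Finally I would isolate the top term. Since $\stirling{n}{n} = 1$ and the partition of $[n]$ into singletons gives precisely the open stratum $S_\pi = C_n$ of tuples with all coordinates distinct, the $k=n$ summand equals $\chi(C_n)$. Subtracting it from both sides produces the claimed recursion
$$\chi(C_n) = \chi(C)^n - \sum_{k=1}^{n-1}\stirling{n}{k}\,\chi(C_k).$$
The only point that needs genuine care is verifying that each $S_\pi$ is locally closed and isomorphic to $C_k$ as a variety, so that the compactly supported additivity of Lemma \ref{lem:algtop1} applies stratum by stratum; the remainder is the combinatorial bookkeeping of counting partitions by their number of blocks, for which the Stirling numbers are tailor-made.
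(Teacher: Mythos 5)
Your proof is correct and follows essentially the same route as the paper: the paper stratifies $C^n$ by the number $k$ of distinct coordinates and computes $\chi(C_{n,k}) = \stirling{n}{k}\chi(C_k)$ by describing points of $C_{n,k}$ as pairs $(p,\varphi)$ with $p \in C_k$ and $\varphi:[n]\to[k]$ surjective modulo $S_k$, which is exactly your decomposition of $C_{n,k}$ into strata $S_\pi$ indexed by set partitions with $k$ blocks. Your finer stratification by coincidence pattern is just an unpacking of the same combinatorics, and the rest (additivity from Lemma \ref{lem:algtop1}, multiplicativity $\chi(C^n)=\chi(C)^n$, isolating the $k=n$ term) matches the paper's argument.
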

	\begin{proof}
		The case $n=1$ is a tautology. We define a locally closed stratification $$\bigcup_{k=1}^{n} C_{n,k} = C^n$$ where $C_{n,k} = \{(p_1,...,p_n)|\mathrm{Card}(\{p_i\})=k\}.$
		Clearly $C_n=C_{n,n}$. Observe a point of $C_{n,k}$ is specified by a pair $(p,\varphi)$ where $p$ is a point of $C_k$ and $\varphi$ a surjective function $\varphi:[n]\rightarrow [k]$. Two such pairs specify the same point if and only if they differ by post--composing with an element of $S_{k}$. It follows that $$\chi(C_{n,k}) = \stirling{n}{k} \chi(C_k).$$ Sum over $k$, notice $\chi(\prod_{i=1}^nC)=\chi(C)^n$ and appeal to Lemma \ref{lem:algtop1} to complete the proof.
	\end{proof}
	
	\noindent A special case of this lemma is of particular use: let $\mathbb{C}^\star$ act on $(\mathbb{C}^\star)^n\backslash D$ diagonally and denote the quotient $C_n^\dagger$. Notice $C_n^\dagger$ is the moduli space of $n$ distinct labelled points on $(\mathbb{C}^\star)^n$ quotiented by the action of $\mathbb{C}^\star$ induced by the diagonal action.
	
	\begin{lemma}
		We have the following recursive formula for Euler characteristics $$c_n=\chi(C_n^\dagger) = (-1)^{n-1} - \sum_{k=2}^{n-1}\stirling{n-1}{k-1} \chi(C_{k}^\dagger).$$
	\end{lemma}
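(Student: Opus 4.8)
The plan is to recognise $C_n^\dagger$ as an honest configuration space and then to apply Lemma~\ref{lem:product} to a suitably chosen curve. Concretely, I would show that $C_n^\dagger$ is isomorphic to the space of $n-1$ distinct labelled points on the twice-punctured affine line $C' := \mathbb{C}\setminus\{0,1\}$, and then read off the recursion from the specialisation of Lemma~\ref{lem:product} to $C'$, using $\chi(C')=-1$.

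First I would record that the diagonal $\mathbb{C}^\star$-action on $(\mathbb{C}^\star)^n\setminus D$ is free: if $(tp_1,\dots,tp_n)=(p_1,\dots,p_n)$ with all $p_i\neq 0$ then $t=1$. Consequently the quotient map admits a global slice obtained by normalising the last coordinate, since each orbit contains the unique representative $(p_1/p_n,\dots,p_{n-1}/p_n,1)$. This slice is the locus of tuples $(q_1,\dots,q_{n-1})$ with each $q_i\in\mathbb{C}^\star$, the $q_i$ pairwise distinct, and $q_i\neq 1$ for all $i$; that is precisely the configuration space of $n-1$ distinct labelled points on $\mathbb{C}^\star\setminus\{1\}=C'$. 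Writing $C'_m$ for the configuration space of $m$ distinct labelled points on $C'$, this yields $C_n^\dagger\cong C'_{n-1}$, so $c_k=\chi(C'_{k-1})$ for every $k\geq 1$ (with $C'_0$ a point, giving $c_1=1$).

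Next I would apply Lemma~\ref{lem:product} verbatim to the smooth curve $C'$, noting $\chi(C')=\chi(\mathbb{C})-2=-1$. With $n$ replaced by $n-1$ the lemma gives
$$\chi(C'_{n-1})=\chi(C')^{n-1}-\sum_{j=1}^{n-2}\stirling{n-1}{j}\chi(C'_{j})=(-1)^{n-1}-\sum_{j=1}^{n-2}\stirling{n-1}{j}\chi(C'_{j}).$$
Re-indexing by $k=j+1$ and substituting $\chi(C'_{j})=\chi(C'_{k-1})=c_k$ turns the sum into $\sum_{k=2}^{n-1}\stirling{n-1}{k-1}c_k$, which is exactly the asserted recursion; the cases $n=1,2$ agree with the empty-sum convention.

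I expect the only genuine subtlety to lie in the first step: checking that normalising the last coordinate produces a global slice, so that the free $\mathbb{C}^\star$-quotient is geometric and genuinely isomorphic (not merely bijective on points) to the configuration space of $C'$. A secondary point worth a remark is that Lemma~\ref{lem:product} applies to the noncompact curve $C'$ at all; this is fine because its proof via Lemma~\ref{lem:algtop1} uses only compactly supported Euler characteristics, and $\chi_c=\chi$ for complex varieties. Everything after the identification is a routine reindexing of Stirling numbers.
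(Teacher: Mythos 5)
Your proposal is correct and follows essentially the same route as the paper: the paper's proof likewise identifies $C_n^\dagger$ with the configuration space of $n-1$ distinct labelled points on $\mathbb{C}^\star\setminus\{1\}$ (by normalising one coordinate) and then applies Lemma~\ref{lem:product} with $\chi(C)=-1$. Your write-up merely makes explicit the global slice for the free $\mathbb{C}^\star$-action and the Stirling reindexing, both of which the paper leaves implicit.
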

	
	\begin{proof}
		Observe $C_n^\dagger$ is isomorphic to  $$(\mathbb{C}^\star\backslash \{1\})^{n-1}\backslash \{(\lambda_2,...,\lambda_n) |\lambda_i = \lambda_j \textrm{ for some }i \ne j\}.$$
		and we apply Lemma \ref{lem:product} with $\chi(C)=-1$ to deduce the statement.
	\end{proof}

	\subsubsection{Rubber Hilbert schemes on thickenings of $\mathbb{C}^\star$}\label{sec:thickenings} Fix an integer $n$.
	The torus $\mathbb{C}^\star$ acts on $$\mathbb{C}^\star\times \mathrm{Spec}(\mathbb{C}[\varepsilon]/(\varepsilon^n)) = \mathrm{Spec}(\mathbb{C}[X,X^{-1},\varepsilon]/(\varepsilon^n))$$ through its action on the first component. This induces an action of the same group on the Hilbert scheme of k points $$\mathrm{Hilb}^{k}(\mathbb{C}^\star \times \mathrm{Spec}(\mathbb{C}[\varepsilon]/(\varepsilon^n))).$$ 
	We denote the quotient $$H_{k,n}=\mathrm{Hilb}^{k}(\mathbb{C}^\star \times \mathrm{Spec}(\mathbb{C}[\varepsilon]/(\varepsilon^n)))/\mathbb{C}^\star.$$ Note $H_{k,n}$ is in general a Deligne--Mumford stack not a scheme. The Euler--Satake characteristic of $H_{k,n}$ will be expressed in terms of $$P_q(\ell) = \left|\left\{\mathrm{Partitions } \sum_j \ell_j = \ell \Bigg| \ell_j<q \textrm{ for all }j\right\}\right|.$$
	For a tuple $\underline{\ell} = (\ell_1,...,\ell_{|\ell|})$ of natural numbers we define $N(\underline{\ell})$ to be the product over $k$ in $\mathbb{N}$ of $n(\underline{\ell},k)$ where $n(\underline{\ell},k) = d!$ where $d$ is the number of times $k$ appears in the tuple $\underline{\ell}$. Let $\mathrm{Part}(k)$ be the set of unordered partitions. 
	\begin{lemma} We have the following equality,
		$$h_{k,n}=\chi(H_{k,n}) = \sum_{\underline{\ell}\in \mathrm{Part}(k)}\frac{\chi(C_{|\underline{\ell}|}^\dagger)}{N(\underline{\ell})} \prod_{\ell_j \in \underline{\ell}} P_n(\ell_j).$$
		The product is over $\ell_j$ in the partition $\underline{\ell} = (\ell_1,...,\ell_{|\ell|})$ where $\sum_j \ell_j = k$.
	\end{lemma}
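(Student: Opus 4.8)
The plan is to compute $\chi(H_{k,n})$ by stratifying the Hilbert scheme $\mathsf{Hilb}^k(\C^\star \times \Spec(\C[\varepsilon]/(\varepsilon^n)))$ according to the \emph{support} of the length-$k$ subscheme on the reduced curve $\C^\star$, and then using Lemma \ref{lem:algtop2} to reduce to fixed loci under the residual action. A zero-dimensional subscheme $Z$ of length $k$ in the thickened curve is set-theoretically supported on finitely many points of $\C^\star$; at each support point the subscheme is cut out locally in $\Spec(\C[X,X^{-1},\varepsilon]/(\varepsilon^n))$. Since the thickening direction $\varepsilon$ is nilpotent of order $n$, the local contribution of a single support point is measured by a partition whose parts are each strictly less than $n$ in the $\varepsilon$-direction; this is exactly the quantity counted by $P_n(\ell)$ when the total length at that point is $\ell$. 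First I would make precise that the data of $Z$ decomposes as a choice of a distinct set of support points together with, at each point, a local length and a local subscheme structure.

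\textbf{Reducing to fixed loci and the $C^\dagger$ factor.}
Next I would invoke Lemma \ref{lem:algtop2}: since we quotient by the $\C^\star$ action on the first factor, $\chi(H_{k,n})$ equals the Euler--Satake characteristic of the fixed locus of the induced $\C^\star$ action. The fixed-point analysis forces the $\varepsilon$-direction local structure to be $\C^\star$-invariant (the action only moves points along $\C^\star$), so the contribution at each support point is a \emph{constant} count $P_n(\ell_j)$ independent of the point's position, while the positions of the distinct support points contribute the configuration-space factor. Fixing a partition $\underline{\ell} = (\ell_1,\dots)$ recording the local lengths, the distinct support points form a configuration of $|\underline{\ell}|$ labelled points on $\C^\star$ modulo the diagonal $\C^\star$, whose Euler characteristic is exactly $\chi(C_{|\underline{\ell}|}^\dagger)$. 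The product $\prod_{\ell_j \in \underline{\ell}} P_n(\ell_j)$ accounts for the local subscheme choices.

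\textbf{The symmetry correction.}
The one genuinely delicate point is the combinatorial normalization by $N(\underline{\ell})$. Because the parts $\ell_j$ of the partition may repeat, points carrying equal local lengths are interchangeable, so labelling them as in $C_{|\underline\ell|}^\dagger$ overcounts by the order of the stabilizer of the multiset $\underline{\ell}$ under permutation. If $k$ appears $d$ times in $\underline{\ell}$, permuting those $d$ support points with one another gives the same unordered subscheme, so we divide by $d!$; taking the product over all distinct local-length values $k$ gives the factor $N(\underline{\ell}) = \prod_k n(\underline{\ell},k)$ with $n(\underline\ell,k)=d!$. I expect this bookkeeping — correctly matching the labelled configuration space $C_{|\underline\ell|}^\dagger$ against the unordered subscheme data and verifying the quotient is precisely $N(\underline\ell)$ at the level of Euler--Satake characteristics (including any residual automorphisms contributing fractional weights) — to be the main obstacle. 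Summing over all unordered partitions $\underline{\ell} \in \mathsf{Part}(k)$ via the additivity of Lemma \ref{lem:algtop1} then yields
$$
h_{k,n} = \sum_{\underline{\ell}\in \mathsf{Part}(k)}\frac{\chi(C_{|\underline{\ell}|}^\dagger)}{N(\underline{\ell})} \prod_{\ell_j \in \underline{\ell}} P_n(\ell_j),
$$
as claimed.
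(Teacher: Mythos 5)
Your global architecture is exactly the paper's: stratify $H_{k,n}$ by the partition $\underline{\ell}$ of local lengths at the support points, identify each stratum with a configuration space of $|\underline{\ell}|$ distinct points modulo the diagonal $\mathbb{C}^\star$ (contributing $\chi(C^\dagger_{|\underline{\ell}|})$) times a product of local punctual factors, all modulo a product of symmetric groups of order $N(\underline{\ell})$, and then sum using additivity. The symmetry bookkeeping you flagged as the main obstacle is in fact the easy part and you handle it as the paper does: the symmetric groups permuting distinct support points of equal local length act freely, so the defining property $\chi([X/G])=\chi(X)/|G|$ of the Euler--Satake characteristic gives the division by $N(\underline{\ell})$ with no fractional corrections.

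The genuine gap is your justification of the local factor $P_n(\ell_j)$. The punctual Hilbert scheme of length $\ell_j$ at a point of the thickened curve is a positive-dimensional variety (already for $\ell_j=2$ and $n\geq 2$ it is a $\mathbb{P}^1$ of tangent directions), so its points are not literally ``measured by a partition''; only its Euler characteristic equals the partition count, and that equality needs an argument. The mechanism you propose cannot supply it: the $\mathbb{C}^\star$ you invoke in Lemma \ref{lem:algtop2} is the translation torus being quotiented, which acts on $\mathsf{Hilb}^{k}(\mathbb{C}^\star\times\mathsf{Spec}(\mathbb{C}[\varepsilon]/(\varepsilon^n)))$ by moving supports and hence has \emph{empty} fixed locus there (consistently, the unquotiented Hilbert scheme has $\chi=0$), while on the quotient $H_{k,n}$ the induced action is trivial, so ``the fixed locus of the induced action'' is vacuous. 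The paper instead introduces a second, \emph{local} torus: since a length-$\ell_j$ subscheme supported at one point has ideal containing $X^{\ell_j}$, the local factor is $\mathsf{Hilb}^{\ell_j}\bigl(\mathsf{Spec}(\mathbb{C}[X,\varepsilon]/(X^{\ell_j},\varepsilon^n))\bigr)$, which carries a $(\mathbb{C}^\star)^2$ scaling action $(t_1,t_2)\cdot f(X,\varepsilon)=f(t_1^{-1}X,t_2^{-1}\varepsilon)$ whose fixed points are precisely the monomial ideals; counting these Young diagrams constrained by $\varepsilon^n=0$ gives $P_n(\ell_j)$, and only then does Lemma \ref{lem:algtop2} apply. (Position-independence of the local factor follows from homogeneity of $\mathbb{C}^\star$ under translation and needs no fixed-point argument at all.) With the local computation repaired in this way, the remainder of your proof goes through and agrees with the paper's.
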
 
	
	\begin{proof}
		A point of $H_{k,n}$ specifies a partition of $k$. Indeed such a point specifies a closed subscheme $Z$ of $\mathbb{C}^\star\times \mathrm{Spec}(\mathbb{C}[\varepsilon]/(\varepsilon^n))$ of dimension zero. This is the data of points $\{1,...,|\ell|\}$ and at each point the stalk of the structure sheaf $\mathcal{O}_Z$ has length $\ell_i$ where $\sum_i \ell_i = n$. The quotient $H_{k,n}$ admits a locally closed stratification $$H_{k,n} = \bigcup_{\underline{\ell}}H_{k,n}^{\underline{\ell}}$$ where $H_{k,n}^{\underline{\ell}}\subset H_{k,n}$ are moduli spaces of subschemes which specify partition $\underline{\ell} = (\ell_1,...,\ell_{|\ell|})$. Notice $H_{k,n}^{\underline{\ell}}$ is isomorphic as a stack to the quotient of the scheme $$C_{|\ell|}^\dagger\times \prod_{\ell_i\in \underline{\ell}} \mathrm{Hilb}^{\ell_i}(\mathrm{Spec}(\mathbb{C}[X]/(X^{\ell_i})) \times \mathrm{Spec}(\mathbb{C}[\varepsilon]/(\varepsilon^n)))$$ by the free action of a product of symmetric groups. It thus suffices to compute the Euler characteristic of this space. This product of symmetric groups has order $N(\ell)$.
		There is a $(\mathbb{C}^\star)^2$ action on $$\mathrm{Spec}(\mathbb{C}[X,\varepsilon]/(X^{\ell_i},\varepsilon^n))$$ specified on the level of coordinate rings by $$(t_1,t_2)(f(X,\epsilon)) = f(t_1^{-1}X,t_2^{-1}\epsilon).$$ The Hilbert scheme of points $$\mathrm{Hilb}^{\ell_i}(\mathrm{Spec}(\mathbb{C}[X]/(X^{\ell_i})) \times \mathrm{Spec}(\mathbb{C}[\varepsilon]/(\varepsilon^n)))$$ inherits an action whose fixed points are monomial ideals. Counting monomial ideas we deduce that $$\chi\left(\mathrm{Hilb}^{\ell_i}(\mathrm{Spec}(\mathbb{C}[X]/(X^{\ell_i})) \times \mathrm{Spec}(\mathbb{C}[\varepsilon]/(\varepsilon^n)))\right)=P_n(\ell_i)$$ and the result follows.
	\end{proof}
	
	\subsubsection{Curve moduli}\label{sec:curvemoduli} Let $X_\sigma$ be the toric variety whose fan is the star fan of $v_0$ depicted in Figure~\ref{fig:buildingblock} and let $\beta$ be the curve class in $A_\star(X_\sigma)$ whose intersection number with horizontal boundary divisors is as indicated by $i$ and $j$ and which intersects the non--horizontal boundary divisors in a single reduced point. 
	
	Let $T'^{i,j}$ be the moduli space of strongly transverse curves in $X_\sigma$ with Chow class $\beta$. A strongly transverse curve is specified by its restriction to the dense torus. This restriction is cut out by a single polynomial in $\mathbb{C}[X^{\pm 1},Y^{\pm 1}]$. Many polynomials give rise to the same subscheme, but there is a unique such polynomial $f'(X,Y)$ in $\mathbb{C}[X,Y]$ such that neither $X$ nor $Y$ is a factor. We study polynomials of this form. Knowing how the curve intersects the toric boundary means we can uniquely write $$f'(X,Y)=f(Y)(g(Y)+ Xh(Y)) $$ where $g$ and $h$ are coprime elements of $\mathbb{C}[X,Y]$, non-zero and have no monomial factors. Certainly we have the equalities $$\mathrm{deg}(f)+\mathrm{deg}(g) = i\textrm{ and } \mathrm{deg}(f)+\mathrm{deg}(h) = j.$$
	
	The quotient of $T'^{i,j}$ by the action of the dense torus of $X_\sigma$ is denoted $T^{i,j}$. Stratify $$T^{i,j} = \bigcup_{\alpha\in A(i,j)} T^{i,j}_\alpha.$$ The discrete data $\alpha$ comes in two parts. First a set $\alpha^+$, which is the set of roots of $fg$, equipped two integer valued functions $$m_g,m_f^+: \alpha^+ \rightarrow \mathbb{R}$$ which records the multiplicity of each element of $\alpha^+$ as a root of $f$ and $g$. Second a set $\alpha^-$ which we think of as the set of roots of $fh$, equipped with multiplicity functions $m_h,m_f^-$. 
	
	Denote the number $v$ of roots of $g$ which are not roots of $f$, the number $w$ of roots of $h$ which are not roots of $f$ and the number $s$ of distinct roots of $f$. These are functions of $\alpha$. Write $t_\alpha =s+v+w$. Note $t_\alpha$ is the number of $Y$ coordinates at which the zero set of $f'$ hits two toric boundaries. The multiplicity functions specify partitions of the set of roots. This is a partition of $t_\alpha = \sum_i m_i$. The partition depends only on $\alpha$ and we write $R(\alpha) = \prod_i (m_i!)$. 
	
	Since a polynomial is characterised by its roots we identify $T^{i,j}_{\alpha} ={C}_{t_\alpha}^\dagger/G$ where $G$ permutes roots with the same multiplicity and thus has order $R(\alpha)$. We deduce $$\chi(T^{i,j}_\alpha) = \chi({C}_{t_\alpha})/R(\alpha).$$ We have already computed the numerator.
	
	\subsubsection{Moduli of points on a curve.} We denote the moduli space of $k$ points on a fixed strongly transverse and stable curve in $X_\sigma$, say $$C=V(f(Y)(g(Y) + X h(Y)))\subset (\mathbb{C}^\star)^2,$$ by $M_{\alpha,k}$. Here $\alpha$ records the same discrete data about $C$ as in the last section.
	The curve $C$ has an irreducible component $C_\alpha = V(g(Y) + Xh(Y))$ isomorphic to $\mathbb{P}^1$ less $b$ points where $$b =2+ \textrm{number of roots of g} + \textrm{number of roots of h}.$$ In this equation roots are counted without multiplicity. The other irreducible components are all isomorphic to $\mathbb{C}^\star$. The Hilbert scheme of length $k$ subschemes of $C_\alpha$ is denoted $C_\alpha^{[k]}$. The space of $|\ell|$ distinct points on $C_\alpha$ is denoted $(C_\alpha)_{|\ell|}$.
	
	Performing the analysis described in Section \ref{sec:thickenings} we find 
	$$h_{k,l}=\chi(C_\alpha^{[k]})= \sum_{\underline{\ell} \in \mathrm{Part}(k)} \frac{\chi\left((C_\alpha)_{|\underline{\ell}|}\right)}{N(\underline{\ell})}.$$ Since $\mathbb{C}^\star$ acts freely on every irreducible component of $C$ other than $C_\alpha$, it follows from \cite[Equation (3.3)]{KoolShendeThomas} that $$\chi(M_{\alpha,k}) = \chi(C_\alpha^{[k]}).$$ 
	
	\subsection{Euler--Satake characteristic of multiplicands} Our approach is to compute the Euler--Satake characteristic of a logarithmic stratification of $\PTnXbeta$. To compute this we express each logarithmic stratum as a product of moduli spaces. In this subsection we compute the Euler--Satake characteristic of each multiplicand.
	\subsubsection{Defining a multiplicand}\label{sec:multiplicand}
	Figure \ref{fig:buildingblock} depicts part of a tropical curve $\Gamma_\varphi$ (red and black) superimposed on the fan of $\mathbb{P}^1 \times \mathbb{P}^1$ to form the pre--expansion tropical curve $\Gamma$. We denote the part of $\Gamma$ appearing in Figure \ref{fig:buildingblock} by $\Gamma^\star$. Consider decorating this diagram with $k$ marked points $p_1,...,p_k$ all of which lie on the red line. Assume these marked points are equipped with a weight $w(p_i)$ such that $\sum_i w(p_i)=n$. Add a vertex to $\Gamma^\star$ at each point $p_s$ to form a tropical curve $\Gamma'$.
	
	Observe $\Gamma'$ defines an expansion $X_{\Gamma'}$ of $X$ via Construction \ref{cons:expansions} and there is a natural (but not surjective) morphism $X_{\Gamma'}\rightarrow X_{\sigma}$. Let $W$ be the space of strongly transverse and stable curves in $X_\sigma$ which intersect the black lines in one reduced point and the red lines in subschemes of length $i$ and $j$ respectively. Pulling curves back along the map $X_{\Gamma'}\rightarrow X_{\sigma}$, we think of $W$ as a moduli space of strongly transverse and stable curves in $X_{\Gamma'}$ (up to isomorphism). In this capacity there is a universal diagram
	$$
	\begin{tikzcd}
	{C} \arrow[r, hook]\arrow[rd] & {X}_{\Gamma'}\times W \arrow[r] \arrow[d] & X \\
	& W                               &  
	\end{tikzcd}.
	$$
	
	We define a moduli space $E_{(\Gamma^\star,\{p_i\},w)}'$ to be the locally closed subscheme of the relative Hilbert scheme of $n$ points $\mathrm{Hilb}^{[n]}(C/W)$ such that length $w(p_i)$ is supported in the interior of the logarithmic stratum of $X_{\Gamma'}$ associated to the vertex at point $p_i$. Finally set $E_{(\Gamma^*,\{p_i\},w)}$ to be the quotient of $E_{(\Gamma^*,\{p_i\},w)}'$ by the group of automorphisms of $X_{\Gamma'}$ over $X_\Gamma$. These automorphisms are exactly the horizontal one dimensional torus action on each new component.
	
	\subsubsection{The Euler--Satake Characteristic of $E_{(\Gamma^\star,\{p_i\},w)}$.} 
	Stratify $E_{(\Gamma^\star,\{p_i\},w)}$ according to the topological type of the curve $C'$. We write $$ E_{(\Gamma^\star,\{p_i\},w)} = \bigcup E_{(\Gamma^\star,\{p_i\},w)}^\alpha.$$ This coincides with the pull back of the stratification $T_{i,j}^\alpha$ from Section \ref{sec:curvemoduli}. 
	
	Such strata are locally closed so it suffices to compute the Euler--Satake characteristic of each stratum $E_{(\Gamma^\star,\{p_i\},w)}^\alpha$. If one of the $p_i$ coincides with $v_0$ then denote this marked point $p_0$. If there is no such marked point set $w(p_0)=0$. Denote the remaining marked points by $\{p_1,...,p_k\}$. 
	
	Now set $\mathrm{Lab}(\{p_i\},\alpha)$ to be the set of functions $f:\{p_i\}\rightarrow \alpha^+ \cup \alpha^-$ where $f(p_i)$ lies in $ \alpha^+$ if and only if $p_i$ appears to the right of $v_0$.
	
	\begin{lemma}\label{lem:buildingblock} With the above notation and assuming $\{p_i\}\backslash \{p_0\}\ne \emptyset$ and $i,j$ not both zero we have $$\chi(E_{(\Gamma^\star,\{p_i\},w)}^\alpha) = \chi\left(T_{i,j}^\alpha\times \left(\bigcup_{f\in \mathrm{Lab}(\{p_i\},\alpha)} \prod_{q=1}^k H_{w(p_q),m(f(p_q))} \right)\times M_{\alpha,w(p_0)}\right).$$ 
		Otherwise we have 
		$$\chi(E_{(\Gamma^\star,\{p_i\},w)}^\alpha) = \chi\left(T_{i,j}^\alpha\times M_{\alpha,w(p_0)}\right).$$
		Since the Euler--Satake characteristic is additive over a locally closed stratification and multiplicative over cartesian products the analysis of the previous sections allows us to read off these numbers.
	\end{lemma}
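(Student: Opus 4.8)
The plan is to refine the stratification of $E^\alpha_{(\Gamma^\star,\{p_i\},w)}$ one further step, recording for each side marked point the boundary node of the expansion over which its length is supported, and then to recognise each resulting stratum as a product of the three moduli spaces whose Euler--Satake characteristics were computed in the previous subsections. By Lemma \ref{lem:algtop1} the Euler--Satake characteristic is additive over a locally closed stratification, and it is multiplicative for products, so it suffices to produce the product structure on each refined stratum and then to sum.

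First I would set up the refinement. Fix the combinatorial type $\alpha$, organising the roots of $fg$ and $fh$ into the sets $\alpha^+$ and $\alpha^-$ of Section \ref{sec:curvemoduli}. A point of $E^\alpha_{(\Gamma^\star,\{p_i\},w)}$ is a strongly transverse and stable curve $C$ of type $\alpha$ in the expansion $X_{\Gamma'}$ together with a length $n$ subscheme whose length $w(p_q)$ part is supported in the interior of the logarithmic stratum at the inserted vertex $p_q$. Each side marked point $p_q$ (that is $p_q\ne p_0$) lies over a unique node of $C$, and recording this assignment yields an element $f$ of $\mathsf{Lab}(\{p_i\},\alpha)$, with $f(p_q)$ in $\alpha^+$ or $\alpha^-$ according to whether $p_q$ lies to the right or left of $v_0$. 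This cuts $E^\alpha_{(\Gamma^\star,\{p_i\},w)}$ into locally closed pieces indexed by $\mathsf{Lab}(\{p_i\},\alpha)$.

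Next I would identify each labelled piece with the product $T_{i,j}^\alpha\times\prod_{q=1}^k H_{w(p_q),m(f(p_q))}\times M_{\alpha,w(p_0)}$. The ambient curve, forgetting the subscheme, sweeps out the factor $T_{i,j}^\alpha$ of Section \ref{sec:curvemoduli}; the length $w(p_0)$ subscheme on the component over $v_0$ gives the factor $M_{\alpha,w(p_0)}$; and for each side marked point $p_q$ the inserted two--valent vertex produces a ruled component of $X_{\Gamma'}$ whose boundary divisor over $X_\sigma$ is the node labelled $f(p_q)$. Since that node is a root of multiplicity $m(f(p_q))$, a neighbourhood of $C$ there is a thickening of $\mathbb{C}^\star$ of order $m(f(p_q))$, so the length $w(p_q)$ subscheme supported there, taken modulo the rubber $\mathbb{C}^\star$ rescaling the ruling, is exactly a point of $H_{w(p_q),m(f(p_q))}$ as in Section \ref{sec:thickenings}. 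The quotient by automorphisms of $X_{\Gamma'}$ over $X_\Gamma$ acts as the rubber $\mathbb{C}^\star$ on each ruled factor, so it distributes across the product. As the curve varies over $T_{i,j}^\alpha$ these local and central point configurations form a Zariski--locally trivial family, so additivity over the labelling strata together with multiplicativity of the Euler--Satake characteristic yields the first displayed formula, the union over $f$ accounting for the sum over labellings.

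When $\{p_i\}\setminus\{p_0\}=\emptyset$ or $i,j$ are both zero there are no side points on ruled components, so only the $T_{i,j}^\alpha$ and $M_{\alpha,w(p_0)}$ factors survive and the second formula follows. The main obstacle is the product identification on each stratum: one must verify that the subscheme data at each inserted node genuinely sees only a thickening of $\mathbb{C}^\star$ of the stated order $m(f(p_q))$, that the automorphism quotient splits precisely as the rubber $\mathbb{C}^\star$ on each such factor, and that the curve modulus, the central points, and the node--local points vary independently, so that the Euler--Satake characteristic really is multiplicative across the three families.
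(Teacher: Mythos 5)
Your decomposition is incomplete at its very first step, and the missing piece is exactly the content of the paper's (short) proof. You assert that ``each side marked point $p_q$ lies over a unique node of $C$, and recording this assignment yields an element $f$ of $\mathsf{Lab}(\{p_i\},\alpha)$.'' This is not automatic. The curve $C$ restricted to the inserted component at $p_q$ is a \emph{disjoint union} of thickened copies of $\mathbb{C}^\star$, one for each distinct root on the relevant side of $v_0$, and the definition of $E'_{(\Gamma^\star,\{p_i\},w)}$ only requires that length $w(p_q)$ of the subscheme be supported in the interior of that component --- it may perfectly well be distributed among several of these thickened $\mathbb{C}^\star$'s, i.e.\ over several roots with partial lengths. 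Consequently the pieces indexed by $\mathsf{Lab}(\{p_i\},\alpha)$, in which the full weight $w(p_q)$ sits over the single root $f(p_q)$, do \emph{not} cover $E^\alpha_{(\Gamma^\star,\{p_i\},w)}$: the connected components are indexed by all allocations of the $w(p_q)$ points among the roots, and your stratification omits every mixed allocation. One can already see this tension in the statement of the lemma itself: if single-root support were automatic, the union over $\mathsf{Lab}$ would be a tautological parametrisation rather than something requiring proof.

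The paper closes this gap with a vanishing argument you do not supply: on each connected component where, for some $p_i$, the $w(p_i)$ points are split among two or more roots, there is a free $\mathbb{C}^\star$ action (a residual rubber action on the inserted component), so these components contribute zero to the Euler--Satake characteristic and may be discarded from the union. Only after this step do the surviving components biject with $\mathsf{Lab}(\{p_i\},\alpha)$, and then your product identification $T^\alpha_{i,j}\times \prod_{q} H_{w(p_q),m(f(p_q))}\times M_{\alpha,w(p_0)}$ on each survivor --- which is sound, and consistent with the definition of $H_{k,n}$ as a rubber quotient in Section \ref{sec:thickenings} --- together with additivity and multiplicativity of $\chi$ gives the first formula. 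Without the discarding step your computation of $\chi(E^\alpha_{(\Gamma^\star,\{p_i\},w)})$ leaves the mixed-allocation components unaccounted for, so the claimed equality is unproven. Your treatment of the ``otherwise'' case is essentially fine, but it inherits the same omission whenever mixed configurations are a priori possible.
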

	\begin{proof}
		Connected components of $E_{(\Gamma^\star,\{p_i\},w)}^\alpha$ are choices of how to allocate the $w(p_i)$ points in the component corresponding to vertex $v_i$ among the roots $\alpha^+$ (if $p_i$ to the right of $v_0$) or $\alpha^-$ (if $p_i$ to the left of $v_0$). There is a free action of $\mathbb{C}^\star$ on those components where not all $w(p_i)$ points are assigned to the same root, so we discard them from our union.
	\end{proof}
	Euler--Satake characteristic is additive and multiplicative; applying our earlier analysis computes $\chi(E_{(\Gamma^\star,\{p_i\},w)}^\alpha).$
	\begin{figure}
		\includegraphics[width=100mm]{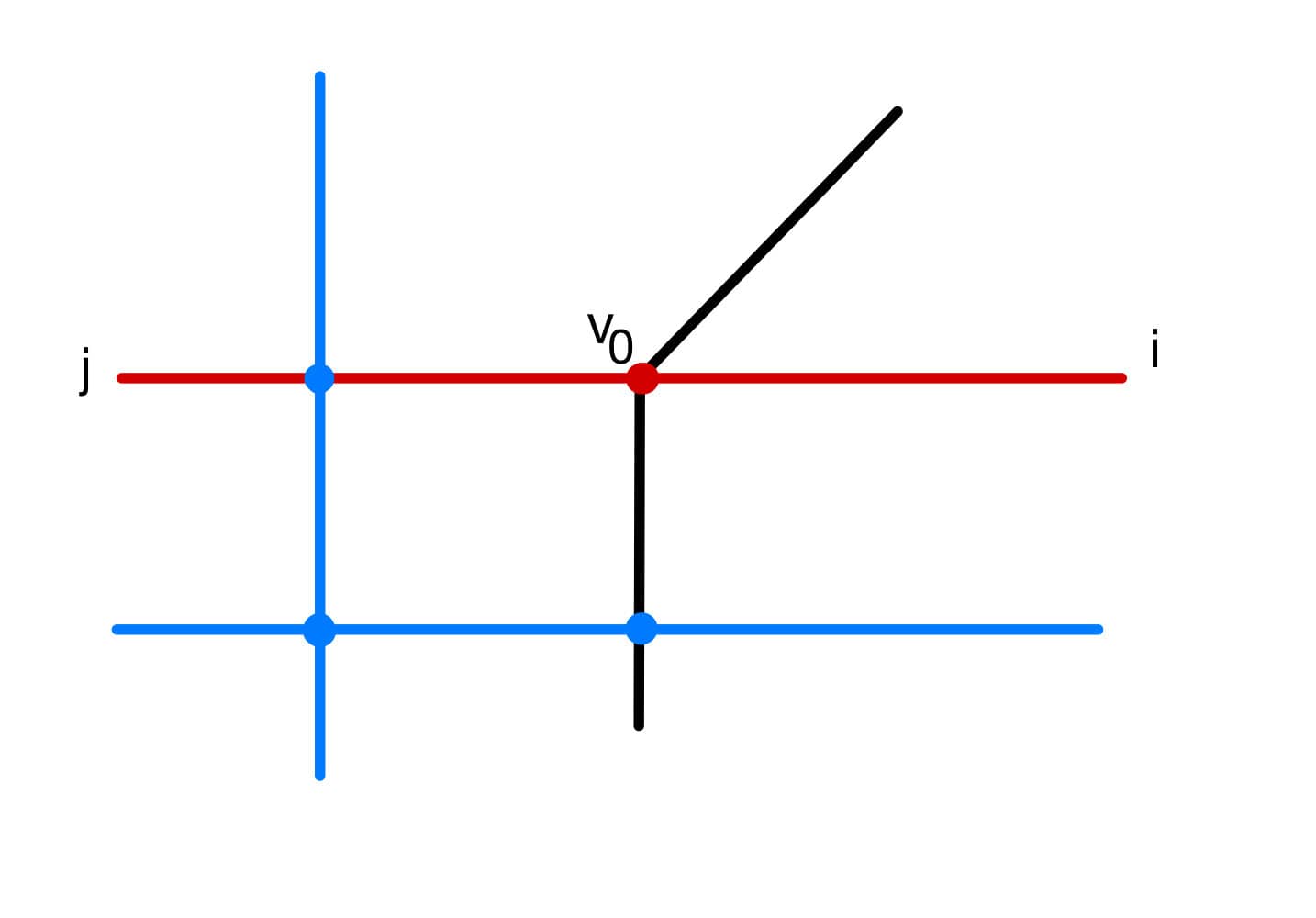}
		\caption{Tropical curve segment $\Gamma^\star$ used in our analysis. Blue is the fan of $\mathbb{P}^1 \times \mathbb{P}^1$. Black and red is the tropical curve segment superimposed on this fan.}
		\label{fig:buildingblock}
	\end{figure}
	
	\subsection{Euler--Satake Characteristic of $\mathcal{PT}_n(\mathbb{P}^1 \times \mathbb{P}^1,(1,d))$} There is a forgetful morphism (of Deligne--Mumford stacks) $$\pi_n:\mathcal{PT}_n(\mathbb{P}^1 \times \mathbb{P}^1,(1,d)) \rightarrow \mathcal{PT}_0(\mathbb{P}^1 \times \mathbb{P}^1,(1,d)).$$
	The Euler--Satake characteristic of $\mathcal{PT}_n(\mathbb{P}^1 \times \mathbb{P}^1,(1,d))$ is a weighted count of those faces in the fan $\VTM$ corresponding to strata which are not acted upon freely by any subtorus of $(\mathbb{C}^\star)^2\subset \mathbb{P}^1 \times \mathbb{P}^1$. The weight is the Euler--Satake characteristic of the fibre of $\pi_n$. We compute the weight associated to a fixed cone $\sigma$ in $\VTM$. This weight is the Euler--Satake characteristic of the locally closed subscheme $\pi_n^{-1}(O_\sigma)$ where $O_\sigma$ is the locally closed subscheme of $\mathcal{PT}_0(X,\beta)$ corresponding to $\sigma$.
	
	The cone $\sigma$ specifies the data of the combinatorial type of a pre-expansion tropical curve $\Gamma$. The logarithmic strata in the preimage in $\mathcal{PT}_n(\mathbb{P}^1 \times \mathbb{P}^1,(1,d))$ are indexed by combinatorial types of choices of $k$ (unlabelled, possibly coinciding) points on the tropical curve $\Gamma$. Continuing notation from Section~\ref{sec:HighereulerChar} we write this data as a triple $${\Gamma_k} = (\Gamma,\{p_1,...,p_k\},w).$$ Such a logarithmic stratum admits the free action of a torus and has weight zero unless: \begin{enumerate}[(1)]
		\item The points $p_i$ are disjoint from the extended main component.
		\item The tropical curve $\Gamma$ defines a stratum of $\mathcal{PT}_0(\mathbb{P}^1 \times \mathbb{P}^1,(1,d))$ on which no subtorus of $$(\mathbb{C}^\star)^2\subset \mathbb{P}^1 \times \mathbb{P}^1$$ acts freely.
	\end{enumerate} We think of $\Gamma$ as obtained by superimposing the fan of $X$ onto a tropical curve $\Gamma_\varphi$. The second hypothesis can only hold if no vertex of $\Gamma_\varphi$ lies on the extended main component. It remains to compute the Euler--Satake characteristic of such strata. 
	
	\begin{construction}
		Fix the triple $${\Gamma_k}=(\Gamma,\{p_1,...,p_k\},w)$$ and let $y$ a coordinate such that $\Gamma$ has a vertex with vertical coordinate $y$. We construct a triple $(\Gamma^\star_y,\{p_i^\star\},w_y)$ of the form studied in Section \ref{sec:multiplicand}.
		
		Observe $\Gamma$ has two vertices at height $y$. One vertex at $(0,y)$ is in the extended main component; we label the other vertex $v_0$. As part of the data of $\Gamma$, the horizontal edges adjacent to $v_0$ carry weights: say $i$ in the direction of the other vertex and $j$ away from it. The curve segment $\Gamma^\star_y$ is then defined to be the curve depicted in Figure \ref{fig:buildingblock} with this value of $i$ and $j$. The points $\{p_q^\star\}$ are points on $\Gamma^\star_y$. These points biject with points $\{p_q\}$ with $y$ coordinate $y$ and are placed in the obvious order. Finally $w_y(p_q^\star) = w(p_q)$.
	\end{construction}
	
	Let $\mathrm{Lin}(\Gamma)$ be the set of non-zero $y$ coordinates at which $\Gamma$ has more than one vertex. Let $\mathrm{Pt}(\Gamma)$ be the set of $y$ coordinates at which $\Gamma$ has one vertex.
	
	\begin{lemma} The torus fixed locus in the logarithmic stratum $O(\Gamma, \{p_i\},w)$ associated to tropical data $(\Gamma, \{p_i\},w)$ is isomorphic to a product of moduli spaces:
		$$O(\Gamma, \{p_i\},w)=\prod_{y\in \mathrm{Lin}(\Gamma)} E_{(\Gamma^\star_y,\{p_i\},w^\star_y)} \times \prod_{y\in \mathrm{Pt}(\Gamma)} H_{1,w(p_i)}$$
		In the final product $p_i$ is the unique point with vertical coordinate $y$.
	\end{lemma}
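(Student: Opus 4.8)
The plan is to combine the modular description of $\mathcal{PT}_n(\mathbb{P}^1\times\mathbb{P}^1,(1,d))$ from Lemma~\ref{lem:PTinverse} with the torus-fixed criterion of Proposition~\ref{thm:Cact}, and then to cut the expansion $X_\Gamma$ into slices indexed by the vertical coordinate $y$. First I would recall that a closed point of the stratum is a strongly transverse and stable pair $[s:\mathcal{O}_{X_\Gamma}\to F]$ whose kernel $\mathsf{ker}(s)$ is a strongly transverse and stable curve $C\hookrightarrow X_\Gamma$ and whose cokernel has length $w(p_i)$ supported at each marked point. Since we restrict to the torus-fixed locus, Proposition~\ref{thm:Cact} ensures that no node of $\Gamma_\varphi$, and hence no marked point, meets the extended main component. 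Consequently the restriction of the pair to the main component of $X_\Gamma$ is rigid under $(\mathbb{C}^\star)^2$, and all moduli are concentrated on the off-main components.

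Next I would use the shape of a tropical curve dual to the $1\times d$ rectangle: because the horizontal degree is one, the off-main part of $\Gamma$ at each vertical coordinate $y$ is either a single vertex, giving the heights in $\mathsf{Pt}(\Gamma)$, or a horizontal chain of vertices, giving the heights in $\mathsf{Lin}(\Gamma)$. By Lemma~\ref{lem:toricvar} the expansion $X_\Gamma$ is assembled component by component from the polyhedral subdivision, so it decomposes into these slices, glued to one another and to the main component along the toric boundary divisors separating adjacent slices. The curve $C$ restricts to a strongly transverse and stable curve on each slice, and the length-$n$ subscheme distributes among the slices according to $w$. I would then argue that this restriction map is an isomorphism from $O(\Gamma,\{p_i\},w)$ onto the product over $y$.

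To identify the factors I would match the slice at $y\in\mathsf{Lin}(\Gamma)$ with the building block $E_{(\Gamma^\star_y,\{p_i\},w^\star_y)}$, which was constructed in Section~\ref{sec:curvemoduli} and Lemma~\ref{lem:buildingblock} precisely to model the local picture of Figure~\ref{fig:buildingblock} with the horizontal edge weights $i,j$ recorded at $v_0$; the slice at $y\in\mathsf{Pt}(\Gamma)$ consists of a single $\mathbb{C}^\star$-component carrying one weighted marked point, whose rubber Hilbert scheme is by definition $H_{1,w(p_i)}$. Finally I would check that the torus of automorphisms of $X_\Gamma$ over $X$ by which one quotients factors as a product over the slices, agreeing with the free quotients already built into $E$ and $H$, so that the product decomposition descends to the quotient stacks.

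The main obstacle is proving that the restriction-to-slices map is genuinely an isomorphism, that is, that the data at distinct heights are independent and that the gluing along the divisors separating adjacent slices imposes no further moduli and creates no identifications. This is where the torus-fixed hypothesis and strong transversality (Lemma~\ref{lem:strongtransstab}) do the work: the curve $C$ meets each such divisor in finitely many points, and on the torus-fixed locus these intersection points are pinned to the torus-fixed points of the boundary $\mathbb{P}^1$'s, so the gluing is rigid on both sides and the matching multiplicities are fixed by the combinatorics of $\Gamma$ rather than by moduli. Verifying this rigidity uniformly across the stratum, and confirming that the free torus quotient factors compatibly, is the technical heart; once established, the additivity and multiplicativity recorded in Lemma~\ref{lem:buildingblock} yield the asserted product.
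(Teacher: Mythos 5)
Your slicing-by-height strategy is the same as the paper's, but the argument you offer at what you correctly identify as the technical heart is wrong, and it is wrong in a way that matters. You claim that on the torus-fixed locus the intersection points of the curve with the divisors separating adjacent slices are ``pinned to the torus-fixed points of the boundary $\mathbb{P}^1$'s,'' so that the gluing is rigid. This contradicts strong transversality: Lemma \ref{lem:strongtransstab} proves precisely that a strongly transverse and stable curve misses every codimension-two stratum, and the torus-fixed points of a separating $\mathbb{P}^1$ are codimension-two strata of the expansion. So the gluing points necessarily lie in the dense $\mathbb{C}^\star$ of each separating divisor and genuinely vary in moduli. Note also that torus-fixedness of a stratum (Proposition \ref{thm:Cact}) only means the pair is invariant up to automorphisms of $X_\Gamma$ over $X$ --- the rubber torus absorbs the $(\mathbb{C}^\star)^2$-action on the off-main components --- which is exactly why the factors $T^{i,j}$, $E^\alpha_{(\Gamma^\star_y,\{p_i\},w^\star_y)}$ and $H_{k,n}$ are nontrivial moduli spaces rather than points; a literal rigidity of the boundary configuration would be inconsistent with the very statement you are proving.

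The correct mechanism, and the paper's proof, is a quotient argument rather than a rigidity argument. A priori the product over heights must be taken relative to a base $B$, the moduli of subschemes on the intersections of components at different heights (the configurations of gluing points on the separating divisors). The paper observes that since one works up to the automorphism group of $X_\Gamma$ over $X$, this base is $\mathsf{Spec}(\mathbb{C})$: given the data at heights $y_1$ and $y_2$ there is a unique way to glue them \emph{up to the rubber action}. Concretely, because the class $(1,d)$ has horizontal degree one, the curve meets each separating divisor in a single point of its dense torus, and the rubber $\mathbb{C}^\star$ attached to that divisor acts simply transitively on such one-point configurations, so the gluing moduli are exactly cancelled by the quotient. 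Your last step --- checking that the rubber torus factors as a product over slices compatibly with the quotients already built into $E$ and $H$ --- is the right thing to verify, but it must be deployed to kill the gluing moduli, not alongside a rigidity claim that strong transversality rules out.
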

	\begin{proof}
		We say an irreducible component of $X_\Gamma$ is at height $y$ if it arises from a vertex with vertical coordinate $y$. Note $O(\Gamma, \{p_i\},w)$  is a moduli space of subschemes in $X_{\Gamma'}$. Consequently $O(\Gamma, \{p_i\},w)$ is a product over $y$ of moduli spaces of subschemes of components of $X_{\Gamma'}$ of height $y$. A priori one should take a product over a base $B$ which is the moduli space of subschemes of the intersection of components at various heights.  Since we are working up to the automorphism group of $X_\Gamma$ over $X$ we find $B =\mathrm{Spec}(\mathbb{C})$. This is the statement that given a subscheme at height $y_1$ and height $y_2$ there is a unique (up to rubber action) way of gluing these subschemes together. 
	\end{proof}
	\subsection{A formula}
	We now summarise our results from this section. 
	
	\begin{theorem}\label{thm:EulerChar}
		The Euler--Satake characteristic of $\PTnXbeta(\mathbb{P}^1\times \mathbb{P}^1,(1,d))$ is a weighted count of cones in the fan $|\Delta|$. The weight assigned to cone $\sigma$ maximal among cones with associated pre-expansion tropical curve $\Gamma$ is 		$$U_{n,d}(\Gamma)=\sum_{(\Gamma,\{p_i\},w)}\prod_{y_i\in \mathrm{Pt}(\Gamma')}h_{1,w(p_i)}\prod_{y\in \mathrm{Lin}(\Gamma')}\sum_{\alpha\in \Gamma(y)} \left( \chi(E^\alpha_{(\Gamma^\star_y,\{p_i\},w^\star_y)}) \right).$$
		The Euler Characteristic of $E^\alpha_{(\Gamma^\star_y,\{p_i\},w^\star_y)}$ is computed from Lemma \ref{lem:buildingblock}. We define terms in this equation.
		\begin{enumerate}[(1)]
			\item The first sum is over ways of decorating $\Gamma$ with up to $n$ marked points $\{p_i\}$ disjoint from the extended main component, each with positive integer weight $w(p_i)$ such that $$\sum_i w(p_i)=n.$$
			\item We define integers $c_n$ by requiring they satisfy the recursion
			$$c_n = (-1)^{n-1} - \sum_{k=2}^{n-1}\stirling{n-1}{k-1} c_k.$$
			\item The rational numbers $h_{k,n}$ may be computed through the following formula
			$$h_{k,n} = \sum_{\underline{\ell}\in \mathrm{Part}(k)}\frac{c_{|\underline{\ell}|}}{N(\underline{\ell})} \prod_{\ell_j \in \underline{\ell}} P_n(\ell_j).$$
			The notation in this formula is explained in Section \ref{sec:thickenings}.
		\end{enumerate}
		Weight zero is assigned to all other cones.
	\end{theorem}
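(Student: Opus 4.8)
The plan is to reduce the computation to the fixed locus of the $(\mathbb{C}^\star)^2$ action, stratify that locus logarithmically, and then evaluate each stratum using the product decomposition already established. The theorem is really an assembly of the results of the preceding subsections, so the proof will consist of three nested reductions held together by additivity and multiplicativity of the Euler--Satake characteristic.

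First I would invoke Lemma \ref{lem:algtop2}: by Proposition \ref{thm:Cact} the stack $\mathcal{PT}_n(\mathbb{P}^1\times\mathbb{P}^1,(1,d))$ carries a $(\mathbb{C}^\star)^2$ action, so its Euler--Satake characteristic equals that of the fixed locus. By Proposition \ref{thm:Cact} together with the fixed-point analysis preceding the theorem, this fixed locus is a disjoint union of logarithmic strata $O(\Gamma,\{p_i\},w)$ indexed by triples in which the nodes of $\Gamma$ and the marked points $\{p_i\}$ are disjoint from the extended main component. Applying additivity (Lemma \ref{lem:algtop1}) across this finite stratification reduces the problem to computing $\chi\bigl(O(\Gamma,\{p_i\},w)\bigr)$, and grouping the strata according to the cone $\sigma$ they refine produces the outer weighted count of cones together with the inner sum over $(\Gamma,\{p_i\},w)$.

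Next I would apply the product decomposition of the lemma immediately preceding this subsection, which gives
$$O(\Gamma,\{p_i\},w)=\prod_{y\in\mathsf{Lin}(\Gamma')}E_{(\Gamma^\star_y,\{p_i\},w^\star_y)}\times\prod_{y\in\mathsf{Pt}(\Gamma')}H_{1,w(p_i)}.$$
By multiplicativity this factors into a product of $\chi\bigl(E_{(\Gamma^\star_y,\ldots)}\bigr)$ and $\chi(H_{1,w(p_i)})=h_{1,w(p_i)}$ terms. For each $\mathsf{Pt}(\Gamma')$ factor the value $h_{1,w(p_i)}$ is read off from the rubber Hilbert scheme computation of Section \ref{sec:thickenings}, which simultaneously supplies the recursion for $c_n$ and the formula for $h_{k,n}$ recorded in items (2) and (3). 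For each $\mathsf{Lin}(\Gamma')$ factor I would stratify $E_{(\Gamma^\star_y,\{p_i\},w^\star_y)}$ by the topological type $\alpha$ of the curve---the pullback of the stratification $T^{i,j}_\alpha$ of Section \ref{sec:curvemoduli}---and apply additivity once more, so that $\chi\bigl(E_{(\Gamma^\star_y,\ldots)}\bigr)=\sum_{\alpha\in\Gamma(y)}\chi\bigl(E^\alpha_{(\Gamma^\star_y,\ldots)}\bigr)$, with each summand furnished by Lemma \ref{lem:buildingblock}. Collecting these identities reproduces the stated formula exactly.

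The main obstacle is bookkeeping rather than any fresh geometric input: one must verify that the three reductions---to the fixed locus, to individual fixed strata, and to topological types---are mutually compatible and that the product decomposition respects the $\alpha$-stratification, so that the nested sums and products are indexed without double counting or omission. In particular care is needed in distinguishing the $y$ coordinates at which $\Gamma'$ has a single vertex, contributing the $\mathsf{Pt}(\Gamma')$ factors, from those with more than one vertex, contributing the $\mathsf{Lin}(\Gamma')$ factors; checking that the marked points and weights distribute correctly across these two regimes, and that the base over which the product decomposition is taken is genuinely a point (the uniqueness of gluing up to rubber), is where the argument must be pinned down.
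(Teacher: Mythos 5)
Your proposal is correct and follows the paper's route exactly: the paper states this theorem as a summary of Section \ref{sec:algorithm}, and its implicit proof is precisely your assembly---reduction to the fixed locus via Lemma \ref{lem:algtop2} and Proposition \ref{thm:Cact}, additive stratification into the torus-fixed logarithmic strata $O(\Gamma,\{p_i\},w)$ grouped by cones, the product decomposition lemma with its $\mathsf{Lin}(\Gamma')$ and $\mathsf{Pt}(\Gamma')$ factors (base a point by uniqueness of gluing up to rubber), and finally the $\alpha$-stratification evaluated by Lemma \ref{lem:buildingblock} together with the $c_n$ recursion and $h_{k,n}$ formula from Section \ref{sec:thickenings}. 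The bookkeeping concerns you flag are the same ones the paper resolves in the lemma immediately preceding the theorem, so nothing is missing.
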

	\bibliographystyle{alpha}
	\bibliography{Bibliography2}
	
\end{document}